\renewcommand{\baselinestretch}{1.3}
\DeclareMathOperator{\aff}{aff}
\DeclareMathOperator{\spn}{span}
\DeclareMathOperator{\row}{row}
\DeclareMathOperator{\rec}{rec}
\DeclareMathOperator{\lin}{lin}
\DeclareMathOperator{\cone}{cone}
\newcommand{\1}{\mathbf{1}}
\newcommand{\0}{\mathbf{0}}
\newcommand{\zP}{\mathcal P}
\newcommand{\zC}{\mathcal C}
\newcommand{\cL}{\mathbb L}
\newcommand{\cS}{\mathcal S}
\newcommand{\R}{\mathbb R}
\newcommand{\Z}{\mathbb Z}
\newcommand{\ii}{\hat{\i}}
\newtheorem{theorem}{Theorem}[section]
\newtheorem{CO}[theorem]{Corollary}
\newtheorem{PR}[theorem]{Proposition}
\newtheorem{RE}[theorem]{Remark}
\newtheorem{example}[theorem]{Example}
\newtheorem*{QU*}{Questions}
\newcounter{claim_nb}[theorem]
\newtheorem{claim}[claim_nb]{Claim}
\newtheorem*{claim*}{Claim}
\newcounter{claim_nbs}[section]
\newcounter{subclaim_nb}[claim_nbs]
\newenvironment{cproof}
{\begin{proof}
 [Proof of Claim.]
 \vspace{-1.2\parsep}}
{ \end{proof}}
\newtheoremstyle{named}{}{}{\itshape}{}{\bfseries}{.}{.5em}{\thmnote{#3 }#1}
\theoremstyle{named}
\title{Generalizations of Total Dual Integrality\thanks{This research was supported in part by Discovery
Grants from NSERC.}}
\author{
Bertrand Guenin
\thanks{Department of Combinatorics and Optimization, University of Waterloo, email: bguenin@uwaterloo.ca}
\and
Levent Tun\c{c}el
\thanks{Department of Combinatorics and Optimization, University of Waterloo, email: levent.tuncel@uwaterloo.ca}}
\begin{document}

\maketitle

\begin{abstract}
We design new tools to study variants of Total Dual Integrality.
As an application, we obtain a geometric characterization of Total Dual Integrality for the case where the associated polyhedron is non-degenerate.
We also give sufficient conditions for a system to be Totally Dual Dyadic, and prove new special cases of Seymour's Dyadic conjecture on ideal clutters. 
\end{abstract}

{\bf Keywords:} Integrality of polyhedra, dual integrality, TDI systems, dyadic rationals, dense subsets, ideal clutters.

\section{Introduction}\label{sec-intro}
Consider a system of linear inequalities $Mx\leq b$ where $M,b$ are integral and $M$ has $m$ rows and $n$ columns.
For a vector $w\in\R^n$ define the following primal-dual pair of linear programs,
\begin{align*}
& \max\{w^\top x: Mx\leq b\} \tag{$P:M,b,w$}, \\
& \min\{b^\top y: M^\top y=w, y\geq\0\} \tag{$D:M,b,w$}.
\end{align*}
Recall that from Strong Duality, both or none of these linear programs have an optimal solution.
We say that $w$ is {\em admissible} in the former case.
Let $\cL$ denote a subset of the reals $\R$.
We say that $Mx\leq b$ is {\em totally dual in $\cL$} (abbreviated as TD in $\cL$) if for every $w\in\cL^n$ for which $w$ is admissible, $(D\!:\!M,b,w)$ has an optimal solution in $\cL^m$.
When $\cL=\Z$ this corresponds to totally dual integral (TDI) systems.
We say that $\cL\subseteq\R$ is {\em heavy} if it is a dense subset of $\R$ that contains all integers and $(\cL,+)$ forms a subgroup of the additive group of real numbers.
Our key result, \Cref{main-char}, yields a characterization of systems that are totally dual in $\cL$ when $\cL$ is heavy.
To motivate this result, we present in the introduction applications, and highlights their relevance to the study of TDI systems and to a long standing conjecture of Paul Seymour on ideal clutters.
\subsection{Totally dual in $\cL$ systems}\label{sec-tdl}
Consider a prime $p$.
A rational number is {\em $p$-adic} if it is of the form $\frac{r}{s}$ where $r,s\in\Z$ and $s$ is an integer power of $p$  \cite{abdi23b}.
A number is {\em dyadic} if it is $2$-adic.
Let $\cS$ be a (possibly infinite) set of primes.
We denote by $\cL(\cS)$ the set of all rationals of the form $\frac{r}{s}$ where $r,s\in\Z$ and $s$ is a product of integer powers of primes in $\cS$.
Observe that $\cL({\{p\}})$ denotes the $p$-adic rationals. 
To keep the notation light, we write $\cL_p$ for $\cL({\{p\}})$.
For any set of primes $\cS$, $\cL(\cS)$ is a heavy set (this follows from \cite[Lemma 2.1]{abdi23}).

For a system $Mx\leq b$ to be TD in $\cL$ we consider the dual $(D\!:\!M,b,w)$ for all choices of $w\in\cL^n$.
However, for the aforementioned heavy sets, it suffices to consider the choices $w\in\Z^n$. Namely,
\begin{RE}\label{restricted-w}
Let $\cL:=\cL(\cS)$ where $\cS$ is a set of primes.
Then $Mx\leq b$ is TD in $\cL$ if and only if for every admissible $w\in\Z^n$, $(D\!:\!M,b,w)$ has an optimal solution in $\cL^m$.
\end{RE}
\begin{proof}
Necessity is clear, let us prove sufficiency.
Consider an admissible $w\in\cL^n$.
For some $\mu$ that is a product of integer powers of primes in $\cS$ we have $\mu w\in\Z^n$.
There exists an optimal solution $\bar{y}\in\cL^n$ for $(D\!:\!M,b,\mu w)$.
However, then $\frac1\mu\bar{y}$ is a solution for $(D\!:\!M,b,w)$ that is in $\cL^n$.
\end{proof}
For any prime $p$, we can find a system $Mx\leq b$ that is TD in $\cL_p$ but not TD in $\cL_q$ for any prime $q\neq p$.
Namely, one can pick $Mx\leq b$ to consist of a unique constraint, $px\leq 1$. 
However, if we require a system to be TD in $\cL_p$ and TD in $\cL_{p'}$ for distinct primes $p$ and $p'$, 
then it is totally dual in $\cL_q$ for every prime $q$ \cite[Theorem 1.4]{abdi23b}.
The following stronger statement (the aforementioned case corresponds to $\cS_1=\{p\}, \cS_2=\{p'\}$ and $k=2$) holds in the full dimensional case,
\begin{theorem}\label{pq-to-L}
Let $Mx\leq b$ be a system where $\{x:Mx\leq b\}$ is a full-dimensional polyhedron.
For $i=1,\ldots,k$, let $\cS_i$ be a set of primes and suppose that $Mx\leq b$ is TD in $\cL(\cS_i)$.
If $\cap_{i\in[k]}\cS_i=\emptyset$ then $Mx\leq b$ is TD in $\cL_q$ for every prime $q$.
\end{theorem}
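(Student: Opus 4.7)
My plan is to combine the $k$ given optimal dual solutions into a single convex combination whose coefficients are chosen to land in $\cL_q$. First, by \Cref{restricted-w} it suffices to fix an arbitrary admissible $w \in \Z^n$ and exhibit an optimal solution of $(D\!:\!M,b,w)$ lying in $\cL_q^m$. For each $i \in [k]$, the hypothesis that $Mx \le b$ is TD in $\cL(\cS_i)$ provides such an optimum $y^{(i)} \in \cL(\cS_i)^m$. All of these lie in the convex dual-optimal polyhedron $Y = \{y \ge \0 : M^\top y = w,\; b^\top y = v^*\}$, where $v^*$ denotes the common optimal value of $(P\!:\!M,b,w)$ and $(D\!:\!M,b,w)$.

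Next, let $N_i$ be the least common multiple of the denominators of the entries of $y^{(i)}$, so $N_i y^{(i)} \in \Z^m$ and every prime factor of $N_i$ lies in $\cS_i$. Since $\cap_{i\in[k]} \cS_i = \emptyset$, no single prime divides every $N_i$, hence $\gcd(N_1, \ldots, N_k) = 1$. By the Sylvester--Frobenius theorem on numerical semigroups, for every sufficiently large integer $a \ge 0$ there exist $d_1, \ldots, d_k \in \Z_{\ge 0}$ with $\sum_{i=1}^{k} N_i d_i = q^a$.

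Setting $\mu_i := N_i d_i / q^a \ge 0$ gives $\sum_i \mu_i = 1$, so $y := \sum_i \mu_i y^{(i)} \in Y$ is dual-optimal by convexity of $Y$. Rewriting $y = q^{-a} \sum_i d_i \cdot (N_i y^{(i)})$ with each summand in $\Z^m$ shows $y \in \cL_q^m$, as required. The main caveat I would like to double-check is that this construction does not visibly invoke full-dimensionality of $\{x : Mx \le b\}$; presumably the authors' intended proof routes through \Cref{main-char} and uses the hypothesis there, so the hardest part of the plan is really to verify that the direct convex-combination argument above is not silently relying on some property (e.g.\ pointedness or finite-vertex structure of the appropriate face) that could fail without full-dimensionality. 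If that check goes through, the Sylvester--Frobenius step is routine; if not, the fix would be to localize the argument to a specific optimal vertex, where full-dimensionality ensures the tight-constraint matrix has rank $n$ and the combinatorics of the dual optimal face behave as expected.
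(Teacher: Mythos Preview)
Your argument is correct and in fact proves more than the theorem as stated: the full-dimensionality hypothesis is never used, and you are right to be puzzled by its absence. Nothing in your convex-combination step depends on the geometry of the primal polyhedron beyond the convexity of the dual optimal face. So your caveat resolves in your favour.

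The paper takes a genuinely different route. It invokes \Cref{main-char}: full-dimensionality kills condition~(i) there, and for condition~(ii) one fixes an arbitrary $(w,F,F^+)$-tilt constraint $\alpha^\top u = 1$, pulls a solution $u^i \in \cL(\cS_i)$ out of each hypothesis, scales each $u^i$ by an integer $\mu_i$ (a product of primes in $\cS_i$) to make it integral, and then uses B\'ezout on the coprime $\mu_1,\dots,\mu_k$ to obtain an \emph{integral} solution $\bar u = \sum_i s_i\mu_i u^i$. This is an affine combination with possibly negative coefficients, which is fine because the tilt constraint is a single linear equation with no sign constraints. The authors themselves remark afterwards that the same B\'ezout-on-the-affine-hull idea could be applied directly to the optimal dual solutions, but then one would land in the affine hull of the dual optimal face rather than the face itself, and a separate appeal to the density result \Cref{density} is needed to pull the point back inside.

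Your proof sidesteps both \Cref{main-char} and \Cref{density}: by replacing B\'ezout with the numerical-semigroup (Frobenius) fact you get \emph{nonnegative} coefficients summing to $q^a$, hence a genuine convex combination that stays inside the dual optimal face automatically. That is what lets you drop full-dimensionality. The trade-off is that the paper's route showcases \Cref{main-char} as advertised, and their tilt-constraint argument actually yields an integral solution to each tilt constraint (a slightly sharper local conclusion), whereas yours goes straight to the $q$-adic dual optimum. One small naming quibble: the two-generator case is Sylvester's theorem; for $k\ge 3$ coprime generators the finiteness of the Frobenius number is the relevant (and equally standard) fact.
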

\noindent
This will be an immediate consequence of \Cref{main-char}.

Consider an integral matrix $M$ and an integral vector $b$.
A necessary condition for $Mx\leq b$ to be TDI is that the polyhedron $Q=\{x\in\R^n:Mx\leq b\}$ be integral, 
i.e., that every minimal proper face of $Q$ contains an integral vector \cite{Edmonds77}, see also \cite[Corollary 22.1a]{schrijver86}.
However, this is not a sufficient condition \cite[Equation (3) in Chapter 22]{schrijver86}.
Let us define a stronger necessary condition for a system to be TDI.
We say that $Mx\leq b$ is {\em near-TDI} if for every prime $p$, $Mx\leq b$ is TD in $\cL_p$.
Since $\Z\subset\cL_p$ for every prime $p$, it then follows from \Cref{restricted-w} that if a system is TDI, it is near-TDI.

Furthermore, we have the following result \cite[Theorem 1.5]{abdi23},
\begin{PR}\label{pq-to-integral}
Consider $M\in\Z^{m\times n}$ and $b\in\Z^m$.
If $Mx\leq b$ is TD in $\cL_p$ and TD in $\cL_q$ for distinct primes $p,q$ then the polyhedron $\{x:Mx\leq b\}$ is integral.
\end{PR}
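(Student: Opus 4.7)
The plan is to invoke the following well-known characterization of integral polyhedra (Edmonds--Giles, see Schrijver~\cite{schrijver86}): a rational polyhedron $Q=\{x:Mx\leq b\}$ is integral if and only if for every $w\in\Z^n$ admissible for $(P\!:\!M,b,w)$, the optimal value $\max\{w^\top x:Mx\leq b\}$ is an integer. So it suffices to show that admissible integer objectives yield integer LP values.

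So fix an arbitrary admissible $w\in\Z^n$. By \Cref{restricted-w} (or directly from the definition), since $Mx\leq b$ is TD in $\cL_p$, the dual $(D\!:\!M,b,w)$ has an optimal solution $y_p\in\cL_p^m$; similarly, there is an optimal solution $y_q\in\cL_q^m$. Let $v$ denote the common optimal value of the primal and dual. By strong LP duality,
\[
v \;=\; b^\top y_p \;=\; b^\top y_q.
\]
Since $b\in\Z^m$ and $y_p$ has $p$-adic entries, $b^\top y_p\in\cL_p$; analogously $b^\top y_q\in\cL_q$. Hence $v\in\cL_p\cap\cL_q$.

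The final step is to observe that $\cL_p\cap\cL_q=\Z$ whenever $p$ and $q$ are distinct primes. Indeed, writing $v=r/s$ in lowest terms, $v\in\cL_p$ forces $s$ to be a power of $p$, while $v\in\cL_q$ forces $s$ to be a power of $q$; since $\gcd(p,q)=1$, we must have $s=1$ and $v\in\Z$. Combined with the Edmonds--Giles criterion applied to the arbitrary admissible $w\in\Z^n$, this yields integrality of $\{x:Mx\leq b\}$.

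I do not foresee a real obstacle: the entire argument rests on the coincidence that the only rationals that are simultaneously $p$-adic and $q$-adic (for distinct primes $p,q$) are the integers, combined with strong duality; the main step to verify is that the Edmonds--Giles characterization is indeed available in this one-sided form, which is standard.
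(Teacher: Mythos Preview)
Your argument is correct. It is the natural adaptation of the classical Edmonds--Giles argument that a TDI system with integral right-hand side defines an integral polyhedron: you replace ``integral dual optimum'' by ``$p$-adic dual optimum'' and ``$q$-adic dual optimum'', deduce that the optimal value lies in $\cL_p\cap\cL_q=\Z$, and then invoke the characterization of integrality via integer optimal values. Each step is sound; in particular, the Edmonds--Giles criterion in the one-sided form you use is exactly \cite[Corollary 22.1a]{schrijver86}, which the paper itself cites.

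As for comparison: the paper does not give its own proof of \Cref{pq-to-integral}; it quotes the result from \cite[Theorem 1.5]{abdi23}. Your self-contained proof is precisely the intended one and matches the spirit of the original reference.
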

\noindent
\Cref{main-char} characterizes when a system $Mx\leq b$ is TD in $\cL_p$ for some prime $p$. The proof leverages the density of $\cL_p$.
This suggests, unexpectedly, that density arguments may sometimes be very useful in certifying integrality of polyhedra via \Cref{pq-to-integral}.

It follows from \Cref{pq-to-integral}  that if $Mx\leq b$ is near-TDI then $\{x:Mx\leq b\}$ is integral.
Therefore, for $M,b$ integer, if $Mx\leq b$ is TDI then it is near-TDI, and if $Mx\leq b$ is near-TDI, then $\{x:Mx\leq b\}$ is integral.
\begin{figure}[htb!]
\centering\includegraphics[scale=0.65]{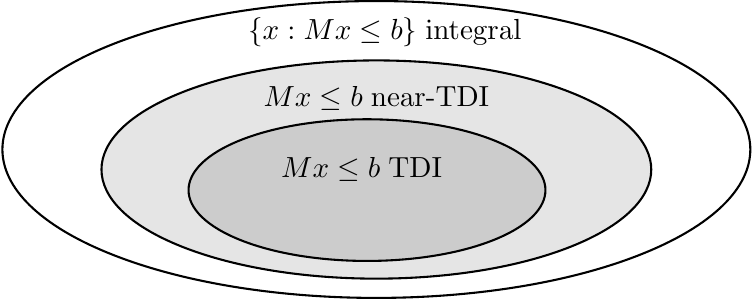}
\caption{Hierarchy of linear systems of inequalites.}\label{fig-wenn}
\end{figure}

A system $Mx\leq b$ is {\em non-degenerate} if for every minimal proper face $F$ of $\{x:Mx\leq b\}$
the left-hand sides of the tight constraints defining $F$, are linearly independent with the caveat 
that if we have two tight constraints of the form $\alpha^\top x\leq\beta$ and $-\alpha^\top x\leq-\beta$ we only consider one of these as defining $F$.
For non-degenerate systems, the TDI and near-TDI properties coincide. 
Namely,
\begin{RE}\label{non-deg-near}
Every non-degenerate near-TDI system is TDI.
\end{RE}
\begin{proof}
Let $Mx\leq b$ be non-degenerate and let $w\in\Z^n$ be admissible weights.
Let $F$ denote the set of optimal solutions to $(P\!:\!M,b,w)$.
Let $I$ be the set of constraints that defines the face $F$.
Let $\bar{y}$ be an optimal solution to $(D\!:\!M,b,w)$.
By complementary slackness, $\bar{y}_i=0$ for all $i\notin I$.
Since $Mx\leq b$ is non-degenerate, $\bar{y}$ is the unique solution to $M^\top y=w$ and $y_i=0$ for all $i\notin I$.
As $Mx\leq b$ is near-TDI, it is TD in $\cL_p$ and TD in $\cL_q$ for distinct primes $p$ and $q$.
It follows that $\bar{y}\in\cL_p^m\cap \cL_q^m=\Z^m$.
Hence, $(D\!:\!M,b,w)$ has an integer optimal solution.
\end{proof}
\noindent
In the next section, we give a geometric characterization of non-degenerate TDI systems.
\subsection{A geometric characterization of non-degenerate TDI systems}
Consider a matrix $M\in\Z^{m\times n}$ and a vector $b\in\Z^m$.
The system $Mx\leq b$ is {\em resilient} if $Q:=\{x \in \R^n :Mx\leq b\}$ is integral and for every $i\in[m] :=\{1,2, \ldots,m\}$, 
\[
Q\cap\{x \in \R^n : \row_i(M)x\leq b_i-1\},
\]
is also an integral polyhedron. 
To illustrate this property consider, 
\begin{align}
\left(
\begin{array}{rr}
1 & 1 \\
-1 & 0\\
0 & -1
\end{array}
\right)
x
&
\leq
\begin{pmatrix}
3 \\ 0 \\ 0
\end{pmatrix},\quad\mbox{and}\quad
\label{resilient-ex1} \\
\left(
\begin{array}{rr}
3 & 1 \\
0 & 1\\
-1 & 0\\
0 & -1
\end{array}
\right)
x
&
\leq
\begin{pmatrix}
6 \\ 3 \\ 0 \\ 0
\end{pmatrix}.
\label{resilient-ex2}
\end{align}
Denote by $Q$ the polyhedron defined by \eqref{resilient-ex1} and by $Q'$ the polyhedron defined by \eqref{resilient-ex2}, see \Cref{fig-resilient}.
Both $Q$ and $Q'$ are integral; however, $Q$ is resilient, but $Q'$ is not.
\begin{figure}[htb!]
\vspace{-0.1in}
\centering\includegraphics[scale=0.35]{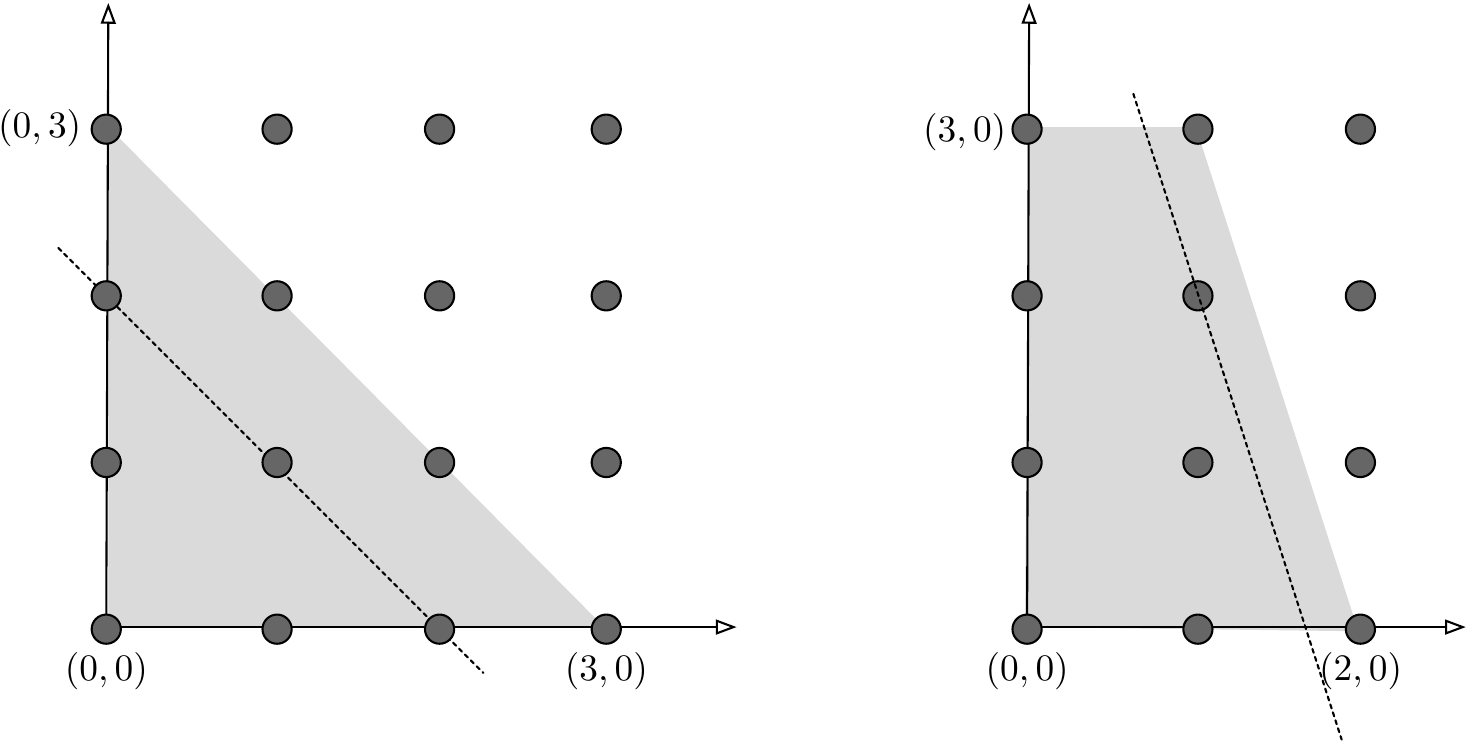}
\caption{Left $Q$ and a shifted hyperplane, right $Q'$ and a shifted hyperplane.}\label{fig-resilient}
\end{figure}

We prove the following result (see \Cref{gen-tdi} in \S\ref{sec-tdi}),
\begin{theorem}\label{intro-tdi}
Consider $M\in\Z^{m\times n}$ and $b\in\Z^m$.
Suppose that $\{x \in \R^n : Mx\leq b\}$ is full-dimensional and $Mx\leq b$ is non-degenerate.
Then $Mx\leq b$ is TDI if and only if $Mx\leq b$ is resilient.
\end{theorem}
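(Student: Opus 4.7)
The plan is to establish a common local criterion: in the non-degenerate full-dimensional setting, $Mx\leq b$ is TDI (resp.\ resilient) if and only if at every vertex $v$ of $Q:=\{x\in\R^n:Mx\leq b\}$ the $n\times n$ matrix $A_v$ of rows of $M$ tight at $v$ is unimodular, i.e.\ $|\det A_v|=1$.  The equivalence TDI $\Leftrightarrow$ unimodularity at each vertex is the vertex case of the standard Hilbert-basis characterization: for any admissible $w\in\Z^n$, full-dimensionality gives a vertex optimum $v$, non-degeneracy makes $A_v$ nonsingular, and complementary slackness forces the dual optimum to equal $A_v^{-\top}w$ on the tight index set and zero elsewhere.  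Hence an integer dual optimum exists for every admissible integer $w$ iff $A_v^{-\top}$ is integer (varying $w$ over a full-dimensional integer subset of the normal cone at $v$), iff $A_v$ is unimodular.

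\emph{Unimodular vertices $\Rightarrow$ resilient.}  Unimodularity already gives $Q$ integer.  A vertex $v'$ of $Q_i:=Q\cap\{\row_i(M)x\leq b_i-1\}$ is either a vertex of $Q$ (hence integer) or it has $n-1$ tight rows $J\subseteq[m]$ of $M$ plus the shifted constraint, so it sits on an edge $F=[v_0,v_1]$ of $Q$ with integer endpoints.  Parametrize $F$ by a primitive integer direction $d$.  If $\row_i(M)$ is not tight at either endpoint then $\row_i(M)v_0$ and $\row_i(M)v_1$ are integers strictly less than $b_i$, hence both $\leq b_i-1$, and the shifted hyperplane does not cut the relative interior of $F$.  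Otherwise $\row_i(M)$ is tight at one endpoint, say $v_1$; applying unimodularity of $A_{v_1}$ (which contains $\row_i(M)$) to the primitive integer direction $d$ forces $\row_i(M)d=1$, so $t^\ast = b_i-1-\row_i(M)v_0\in\Z$ and $v'=v_0+t^\ast d$ is integer.

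\emph{Resilient $\Rightarrow$ unimodular vertices.}  Fix a vertex $v$ of $Q$ with tight row indices $i_1,\ldots,i_n$, and for each $k$ set $d_k:=-A_v^{-1}e_k$; then $d_k$ keeps all tight rows active except $\row_{i_k}(M)$, which drops by exactly one along $d_k$.  Let $F_k$ be the edge of $Q$ leaving $v$ in direction $d_k$.  The crux is showing that $v+d_k$ lies on $F_k$ and hence is a vertex of $Q_{i_k}$: if $F_k$ is a ray this is immediate, while if $F_k=[v,v^{(k)}]$ is finite then $v^{(k)}$ is an integer vertex of $Q$ (since resilience implies $Q$ integer), so writing $d_k=q^{-1}\tilde d_k$ with $\tilde d_k$ primitive integer, $q\in\Z_{>0}$, and $v^{(k)}=v+\ell_k d_k$ yields $\ell_k\in q\Z_{>0}$ and hence $\ell_k\geq q\geq 1$.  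Therefore $v+d_k\in F_k$ is a vertex of $Q_{i_k}$; by resilience it is integer, so each $d_k\in\Z^n$, and $A_v^{-1}$ is integer, i.e.\ $A_v$ is unimodular.

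The main obstacle is the feasibility step in the third part: showing that the candidate $v+d_k$ genuinely sits on the edge $F_k$, so that it qualifies as a vertex of $Q_{i_k}$ to which resilience can be applied.  This is precisely where integrality of $Q$ (a consequence of resilience) is used to force unimodularity of $A_v^{-1}$.
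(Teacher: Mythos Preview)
Your local-unimodularity route is considerably more elementary than the paper's proof, which derives sufficiency by showing resilient $\Rightarrow$ near-TDI via the tilt-constraint characterization (\Cref{main-char}) and then invoking \Cref{non-deg-near}, and derives necessity through a ``mates'' argument on minimal faces (\Cref{mates}) combined with the $\Z$-GSS property. You bypass the density/heavy-set machinery entirely, and in the pointed case your argument is correct and pleasant.

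There is, however, a genuine gap: the hypotheses do not force $Q$ to be pointed. Your assertion that ``full-dimensionality gives a vertex optimum'' fails when $Q$ has a nontrivial lineality space, and nothing in ``full-dimensional $+$ non-degenerate'' (in the paper's sense) excludes this. For instance, the single constraint $2x_1\le 0$ in $\R^2$ is full-dimensional and non-degenerate, has no vertices, and is neither TDI nor resilient---yet your vertex criterion is vacuously satisfied, so both of your intermediate equivalences collapse and the argument as written establishes nothing in the non-pointed case. The repair is to work with minimal faces throughout, replacing ``$A_v$ unimodular'' by the condition that the tight rows at a minimal face generate the full integer lattice of their linear span (equivalently, can be completed to an $n\times n$ unimodular matrix); your three steps then go through with ``edge'' replaced by ``face of dimension $\dim(\lin Q)+1$''. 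A smaller issue: in the unimodular $\Rightarrow$ resilient step you write the relevant edge as a segment $[v_0,v_1]$, but it can be a ray; since only the endpoint at which constraint $i$ is tight is actually used, this is easily patched.
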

For $M\in\Z^{m\times n}$ and $b\in\Z^m$ we say that $Mx\leq b$ is a {\em faceted system} if 
(i) constraints in $Mx\leq b$ are irredundant (i.e. removing any constraint creates new solutions),
(ii) for any constraint, the greatest common divisor (gcd) of the entries on the LHS and RHS is one, and
(iii) $\{x\in\R^n:Mx\leq b\}$ is full-dimensional.
Recall that for a full-dimensional rational polyhedron $P$, there is, up to scaling, 
a unique integral system with irredundant constraints that describes $P$ \cite[Corollary 3.31]{CCZ2014}.
It follows that there is a bijection between faceted systems and full-dimensional rational polyhedra.
In particular, faceted systems can be viewed as geometric objects.
Consider an integral matrix $M$ and an integral vector $b$ where $Mx\leq b$ is TDI.
Suppose that for some constraint $i$, the gcd of the LHS and RHS is $\alpha\geq 2$.
Then, the system obtained from $Mx\leq b$ by dividing constraint $i$ by $\alpha$, is also TDI.
Hence, when studying TDI systems there is no loss of generality in assuming condition (ii) holds.
However, conditions (i) and (iii) are actual restrictions.

Next we give a geometric interpretation of resiliency for faceted systems.
\begin{PR}\label{geometric}
Let $Mx\leq b$ be a faceted system which defines an integral polyhedron $Q$.
Then $Mx\leq b$ is resilient if and only if for every facet $F$ of $Q$
the polyhedron $Q_F$ is integral where $Q_F$ is obtained from $Q$ by shifting-in the supporting hyperplane for $F$ to the next integer lattice point.
\end{PR}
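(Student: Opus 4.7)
The plan is to show that, under the faceted hypothesis, the two descriptions of the ``shifted'' polyhedra coincide row by row. Because condition (i) of faceted says the constraints in $Mx\leq b$ are irredundant and condition (iii) gives full-dimensionality of $Q$, standard polyhedral theory provides a bijection between rows $i\in[m]$ of $M$ and facets of $Q$. Let $F_i$ be the facet defined by row $i$, with supporting hyperplane $H_i=\{x:\row_i(M)\,x=b_i\}$. With this correspondence in hand, the proposition reduces to showing that for each $i$,
\[
Q_{F_i}\;=\;Q\cap\{x:\row_i(M)\,x\leq b_i-1\}.
\]

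The key step is to identify the ``next integer lattice point'' arithmetically. Write $\alpha:=\row_i(M)^\top$ and $\beta:=b_i$, and set $d:=\gcd(\alpha_1,\dots,\alpha_n)$. Then $\{\alpha^\top x : x\in\Z^n\}=d\Z$, so shifting $H_i$ inward until it meets an integer point of $\R^n$ means reducing $\beta$ to the largest element of $d\Z$ that is strictly less than $\beta$. I therefore need to show $d=1$, after which this largest element is $\beta-1$ and $Q_{F_i}$ is exactly the polyhedron above. To prove $d=1$, I combine two facts: (a) because $Q$ is integral, every face of $Q$ contains an integer point (the minimal faces do by definition, and every face contains a minimal face), so $F_i$ contains some $x^\ast\in\Z^n$ and hence $\beta=\alpha^\top x^\ast\in d\Z$; (b) the faceted condition (ii) gives $\gcd(\alpha_1,\dots,\alpha_n,\beta)=\gcd(d,\beta)=1$. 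From $d\mid\beta$ and $\gcd(d,\beta)=1$, one concludes $d=1$.

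Once these two pieces are in place, the proposition follows immediately: resiliency is by definition the integrality of $Q\cap\{x:\row_i(M)\,x\leq b_i-1\}$ for every $i\in[m]$, and by the equality above this is the integrality of $Q_{F_i}$ for every facet $F_i$ of $Q$. There is no serious obstacle to this proof; the only subtle point is recognizing that the faceted normalization together with integrality of $Q$ forces the gcd of each row of $M$ alone to be one, which is what makes the geometric ``next lattice point'' shift coincide with the arithmetic ``$b_i\mapsto b_i-1$'' shift used in the definition of resiliency.
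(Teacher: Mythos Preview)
Your proposal is correct and follows essentially the same approach as the paper's proof: both establish the row--facet bijection from irredundancy and full-dimensionality, then show that the gcd of each row of $M$ alone equals $1$ by combining integrality of $Q$ (which forces $d\mid b_i$) with the faceted normalization $\gcd(\row_i(M),b_i)=1$, and conclude that the ``next lattice hyperplane'' is at $b_i-1$. The paper packages the last step via Bezout's lemma and an explicit claim that $H'_i\cap\Z^n\neq\emptyset$, but the content is the same.
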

\noindent
Consider for instance the polyhedron $Q$ described by \eqref{resilient-ex1} and illustrated in \Cref{fig-resilient}.
Let $F$ be the face of $Q$ determined by the supporting hyperplane $H=\{x\in\R^2:x_1+x_2=3\}$.
Shifting-in $H$ (i.e. decreasing the RHS) until the hyperplane contains a new lattice point, yields $H'=\{x\in\R^2:x_1+x_2=2\}$.
Then $Q_F$ is the polyhedron described by \eqref{resilient-ex1} where we change $b_1=3$ to $b_1=2$.
Before we proceed with the proof, recall the following immediate consequence of Bezout's lemma,
\begin{RE}\label{easy} 
$\alpha^\top x=\beta$ where $\alpha\in\Z^n, \beta\in\Z$ has solution in $\Z^n$
if and only if $\gcd(\alpha_1,\ldots,\alpha_n) | \beta$.
\end{RE}
\begin{proof}[Proof of \Cref{geometric}]
Let $m,n$ denote the number of rows, respectively, columns of $M$.
There exists a bijection between constraints $i\in[m]$ of $Mx\leq b$ and facets $F_i$ of $Q$ 
where $F_i=Q\cap H_i$ and $H_i=\{x:\row_i(M)x=b_i\}$ is a supporting hyperplane of $Q$ \cite[Theorem 3.27]{CCZ2014}.

Pick $i\in[m]$ and let $H'_i=\{x:\row_i(M)x=b_i-1\}$.
\begin{claim*}
$H'_i\cap\Z^n\neq\emptyset$.
\end{claim*}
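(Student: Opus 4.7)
The plan is to apply Remark 2.6 (Bezout) to the equation defining $H'_i$: it suffices to show that $d := \gcd(\row_i(M)_1, \ldots, \row_i(M)_n)$ divides $b_i - 1$. Since $b_i - 1$ and $b_i$ are consecutive integers, any integer that divides $b_i$ and is coprime to $b_i$ must equal $\pm 1$, so in fact it is enough to show $d = 1$.

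To get $d\mid b_i$ I would exploit the hypothesis that $Q$ is integral. Because $F_i = Q \cap H_i$ is a nonempty face of the integral polyhedron $Q$, it contains some lattice point $\bar{x} \in \Z^n$. From $\bar x \in H_i$ we obtain $\row_i(M)\bar{x} = b_i$, and since $d$ divides every entry of $\row_i(M)$, it divides the integer combination $\row_i(M)\bar{x}$; hence $d \mid b_i$.

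Now I would invoke part (ii) of the definition of a faceted system, which says that the gcd of the full vector $(\row_i(M), b_i)$ equals $1$. In particular $\gcd(d, b_i) = 1$. Combined with $d \mid b_i$ from the previous paragraph, this forces $d = 1$. Applying Remark 2.6 to $\row_i(M)x = b_i - 1$ then produces the required integer point in $H'_i$.

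I do not anticipate a real obstacle: the argument is a direct chain Bezout $\Leftarrow$ $\gcd(\row_i(M)) = 1$ $\Leftarrow$ (integrality of $Q$) + (faceted condition (ii)). The only thing to be careful about is that the integer point $\bar x$ in $F_i$ really exists, which is the standard fact that every (nonempty) face of an integral polyhedron contains an integer point, guaranteed here because $F_i$ is a facet of the full-dimensional integral polyhedron $Q$.
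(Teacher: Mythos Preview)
Your proposal is correct and follows essentially the same route as the paper's own proof: use integrality of $Q$ to place an integer point on $F_i$ and deduce that $\mu:=\gcd(\row_i(M))$ divides $b_i$, then invoke the faceted condition (ii) to force $\mu=1$, and finally apply the Bezout remark to $\row_i(M)x=b_i-1$. The only cosmetic difference is that the paper applies the Bezout remark in both directions (first to extract $\mu\mid b_i$ from the existence of an integer solution to $\row_i(M)x=b_i$, then to produce an integer solution to $\row_i(M)x=b_i-1$), whereas you argue $\mu\mid b_i$ directly from divisibility; the substance is identical.
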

\begin{cproof}
Since $Q$ is integral, the face $F_i\subseteq H_i$ contains an integral point.
Hence, $\row_i(M)x=b_i$ has an solution in $\Z^n$.
\Cref{easy} implies that $\mu:=\gcd(M_{i1},\ldots,M_{in})|b_i$.
Since $Mx\leq b$ is faceted, we have $\mu=1$.
Therefore, by \Cref{easy} $\row_i(M)x=b_i-1$ has an integer solution.
\end{cproof}
\noindent
Since there is no integer between $b_i-1$ and $b_i$ we have $\{x:b_i-1<\row_i(M)x<b_i\}\cap\Z^n=\emptyset$.
This implies with the claim that $H'_i$ is obtained by shifting-in $H_i$ to the next integer lattice point.
\end{proof}
\Cref{non-deg-near} says that non-degenerate near-TDI systems are TDI.
Moreover, if in addition the system in faceted, then we have a geometric description of TDI via \Cref{intro-tdi} and \Cref{geometric}.
This motivates the following problems,
\begin{enumerate}[\;\;-]
\item Can we characterize when a near-TDI faceted system is TDI?
\item Can we find a geometric characterization of when a faceted system is TDI?
\end{enumerate}
\noindent
In closing, we will observe that each of the inclusions in \Cref{fig-wenn} are strict.
Consider the system $Mx\leq b$ described in \eqref{resilient-ex2}.
The associated polyhedron $\{x:Mx\leq b\}$ is integral.
However, $Mx\leq b$ is not resilient.
Therefore, by \Cref{intro-tdi}, $Mx\leq b$ is not TDI.
Since $Mx\leq b$ is non-degenerate, it then follows from \Cref{non-deg-near} that $Mx\leq b$ is not near-TDI either.
Consider now the system,
\begin{equation}\label{eq-w1}
\begin{pmatrix} 2\\3 \end{pmatrix}x\leq\begin{pmatrix}0 \\ 0\end{pmatrix},
\end{equation}
with the associated dual problem $(D\!:\!M,b,w)$,
\begin{equation}\label{eq-w2}
\min\{0:2y_1+3y_2=w, y_1,y_2\geq 0\}.
\end{equation}
$w$ admissible iff $w\geq 0$. If $w$ is admissible and not equal to zero, then $x=0$ is the unique optimal solution for $(P\!:\!M,b,w)$.
Observe that $\{2y_1+3y_2: y_1,y_2\in\Z_+\}=\Z\setminus\{1\}$.
This implies that \eqref{eq-w2} has an $p$-adic solution for every prime $p$ and every choice $w\in\Z_+$ but that \eqref{eq-w2} does not have an integral solution when $w=1$.
In particular, \eqref{eq-w1} is near-TDI but not TDI.
Note that \eqref{eq-w1} is not a faceted system, however.
\subsection{Totally dyadic systems}
A system is totally dual dyadic (abbreviated as TDD) if it is TD in $\cL(\{2\})$, i.e. for every integer admissible $w$, $(D\!:\!M,b,w)$ has an optimal solution that is dyadic.
We find a sufficient condition for a system of linear inequalities to be TDD by relaxing the resilient condition.
Namely, let us say that a system $Mx\leq b$ (where $M\in\Z^{m\times n},b\in\Z^m$) is {\em half-resilient} if $Q:=\{x \in \R^n : Mx\leq b\}$ is integral and for every $i\in[m]$, 
\[
Q\cap\{x \in \R^n : \row_i(M)x\leq b_i-1\} \quad\mbox{or}\quad Q\cap\{x \in \R^n : \row_i(M)x\leq b_i-2\}
\]
is also an integral polyhedron. Clearly, every resilient system is half-resilient. 

We prove the following result (see \Cref{sufficient-res-co} part (b)),
\begin{theorem}\label{intro-TDD}
Consider matrix $M\in\Z^{m\times n}$ and vector $b\in\Z^m$.
Suppose that $Q=\{x \in \R^n : Mx\leq b\}$ is full-dimensional. 
If $Mx\leq b$ is half-resilient then $Mx\leq b$ is TDD.
\end{theorem}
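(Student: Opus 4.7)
The plan is to reduce to integer weights and then extract a dyadic optimal dual by a one-step shift argument analogous to the integer shift used for \Cref{intro-tdi}. By \Cref{restricted-w} applied to $\cL=\cL(\{2\})$, it suffices to prove that for every admissible $w\in\Z^n$ the dual $(D\!:\!M,b,w)$ has an optimal solution in $\cL(\{2\})^m$. Fix such $w$. Since $Q$ is integral (which holds by half-resilience) and full-dimensional, the primal optimum $v^\ast$ is an integer attained at an integer vertex $\bar x$. The plan is to select an optimal dual $\bar y$ with inclusion-minimal support $B\subseteq\{i:\row_i(M)\bar x=b_i\}$; standard LP theory then forces $\{\row_i(M):i\in B\}$ to be linearly independent, with $\bar y_B$ the unique nonnegative solution of $M_B^\top y_B=w$.

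The claim to prove is that $\bar y_i$ is dyadic for every $i\in B$. For such $i$, half-resilience supplies $k_i\in\{1,2\}$ with $Q_i:=Q\cap\{\row_i(M)x\le b_i-k_i\}$ integral, and hence $v_i':=\max\{w^\top x:x\in Q_i\}$ is an integer. Letting $v(t)$ denote the optimum of the LP obtained from $(P\!:\!M,b,w)$ by replacing $b_i$ with $b_i-t$, $v$ is piecewise linear and concave in $t$ with right-derivative $-\bar y_i$ at $t=0$; concavity yields $v_i'=v(k_i)\le v^\ast-k_i\bar y_i$. The crucial step is to upgrade this to an equality, for then $k_i\bar y_i=v^\ast-v_i'\in\Z$, so $\bar y_i\in\tfrac12\Z\subseteq\cL(\{2\})$, exhibiting the desired dyadic optimal dual.

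The main obstacle is the potential failure of the identity $v(k_i)=v^\ast-k_i\bar y_i$: if additional facets of $Q$ become tight along the shift before $t$ reaches $k_i$, or if $\bar x$ is degenerate so that $\bar y$ is not the unique optimal dual, the slope of $v$ can change inside $[0,k_i]$ and the inequality becomes strict. The plan to bypass this is to invoke \Cref{main-char}, whose characterisation of systems TD in a heavy $\cL$ recasts the dyadic TD property as a purely geometric one-step shift condition on $Q$, matching precisely the integer shifts of size $1$ or $2$ supplied by half-resilience. With the characterisation in hand, only the integrality of each $Q_i$ needs to be verified, which is exactly the half-resilience hypothesis, and since $1$ and $2$ are already dyadic the entire verification lives inside $\cL(\{2\})$. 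As a back-up route, one can first establish the non-degenerate case via the shift identity directly and then remove the non-degeneracy hypothesis using a density-based perturbation of $w$ within $\cL(\{2\})$.
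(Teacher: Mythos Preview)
Your parametric shift idea in the first two paragraphs is natural, and you correctly spot its weakness: the identity $v(k_i)=v^\ast-k_i\bar y_i$ need not hold. Since $\bar y$ is merely \emph{some} optimal dual, it remains feasible for the shifted dual, giving only $v(k_i)\le v^\ast-k_i\bar y_i$; equality would require $\bar y$ to stay optimal all the way to $t=k_i$, which fails as soon as a different basis takes over. This is not a technicality that minimal support or non-degeneracy repairs: even at a simple vertex the moving point $\bar x(t)$ may hit a new facet before $t=k_i$, and then the slope of $v$ jumps. So the inequality is all you get, and an upper bound on $\bar y_i$ says nothing about its denominator.

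The real gap is in your third paragraph. You propose to bypass the obstacle by invoking \Cref{main-char}, describing it as ``recasting the dyadic TD property as a purely geometric one-step shift condition on $Q$''. That is not what \Cref{main-char} says. Its condition~(ii) is that every $(w,F,F^+)$-tilt constraint
\[
\frac{1}{\tau-w^\top\rho}\sum_{i\in I_{M,b}(F)\setminus I_{M,b}(F^+)}[b_i-\row_i(M)\rho]\,u_i=1
\]
admits a solution with all $u_i\in\cL_2$. Half-resilience is a statement about shifted polyhedra; tilt constraints are linear equations in auxiliary variables $u_i$. The bridge between the two is exactly what is missing from your plan. Concretely, what the paper does (and what you would need to supply) is: given $F$ and a down-face $F^+$, use half-resilience to locate an \emph{integer} point $\rho\in\aff(F^+)\setminus\aff(F)$ and an index $\hat\imath\in I_{M,b}(F)\setminus I_{M,b}(F^+)$ with $|b_{\hat\imath}-\row_{\hat\imath}(M)\rho|\in\{1,2\}$; then set $u_{\hat\imath}=(\tau-w^\top\rho)/(b_{\hat\imath}-\row_{\hat\imath}(M)\rho)$ and $u_i=0$ otherwise. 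The numerator is an integer (since $w\in\Z^n$, $\rho\in\Z^n$, and $\tau=w^\top\bar x$ with $\bar x$ integral), and the denominator is $\pm1$ or $\pm2$, so $u_{\hat\imath}$ is dyadic. Producing such a pair $(\hat\imath,\rho)$ from half-resilience is a short but genuine argument (one must handle both the case where the shifted face is nonempty and the case where the whole polytope already lies within the slab), and it is the step your proposal skips. Your back-up route does not help either: perturbing $w$ within $\cL_2$ does not alter the system $Mx\le b$ and hence cannot manufacture non-degeneracy, and the slope-change obstruction for $v(t)$ is a feature of $Q$, not of $w$.
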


The study of TDD systems was initially motivated by a conjecture on ideal clutters.
A {\em clutter} $\zC$ is a family of sets over ground set $E(\zC)$ with the property that no distinct pair $S,S'\in\zC$ satisfies $S\subseteq S'$.
Given a clutter $\zC$, we define the $0,1$ matrix $T(\zC)$ where the rows of $T(\zC)$ correspond to the characteristic vectors of the sets in $\zC$.
A clutter $\zC$ is {\em ideal} if the polyhedron $\{x\geq\0: T(\zC)x\geq\1\}$ is integral.
Seymour  \cite[\S 79.3e]{schrijver03c}, proposed the following conjecture,

\vspace{0.1in}\noindent{\bf The Dyadic Conjecture.} \\
{\em Let $\zC$ be an ideal clutter, then $T(\zC)x\geq\1,x\geq\0$ is TDD.}

\vspace{0.1in}\noindent
This conjecture is known to hold when $\zC$ is the clutter of any one of the following families:
(a) odd cycles of a graph \cite{guenin2001,geelen02}, 
(b) $T$-cuts of a graft \cite{lovasz75},
(c) $T$-joins of a graft \cite{abdi22d},
(d) dicuts of a directed graph \cite{lucchesi78},
(e) dijoins of a directed graph \cite{gueninH2024}.
Moreover, when $\zC$ is ideal, $w\in\Z^{E(\zC)}_+$ and $\min\{w^\top x: T(\zC)x\geq\1, x\geq\0\}=2$ then the dual has an optimal solution that is dyadic \cite{abdi-dyadic}.
For both (a) and (b) the system $T(\zC)x\geq\1,x\geq\0$ is in fact $\frac12$-TDI.
In this paper, we present a result based on the intersection of members of the clutter and its blocker. 
A set $B\subseteq E(\zC)$ is a {\em cover} of $\zC$ if $B\cap S\neq\emptyset$ for every $S\in\zC$.
The set of all inclusion-wise minimal covers of a clutter $\zC$ forms another clutter called the {\em blocker} of $\zC$.
We denote by $b(\zC)$ the blocker of $\zC$.

We prove the following result (see \Cref{dyadic-co}(a)),
\begin{theorem}\label{intro-inter3}
Let $\zC$ be an ideal clutter. 
If for every $S\in\zC$ and every $B\in b(\zC)$ we have $|S\cap B|\leq 3$ then $T(\zC) x \geq\1,x\geq\0$ is TDD.
\end{theorem}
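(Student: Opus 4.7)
My plan is to invoke \Cref{intro-TDD}: it suffices to show that the system $T(\zC)x \geq \1$, $x \geq \0$, rewritten in standard $\leq$-form, is half-resilient. The polyhedron $Q := \{x \geq \0 : T(\zC)x \geq \1\}$ is integral (since $\zC$ is ideal) and, in all non-trivial cases, full-dimensional. Half-resilience then reduces to verifying that, for each facet-defining constraint, at least one of the two polyhedra obtained by shifting its right-hand side inward by $1$ or by $2$ is integral.

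For a non-negativity constraint $x_e \geq 0$, shifting by $1$ always works: the translation $y := x - \chi^{\{e\}}$ transforms $Q \cap \{x_e \geq 1\}$ into the covering relaxation of the deletion minor $\zC \setminus e$. Indeed, each constraint $x(S') \geq 1$ with $e \in S'$ becomes $y(S') \geq 0$ and is absorbed by non-negativity, while each with $e \notin S'$ is preserved as $y(S') \geq 1$. Since ideality is closed under minors, this polyhedron is integral.

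For each clutter constraint $x(S) \geq 1$ with $S \in \zC$, I claim that shifting by $2$ works, i.e., $Q_3 := Q \cap \{x(S) \geq 3\}$ is integral. The integer vertices of $Q$ are precisely the characteristic vectors $\chi^B$ for $B \in b(\zC)$, and by hypothesis each satisfies $x(S) \leq 3$, so the shift by $2$ is the ``maximal'' shift consistent with $Q$'s vertex structure. To certify integrality of $Q_3$, I plan to show every $x \in Q_3$ lies in $\mathrm{conv}\{\chi^C : C$ a cover of $\zC$ with $|C \cap S| \geq 3\} + \mathbb{R}^n_+$. Starting from a decomposition $x = \sum_B \lambda_B \chi^B + r$ supplied by ideality (with $\lambda \geq 0$, $\sum_B \lambda_B = 1$, $r \geq 0$), the assumption $x(S) \geq 3$ gives $r(S) \geq \sum_B \lambda_B (3 - |B \cap S|)$; this total residue on $S$ provides a ``budget'' to augment each $B$ with $|B \cap S| < 3$ by $3 - |B \cap S|$ elements drawn from $S \setminus B$, producing augmented covers $C_B = B \cup A_B$ with $|C_B \cap S| = 3$.

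The principal obstacle is realizing the augmentation as a feasible transportation plan: for each $B$ one must distribute weight $\lambda_B (3 - |B \cap S|)$ across $e \in S \setminus B$ so that at each $e \in S$ the cumulative demand does not exceed $r_e$. This requires a Hall-type marginal condition, which I expect to follow from the ideality of the blocker $b(\zC)$ (also ideal) together with the symmetric intersection bound. Once such a transportation is produced, one obtains $x = \sum_B \lambda_B \chi^{C_B} + r'$ with $r' = r - \sum_B \lambda_B \chi^{A_B} \geq 0$, exhibiting $x$ as a convex combination of integer points of $Q_3$ plus a non-negative vector. This proves $Q_3$ integral; combined with the non-negativity case, the system is half-resilient, and \Cref{intro-TDD} delivers that $T(\zC)x \geq \1,\ x \geq \0$ is TDD.
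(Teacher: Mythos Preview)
Your plan via half-resilience is a legitimate alternative to the paper's route (which applies \Cref{main-char} directly and constructs braces face-by-face rather than proving half-resilience globally), and your treatment of the non-negativity constraints via the deletion minor is correct. The gap is in your argument that $Q_3 := Q \cap \{x : x(S) \geq 3\}$ is integral. The containment you aim for, $Q_3 \subseteq \mathrm{conv}\{\chi^C : C \text{ a cover with } |C \cap S| \geq 3\} + \R^n_+$, is false. Take the single-member clutter $\zC = \{\{1,2,3\}\}$, where the hypothesis holds trivially: the point $(3,0,0)$ is a vertex of $Q_3$, yet the only cover $C$ with $|C \cap S| \geq 3$ is $\{1,2,3\}$, and $(3,0,0) \notin (1,1,1) + \R^3_+$. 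The problem is structural: your augmented covers $C_B = B \cup A_B$ are $0/1$ vectors, whereas the integer vertices of $Q_3$ can have entries exceeding $1$ (they arise by sliding a vertex $\chi^B$ along an extreme ray $e_j$ of $\rec(Q) = \R^n_+$). No transportation scheme can fix this, because the target set is simply too small.

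The claim that $Q_3$ is integral is nevertheless true, but the proof must account for these non-$0/1$ vertices. One clean argument: any vertex of $Q_3$ that is not already a vertex of $Q$ lies on an edge of $Q$ intersected with the hyperplane $x(S) = 3$. A bounded edge has endpoints $\chi^{B_1}, \chi^{B_2}$ with both $|B_i \cap S| \leq 3$, so $x(S) \leq 3$ along the entire edge and the hyperplane meets it only at an endpoint; an unbounded edge $\chi^B + \R_+ e_j$ with $j \in S$ meets the hyperplane at $t = 3 - |B \cap S| \in \{0,1,2\}$, which is integral (and if $j \notin S$ no new vertex is created). This edge analysis is essentially the brace construction in the paper's proof of \Cref{main-inter-n}, so once you replace the transportation argument with it, your route and the paper's converge.
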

\noindent
Note, the following related result \cite[Theorem 2]{cornuejols00}.
\begin{theorem}
Let $\zC$ be an ideal clutter. 
If for every $S\in\zC$ and every $B\in b(\zC)$ we have $|S\cap B|\leq 2$ then $T(\zC) x \geq\1,x\geq\0$ is TDI.
\end{theorem}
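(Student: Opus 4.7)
The plan is to invoke \Cref{intro-TDD}: it suffices to show that the system $-T(\zC)x \leq -\1,\ -x \leq \0$ is half-resilient. The polyhedron $Q := \{x : x \geq \0,\ T(\zC)x \geq \1\}$ is full-dimensional, and $Q$ is integer since $\zC$ is ideal. So I only need to check, for each constraint, that shifting its right-hand side by $1$ or by $2$ produces an integer polyhedron.

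For a nonnegativity constraint $-x_j \leq 0$, I would shift by $1$, producing $x_j \geq 1$. Via the translation $x_j \mapsto x_j + 1$, the polyhedron $Q \cap \{x : x_j \geq 1\}$ rewrites (up to an $\R_+$ factor in the freed coordinate) as the set covering polyhedron of the deletion minor $\zC \setminus j$, whose members are those $S \in \zC$ not containing $j$. Since deletion minors of ideal clutters are ideal, this polyhedron is integer, and so is $Q \cap \{x : x_j \geq 1\}$.

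For a clutter constraint corresponding to $S \in \zC$, I would shift by $2$, producing $\sum_{e \in S} x_e \geq 3$, and show that $\tilde Q := Q \cap \{x : \sum_{e \in S} x_e \geq 3\}$ is integer. The crucial observation is that every integer vertex of $Q$ is the characteristic vector $\chi_B$ of some minimal cover $B \in b(\zC)$, so by hypothesis $\sum_{e \in S}(\chi_B)_e = |S \cap B| \leq 3$. Thus for a vertex $x^*$ of $\tilde Q$, the case $\sum_{e \in S} x^*_e > 3$ cannot occur (it would force $x^*$ to be a vertex of $Q$, contradicting the bound), so $\sum_{e \in S} x^*_e = 3$. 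Either $x^*$ is itself a vertex of $Q$ (hence integer with $\sum = 3$, done), or $x^*$ lies in the relative interior of a $1$-dimensional face $G$ of $Q$. I would then split by the type of $G$: if $G$ is a bounded edge $[v_1, v_2]$, its integer endpoints satisfy $\sum_{e' \in S} v_{i,e'} \leq 3$, and $\sum_{e' \in S} x^*_{e'} = 3$ rewritten as a convex combination forces both to equal $3$; the whole edge then lies in $\{x : \sum_{e' \in S} x_{e'} = 3\} \subseteq \tilde Q$, so $x^*$ cannot be extreme in $\tilde Q$, a contradiction. If $G$ is an unbounded ray $\{v + t \chi_e : t \geq 0\}$ (using that every extreme ray of $Q$ is an axis direction $\chi_e$), then the intersection $G \cap \{x : \sum_{e' \in S} x_{e'} = 3\}$ is either the single integer point $v + (3 - \sum_{e' \in S} v_{e'})\chi_e$ when $e \in S$ (so $x^*$ is integer), or the whole ray $G$ when $e \notin S$ and $\sum_{e' \in S} v_{e'} = 3$ (so $x^*$ is again not extreme, a contradiction).

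The main obstacle is the clutter constraint case, and in particular handling the unbounded $1$-dimensional faces of $Q$. The key structural ingredient is the identification of the extreme rays of $Q$ as standard axis directions $\chi_e$, which guarantees that crossings of rays with the shifted hyperplane $\{x : \sum_{e \in S} x_e = 3\}$ are at integer points; this is also what makes the bound $|S \cap B| \leq 3$ exactly right, since the half-resilient condition permits a shift of at most $2$.
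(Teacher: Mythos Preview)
There is a fundamental mismatch between what you prove and what the statement asks. You invoke \Cref{intro-TDD}, whose conclusion is that a half-resilient system is \emph{TDD}. The statement, however, asks for \emph{TDI}, which is strictly stronger. The paper's toolkit does not let you upgrade half-resilience --- or even full resilience --- to TDI without an extra non-degeneracy hypothesis: \Cref{sufficient-res-co}(a) shows that resilience only gives TD in~$\cL$ for every heavy~$\cL$ (i.e.\ near-TDI), and \Cref{intro-tdi}/\Cref{gen-tdi} need non-degeneracy to pass from near-TDI to TDI (cf.\ the near-TDI-but-not-TDI example~\eqref{eq-w1}). Set-covering systems of ideal clutters are in general degenerate: already for $\zC=\{\{1,2\},\{1,3\}\}$ on ground set $\{1,2,3\}$ the vertex $(1,0,0)$ has four tight constraints in~$\R^3$, and this clutter is ideal with $|S\cap B|\le 2$ for all $S\in\zC$, $B\in b(\zC)$. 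So even if you tightened your argument to prove \emph{resilience} (shift by~$1$) under the actual hypothesis $|S\cap B|\le 2$ --- which does go through --- you would still only obtain near-TDI, not TDI.

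You also quote the bound as $|S\cap B|\le 3$ throughout (``by hypothesis $\ldots\le 3$'' and ``the bound $|S\cap B|\le 3$ exactly right''), whereas the hypothesis here is $\le 2$. This strongly suggests you are really proving \Cref{intro-inter3}. Read that way, your argument is a correct and tidy proof that the system is half-resilient whenever $|S\cap B|\le 3$, and hence of \Cref{intro-inter3} via \Cref{intro-TDD}; this is organised somewhat differently from the paper's proof of \Cref{main-inter-n}, which constructs $(F,F^+)$-braces with gap in~$\zP$ directly rather than verifying half-resiliency globally, but the two routes are close in spirit.

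Finally, note that the paper does not itself prove the present TDI statement; it is quoted from~\cite{cornuejols00}, whose argument is combinatorial and does not pass through the resilience framework.
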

\subsection{Organization of the paper}
In \S\ref{sec-gen-char} we give a characterization of when a system is TD in $\cL$ for a heavy set $\cL$.
Since the statement is a bit technical, we postpone the proof of that result until \S\ref{sec-gen-proof} and first focus on applications.
In \S\ref{sec-applications} we derive sufficient conditions for a system to be TD in $\cL$ for various choices of $\cL$ and explain the role of resiliency and half-resiliency.
\S\ref{sec-tdi} leverages these results to characterize non-degenerate TDI systems.
A special case of Seymour's Dyadic Conjecture is proved in \S\ref{sec-seymour}.
\section{The key result}\label{sec-gen-char}
In this section we present a characterization of when a system $Mx\leq b$ is Totally Dual in $\cL$ for the case where $\cL$ is heavy.
First, we need to present the central notion of the \emph{tilt constraints}.
\subsection{The tilt constraint}
For a subset $S$ of $\R^n$, $\aff(S)$ denotes the {\em affine hull} of $S$, i.e., the smallest affine space that contains~$S$.
Consider a system $Mx\leq b$ where $M$ is an $m\times n$ matrix.
Constraint $i\in[m]$ of $Mx\leq b$ is {\em tight} for some $x\in\R^n$ if $\row_i(M)x=b_i$.
Constraint $i\in[m]$ of $Mx\leq b$ is {\em tight} for some $S\subseteq\R^n$ if it is tight for every $x\in S$.
The {\em implicit equalities} of $Mx\leq b$ are the constraints of $Mx\leq b$ that are tight for $Q:=\{x \in \R^n : Mx\leq b\}$.
Given a set $S\subseteq\R^n$ we denote by $I_{M,b}(S)\subseteq[m]$ the index set of tight constraints of $Mx\leq b$ for $S$.
We abuse the terminology and say that $F$ is a face of the system $Mx\leq b$ to mean that $F$ is a face of $Q$.
Given faces $F,F^+$ of $Q$ we say that $F^+$ is a {\em down-face} of $F$, if $F\subset F^+$ and $\dim(F^+) = \dim(F)+1$.

Consider now nonempty faces $F,F^+$ of $Mx\leq b$ where $F^+$ is a down-face of $F$ and assume that $F$ is defined by a supporting hyperplane 
$\{x \in \R^n : w^\top x=\tau\}$ for some (non-zero) vector $w\in\R^n$ and some $\tau\in\R$, i.e.,
$F=Q\cap\{x \in \R^n : w^\top x=\tau\}$ and $Q \subseteq \{x \in \R^n\, : \, w^{\top}x \leq \tau\}$.
Pick, $\rho\in \aff(F^+)\setminus\aff(F)$ and define,
\begin{equation}\label{tilt-eq}
\frac1{\tau-w^\top\rho}\sum\Bigl([b_i-\row_i(M)\rho]u_i : i\in I_{M,b}(F)\setminus I_{M,b}(F^+)\Bigr)=1.
\end{equation}
This is a constraint of the form, $\alpha^\top u=1$, where $\alpha$ is determined by $w,F,F^+,\tau,\rho$.
It defines a hyperplane in the space of the variables $u_i$.
In our applications, we will be given $\alpha$ and will be interested in knowing if there exists $u_i\in\cL$ that satisfy $\alpha^\top u=1$ for some suitable choice of $\cL$.
Recall, that $F$ and $F^+$ are determined by their tight constraints, i.e. $F=Q\cap\{x \in \R^n : \row_i(M)x=b_i, i\in I_{M,b}(F)\}$ and $F^+=Q\cap\{x \in \R^n : \row_i(M)x=b_i, i\in I_{M,b}(F^+)\}$.
Therefore, since $F\subset F^+$ we have $I_{M,b}(F)\setminus I_{M,b}(F^+)\neq\emptyset$.
It follows that \eqref{tilt-eq} has a least one variable $u_i$ (with a nonzero coefficient).
In particular, \eqref{tilt-eq} always has a solution with all variables $u_i$ in $\R$.

We say that \eqref{tilt-eq} is the {\em $(w,F,F^+)$-tilt constraint for the system $Mx\leq b$} (we omit specifying $Mx\leq b$ when clear from the context).
We do not refer to $\tau$ as $w^\top x=\tau$ will be a supporting hyperplane of $Q$ exactly 
when $w$ is admissible and $\tau$ is the optimal value of $(P\!:\!M,b,w)$, thus $\tau$ is determined by $w$ and $Mx\leq b$.
We do not refer to $\rho$ as \eqref{tilt-eq} is independent of the choice of $\rho \in \aff(F^+)\setminus \aff(F)$ as we will see in \Cref{ind-rep}.
\begin{example}\label{ex-part1}
Consider,
\begin{equation}\label{ex-ridge}
M=
\left(
\begin{array}{rr}
 1 & 1\\
-1 & 0\\
 0 & -1
\end{array}
\right)
\quad
b=
\begin{pmatrix}
3 \\ 0 \\ 0
\end{pmatrix},
\quad
w=
\begin{pmatrix}
0 \\ 1
\end{pmatrix}
\quad\mbox{and}\quad
\tau=3.
\end{equation}
Note that $Q:=\{x\in\R^n:Mx\leq b\}$ is given in \Cref{fig-resilient}.
Observe that $Q\cap\{x  : w^\top x=\tau\}$ contains a unique point $(0,3)$ that forms a face $F$ of $Mx\leq b$.
Let $F^+_1$ be the line segment with ends $(0,3)$ and $(3,0)$.
Let $F^+_2$ be the line segment with ends $(0,3)$ and $(0,0)$.
Then for $i=1,2$, $F^+_i$ is a down-face of $F$.
Consider $\rho_1=(3,0)$ and $\rho_2=(0,0)$.
We have for $i=1,2$, $\rho_i\in F^+_i\setminus F$.
Note, $I_{M,b}(F)=\{1,2\}$, $I_{M,b}(F^+_1)=\{1\}$, and $I_{M,b}(F^+_2)=\{2\}$.
Then the $(w,F,F^+_1)$-tilt and $(w,F,F^+_2)$-tilt constraints are respectively,
\begin{align*}
\frac{1}{\tau-w^\top\rho_1}[b_2-row_2(M)\rho_1]u_2 = 1 & \quad\iff\quad \frac33u_2=1,\\
\frac{1}{\tau-w^\top\rho_2}[b_1-row_1(M)\rho_2]u_1 = 1 & \quad\iff\quad \frac33u_1=1.
\end{align*}
\end{example}
\subsection{The characterization}\label{sec-gsc-def}
Consider a set $S=\{a^1,\ldots,a^m\}$ of integer vectors in $\Z^n$.
Then $S$ is an {\em $\cL$-generating set for a cone} ($\cL$-GSC) 
if every vector in the intersection of the conic hull of the vectors in $S$ and 
$\cL^n$ can be expressed as a conic combination of the vectors in $S$ with multipliers in~$\cL$ \cite{abdi23b}.
Here is our key result,
\begin{theorem}\label{main-char}
Let $M\in\Z^{m\times n}, b\in\Z^m$ and let $\cL$ be a heavy set.
Denote by $M^=x=b^=$ the implicit equalities of $Mx\leq b$.
Then, $Mx\leq b$ is TD in $\cL$ if and only if both of the following conditions hold: 
\begin{enumerate}[\;i.]
\item 
The rows of $M^=$ form an $\cL$-GSC;
\item 
For every admissible $w\in\cL^n$, denote by $F$ the set of optimal solutions of $(P\!:\!M,b,w)$ and let $F^+$ be a down-face of $F$.
Then, the $(w,F,F^+)$-tilt constraint has a solution with variables in~$\cL$.
\end{enumerate}
\end{theorem}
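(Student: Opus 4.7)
The proof splits into necessity and sufficiency. Necessity is relatively direct, combining LP duality with complementary slackness: an $\cL$-valued optimal dual immediately certifies both conditions. Sufficiency goes by induction on the codimension $\dim Q-\dim F$ of the optimal face $F$ of the primal $(P\!:\!M,b,w)$, using (i) for the base case $F=Q$ and (ii) to strip off one ``layer'' of tight constraints per inductive step, reducing to a weight $w'$ whose optimal face is strictly larger.

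\textbf{Necessity.} Assume $Mx\le b$ is TD in $\cL$. For (i), take any $w\in\cL^n$ in the conic hull of the rows of $M^=$. Since $M^=x=b^=$ on $Q$, $w^\top x$ is constant on $Q$, so every point of $Q$ is primal-optimal and the optimal face is $Q$. Complementary slackness forces any optimal dual to be supported on implicit equalities; the $\cL$-valued optimal dual supplied by the TD in $\cL$ hypothesis therefore exhibits $w$ as the required $\cL$-conic combination. For (ii), take admissible $w\in\cL^n$, optimal face $F$, down-face $F^+$, and $y^*\in\cL^m$ optimal. Evaluate $(b-M\rho)^\top y^*$ in two ways: it equals $b^\top y^*-w^\top\rho=\tau-w^\top\rho$, and it splits by index, with terms for $i\in I_{M,b}(F^+)$ vanishing since $\row_i(M)\rho=b_i$, and terms for $i\notin I_{M,b}(F)$ vanishing because $y^*_i=0$. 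Only the indices $i\in I:=I_{M,b}(F)\setminus I_{M,b}(F^+)$ survive, yielding $\sum_{i\in I}(b_i-\row_i(M)\rho)y^*_i=\tau-w^\top\rho$. Dividing by $\tau-w^\top\rho$, which is nonzero because $\rho\notin\aff(F)\subseteq\{x:w^\top x=\tau\}$, shows that $u_i:=y^*_i$ is the desired $\cL$-valued solution of the tilt.

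\textbf{Sufficiency.} Assume (i) and (ii), and induct on $\dim Q-\dim F$. For the base case $F=Q$, $w^\top x$ is constant on $Q$; admissibility plus complementary slackness for any real optimal dual forces $w$ into the cone of rows of $M^=$, and (i) yields an $\cL$-valued conic representation which, extended by zeros, gives an $\cL$-valued optimal dual. For the inductive step, pick any down-face $F^+$ of $F$ and a lattice point $\rho\in\Z^n\cap(\aff(F^+)\setminus\aff(F))$, available because $F^+$ is a face of the integer polyhedron $Q$ and therefore contains integer points that affinely span $\aff(F^+)$, while $\aff(F)$ is a proper affine subspace. With integer $\rho$, the tilt has integer coefficients $b_i-\row_i(M)\rho$, and (ii) combined with the density plus additive-subgroup properties of $\cL$ produces $u\in\cL^I$ solving the tilt such that $u$ arises as the $I$-restriction of some real non-negative optimal dual of $w$. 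The identity $w':=w-\sum_{i\in I}u_i\row_i(M)=M^\top_{I_{M,b}(F^+)}y^*_{I_{M,b}(F^+)}$ (for that underlying $y^*$) then certifies $w'\in\cL^n$ is admissible with $w'^\top x\le\tau':=\tau-\sum_{i\in I}u_ib_i$ on $Q$ and equality on $F^+$. The inductive hypothesis applied to $w'$ (whose optimal face contains $F^+$, hence has strictly smaller codimension) produces $y'\in\cL^m$ optimal, and $y_i:=y'_i+u_i\1[i\in I]$ is an $\cL$-valued optimal dual for $w$.

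\textbf{Main obstacle.} The delicate step is selecting $u$ so that all three properties hold simultaneously: (a) $u\in\cL^I$; (b) the completion $y$ is non-negative; (c) $w'$ has optimal face properly containing $F$. Condition (ii) directly addresses only (a). Properties (b) and (c) together amount to requiring $u$ to lie in the polytope $V=\{y^*_I:y^*\text{ is a non-negative real optimal dual for }w\}\subseteq\R^I_{\ge 0}$, which is contained in the tilt hyperplane. Bridging this gap requires that the $\cL$-valued tilt solutions be dense in the real tilt hyperplane, which follows from heaviness of $\cL$ once the tilt coefficients are integer (via $\rho\in\Z^n$), combined with the nonempty relative interior of $V$ in that hyperplane. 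Handling subtle degenerate cases, where $V$ might be lower-dimensional than the tilt hyperplane, will likely require refining the choice of $F^+$ or invoking (ii) simultaneously across several down-faces, and is the technical heart of the argument.
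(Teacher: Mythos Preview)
Your necessity argument is fine and matches the paper's. The sufficiency argument, however, has the gap you yourself flag as ``the main obstacle,'' and the paper resolves it by a genuinely different mechanism that you are missing.

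You try to choose $u\in\cL^I$ that is simultaneously a tilt solution \emph{and} the $I$-restriction of some non-negative real optimal dual, so that $w'=w-\sum_{i\in I}u_i\,\row_i(M)$ is admissible with optimal face containing $F^+$, allowing direct recursion on $w'$. As you note, the set $V$ of such restrictions may be a proper (lower-dimensional) subset of the tilt hyperplane, and density of $\cL$ does not help there. This is not a mere technicality: the approach does not go through without a further idea.

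The paper's proof avoids this entirely. It never asks the tilt solution to come from a non-negative dual, and it never applies the inductive hypothesis to $\bar w$. Instead:
\begin{enumerate}
\item For every $\Omega\in\cL^n$ lying in the \emph{cone} of $\{\row_i(M):i\in I_{M,b}(F^+)\}$, the optimal face of $(P\!:\!M,b,\Omega)$ contains $F^+$, hence has dimension $>\dim F$; by the extremal/inductive hypothesis there is an $\cL$-valued optimal dual for $\Omega$, supported on $I_{M,b}(F^+)$. Thus those rows form an $\cL$-GSC, hence an $\cL$-GSS.
\item Take \emph{any} $\cL$-valued tilt solution $u$ (supplied by (ii)). The geometric meaning of the tilt constraint is that $\bar w:=w-\sum_{i\in I}u_i\,\row_i(M)$ lies in the \emph{span} (not cone) of $\{\row_i(M):i\in I_{M,b}(F^+)\}$.
\item By the $\cL$-GSS property, write $\bar w=\sum_{i\in I_{M,b}(F^+)}z_i\,\row_i(M)$ with $z_i\in\cL$ (no sign condition). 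Setting $\bar y_i=u_i$ on $I$, $\bar y_i=z_i$ on $I_{M,b}(F^+)$, and $0$ elsewhere gives $\bar y\in\cL^m$ with $M^\top\bar y=w$ and $\bar y_i=0$ for $i\notin I_{M,b}(F)$.
\item This $\bar y$ is in the \emph{affine hull} of the optimal dual solutions (which equals $\{y:M^\top y=w,\ y_i=0\ \forall i\notin I_{M,b}(F)\}$). The density theorem for heavy $\cL$ --- a nonempty convex set with rational affine hull meets $\cL^m$ iff its affine hull does --- then produces a genuine (non-negative) optimal dual in $\cL^m$.
\end{enumerate}
So the crucial move is: pass from $\cL$-GSC to $\cL$-GSS so you can work in the span; build an $\cL$-point of the affine hull of optimal duals rather than an optimal dual itself; and invoke density on the dual polytope at the very end. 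This completely sidesteps the non-negativity issue you were fighting.

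A secondary error: you claim $\rho\in\Z^n\cap(\aff(F^+)\setminus\aff(F))$ exists ``because $F^+$ is a face of the integer polyhedron $Q$.'' The theorem does not assume $Q$ is integral, and a rational affine subspace need not contain any integer point. The paper's argument does not need an integral $\rho$ at all.
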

\noindent
The proof is postponed until \S\ref{sec-gen-proof}.
Note that condition (i) is vacuously true when the polyhedron $\{x: Mx\leq b\}$ is full-dimensional.
We have the following analogue of \Cref{restricted-w} for tilt constraints.
\begin{RE}\label{restricted-w2}
Let $\cL:=\cL(\cS)$ where $\cS$ is a set of primes.
It suffices to consider in condition (ii) of \Cref{main-char}, $(w,F,F^+)$-tilt constraints for $w\in\Z^n$.
\end{RE}
\begin{proof}
Consider a $(w,F,F^+)$-tilt constraint with $w\in\cL^n$.
For some $\mu$ that is a product of integer powers of primes in $\cS$ we have $\mu w\in\Z^n$.
If $\bar{u}$ is a solution for the $(\mu w,F,F^+)$-tilt constraint with entries in $\cL$, 
then $\frac1\mu\bar{u}$ is a solution for the $(w,F,F^+)$-tilt constraint with entries in $\cL$.
\end{proof}
\noindent We are now ready for our first application of \Cref{main-char}.
\begin{proof}[Proof of \Cref{pq-to-L}]
Consider an arbitrary $(w,F,F^+)$-tilt constraint $\alpha^\top u=1$ of $Mx\leq b$ where $w\in\Z^n$.
Because of \Cref{main-char} and \Cref{restricted-w2}, it suffices to show that $\alpha^\top u=1$ has a solution in $\cL_q$ for every prime $q$.
To that end, we will show that $\alpha^\top u=1$ has an integral solution.
Consider $i\in[k]$.
\Cref{main-char} implies that  $\alpha^\top u=1$ has a solution $u^i$ with entries in $\cL(\cS_i)$.
For some $\mu_i$ that is a product of integer powers of primes in $\cS_i$ we have $\mu_iu^i$ is integral.
Since the gcd of $\mu_1,\ldots,\mu_k$ equals $1$, by Bezout's lemma we have $s_1,\ldots,s_k\in\Z$ for which $\sum_{i\in[k]}s_i\mu_i=1$.
Let $\bar{u}=\sum_{i\in[k]}s_i\mu_iu^i$. Then
\[
\alpha^\top\bar{u}=\sum_{i\in[k]}s_i\mu_i\alpha^\top u^i=\sum_{i\in[k]}s_i\mu_i=1.
\]
Thus $\bar{u}$ is an integral solution to $\alpha^\top u=1$ as required.
\end{proof}
\noindent
The reader might have noticed that in the proof \Cref{pq-to-L} it is shown that if an affine space contains points in $\cL(\cS_i)$ for all $i\in[k]$, then it also contains an integer point. 
We can apply that same idea to the affine hull of the optimal solutions to $(D\!:\!M,b,w)$ and use \Cref{density} for an alternate proof.
\section{Applications of \Cref{main-char}}\label{sec-applications}
In this section, we show how \Cref{main-char} yields sufficient conditions for a system to be Totally Dual in $\cL$ for various choices of $\cL$.
First we need to define \emph{braces}.
\subsection{From braces to solutions of tilt constraints}
Consider $M\in\Z^{m\times n}$ and $b\in\Z^m$. Suppose that $Mx\leq b$ has faces $F,F^+$ where $F^+$ is a down-face of $F$.
We say that a pair $(\ii,\rho)$ is an {\em $(F,F^+)$-brace} for $Mx\leq b$ if the following conditions hold,
\begin{enumerate}[\;b1.]
\item $\rho\in\Z^n$ and $\rho\in\aff(F^+)\setminus\aff(F)$,
\item $\ii\in I_{M,b}(F)\setminus I_{M,b}(F^+)$, 
\item $|b_{\ii}-\row_{\ii}(M)\rho|>0$.
\end{enumerate}
Informally, $(\ii,\rho)$ certifies that $F$ and $F^+$ are distinct faces, namely, $\rho$ is in the affine hull of $F^+$,
but $\rho$ is not in the affine hull of $F$ as it does not satisfy the implicit equality $\row_{\ii}(M)x=b_{\ii}$ of $F$.
The {\em gap} of brace $(\ii,\rho)$ is defined as $|b_{\ii}-\row_{\ii}(M)\rho|$. 
Note, that the gap of a brace is a positive integer. 

\Cref{main-char} highlights the importance of solutions to tilt constraints.
We will see that braces provide a means to finding such solutions.
Before we proceed, we require a definition.
Let $p$ be a positive integer. 
We say that a set $\cL\subseteq\R$ is {\em closed under $p$-division} if for every $\alpha\in\cL$ we have $\frac1p\alpha\in\cL$.
For instance $p$-adic rationals are closed under $p$-division.
\begin{PR}\label{use-brace-for-tilt}
Let $M\in\Z^{m \times n}$, $b\in\Z^m$ and assume that $\{x \in \R^n : Mx\leq b\}$ is an integral polyhedron.
Let $\cL$ be a heavy set that is closed under $p$-division for some positive integer $p$.
Consider $w\in\cL^n$ that is admissible for $M,b,w$.
Let $F$ be the optimal face of $(P\!:\!M,b,w)$ and let $F^+$ be a down-face of $F$.
If there exists an $(F,F^+)$-brace with gap $p$ then the $(w,F,F^+)$-tilt constraint has a solution with all variables in $\cL$.
\end{PR}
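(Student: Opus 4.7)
The plan is to exhibit an explicit solution to the $(w, F, F^+)$-tilt constraint by concentrating all the weight on the brace coordinate $\ii$ and setting every other coordinate to zero. Using the $\rho$ supplied by the brace to write out the tilt constraint, and letting $J := I_{M,b}(F) \setminus I_{M,b}(F^+)$, it takes the form
$$\frac{1}{\tau - w^\top \rho}\sum_{i \in J}[b_i - \row_i(M)\rho]\,u_i \;=\; 1.$$
Condition (b2) gives $\ii \in J$ and condition (b3) gives $b_{\ii} - \row_{\ii}(M)\rho \neq 0$, so one may set $u_i = 0$ for $i \in J \setminus \{\ii\}$ and solve for
$$u_{\ii} \;=\; \frac{\tau - w^\top \rho}{b_{\ii} - \row_{\ii}(M)\rho}.$$
The proposition then reduces to checking that this single value $u_{\ii}$ lies in $\cL$.

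For the denominator, the gap hypothesis gives $|b_{\ii} - \row_{\ii}(M)\rho| = p$, so the denominator equals $\pm p$. For the numerator, I would invoke integrality of $Q$: the non-empty face $F$ must contain an integer point $x^\star$, and by strong duality $\tau = w^\top x^\star$. Combining with $\rho \in \Z^n$ (from (b1)), one gets
$$\tau - w^\top \rho \;=\; w^\top(x^\star - \rho) \;=\; \sum_{i} w_i\,(x^\star_i - \rho_i),$$
with $x^\star - \rho \in \Z^n$. Since $w \in \cL^n$ and $(\cL,+)$ is a subgroup of $(\R,+)$ containing $\Z$, each integer multiple $w_i(x^\star_i - \rho_i)$ of $w_i \in \cL$ belongs to $\cL$, and hence so does their sum. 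Thus $\tau - w^\top \rho \in \cL$.

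Putting these together, $u_{\ii}$ is an element of $\cL$ divided by $\pm p$, which again lies in $\cL$ by closure under $p$-division and closure under additive inversion (from the subgroup property). I expect the only genuinely subtle step is establishing that $\tau - w^\top \rho \in \cL$: it is the place that forces us to use both integrality of $Q$ (to extract an integer optimum in $F$) and the full subgroup structure of $\cL$ (to move arbitrary integer multiples of $\cL$-numbers inside $\cL$, since $w$ is only assumed to be in $\cL^n$ rather than $\Z^n$). Everything else is bookkeeping once one commits to exploiting the brace by zeroing out every tilt variable except $u_{\ii}$.
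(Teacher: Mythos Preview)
Your proposal is correct and follows essentially the same approach as the paper: set all tilt variables to zero except $u_{\ii}$, use the brace's $\rho$ (invoking the independence of the tilt constraint from the choice of $\rho$), and verify $u_{\ii}\in\cL$ by showing the numerator lies in $\cL$ via an integer point of $F$ and the subgroup property, while the denominator is $\pm p$. If anything, your treatment of the sign of the denominator is slightly more careful than the paper's, which simply identifies the denominator with the gap $p$.
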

\begin{proof}
Denote by $(\ii,\rho)$ the $(F,F^+)$-brace. 
Let $I:=I_{M,b}(F)\setminus I_{M,b}(F^+)$.
By \Cref{ind-rep} the $(w,F,F^+)$-tilt constraint is independent of the choice of $\rho\in\aff(F^+)\setminus\aff(F)$.
Thus, we may assume it is of the form,
\begin{equation}\label{tilt-eq-local}
\frac1{\tau-w^\top\rho}\sum_{i\in I}[b_i-\row_i(M)\rho]u_i=1.
\end{equation}
Set $u_i:=0$ for all $i\in I$ where $i\neq\ii$ and set,
\begin{equation}\label{single-term}
u_{\ii}:=\frac{\tau-w^\top\rho}{b_{\ii}-\row_{\ii}(M)\rho}.
\end{equation}
Then $u$ is a solution to \eqref{tilt-eq-local}.
To complete the proof, it suffices to show that $u_{\ii}\in\cL$.
Since $\{x \in \R^n : Mx\leq b\}$ is integral, $F$ contains a point $\bar{x}\in\Z^n$.
Recall that $\tau$ is the optimal value of $(P\!:\!M,b,w)$, thus we have $\tau=w^\top\bar{x}$.
Since $w\in\cL^n$ and $\bar{x}\in\Z^n$ we have $\tau\in\cL$ (we are using the fact that $(\cL,+)$ is a subgroup).
Since $\rho\in\Z^n$ we also have $w^\top\rho\in\cL$.
Hence, the numerator in \eqref{single-term} is in $\cL$.
The denominator is the gap of the $(F,F^+)$-brace, i.e. $p$.
Since, $\cL$ is closed under $p$-division, $u_i\in\cL$ as required.
\end{proof}
\subsection{From resiliency to braces}
Consider matrix $M\in\Z^{m\times n}$ and vector $b\in\Z^m$. Let $p$ be a positive integer.
The system $Mx\leq b$ is said to be {\em $\frac1p$-resilient} if 
(i) $Q:=\{x \in \R^n : Mx\leq b\}$ is an integral polyhedron and
(ii) for every $i\in[m]$,
\[
Q\cap\{x \in \R^n : \row_i(M)x\leq b_i-s(i)\}
\]
is also an integral polyhedron for some $s(i)\in[p]$.
Hence, $Mx\leq b$ is resilient iff it is $1$-resilient and it is half-resilient iff it is $\frac12$-resilient.

Next, we use the notion of $\frac1p$-resiliency to obtain braces with gap $p$.
\begin{PR}\label{resilient-to-brace}
Consider $M\in\Z^{m\times n}$ and $b\in\Z^m$.
Let $F$ be a proper face of $Mx\leq b$ and let $F^+$ be a down-face of $F$.
If $Mx\leq b$ is $\frac1p$-resilient, then there exists an $(F,F^+)$-brace with gap at most $p$.
\end{PR}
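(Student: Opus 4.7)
The plan is to select any index $\ii\in I_{M,b}(F)\setminus I_{M,b}(F^+)$. This set is nonempty since $F$ is a proper codimension-one face of $F^+$, so $\aff(F)$ is a hyperplane inside $\aff(F^+)$, and is in fact cut out there by $\row_{\ii}(M)x=b_{\ii}$ for any such $\ii$. Producing an $(F,F^+)$-brace of gap at most $p$ therefore reduces to exhibiting an integer $\rho\in\aff(F^+)$ whose (integer) value $\row_{\ii}(M)\rho$ lies in the window $[b_{\ii}-p,\,b_{\ii}-1]$. I would invoke $\frac1p$-resiliency at the row $\ii$ to obtain $s\in[p]$ with $Q':=Q\cap\{x:\row_{\ii}(M)x\leq b_{\ii}-s\}$ integral, and then split on whether $F^+$ reaches down to this shifted halfspace.

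In the first case, $F^+\cap\{x:\row_{\ii}(M)x\leq b_{\ii}-s\}\neq\emptyset$. Since $\row_{\ii}(M)x=b_{\ii}$ on $F\subseteq F^+$, convexity of $F^+$ and the intermediate value property force the slice $F^+\cap\{x:\row_{\ii}(M)x=b_{\ii}-s\}$ to be nonempty. This slice is the face of $Q'$ obtained by simultaneously tightening the constraints defining $F^+$ together with the new inequality of $Q'$, so integrality of $Q'$ supplies an integer $\rho$ inside it. Then $(\ii,\rho)$ is a brace with gap exactly $s\leq p$.

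In the second case, $\row_{\ii}(M)x>b_{\ii}-s$ on all of $F^+$. No recession direction $r$ of $F^+$ can have $\row_{\ii}(M)r<0$, otherwise $\row_{\ii}(M)x$ would be unbounded below on $F^+$, contradicting the case hypothesis. So $\row_{\ii}(M)x$ is bounded below on $F^+$ and attains its minimum on some face $G$ of $F^+$. Since $G$ is also a face of the integral polyhedron $Q$, it contains an integer point $\rho$, and then $\row_{\ii}(M)\rho$ is an integer which exceeds $b_{\ii}-s$ (by the case assumption), so is $\geq b_{\ii}-s+1$, and which is strictly less than $b_{\ii}$ because $\ii\notin I_{M,b}(F^+)$ forces the minimum of $\row_{\ii}(M)x$ over $F^+$ to lie strictly below $b_{\ii}$. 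Thus $(\ii,\rho)$ is a brace with gap in $\{1,\ldots,s-1\}\subseteq[p]$.

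The main technical obstacle is verifying in the first case that the slice is genuinely a face of $Q'$, so that integrality of $Q'$ can be invoked; this is a matter of matching the tight constraints describing the slice against the constraint description of $Q'$. In the second case the delicate point is the attainment of the minimum on the possibly unbounded face $F^+$, which the case hypothesis arranges by ruling out the only obstruction. Once both of these are in place, conditions b1--b3 for $(\ii,\rho)$ are immediate and the gap bound $\leq p$ is automatic.
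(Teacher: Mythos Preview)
Your proposal is correct and follows essentially the same approach as the paper's proof. The paper first reduces to the case $F^+=Q$ by observing that $\tfrac1p$-resiliency is inherited by faces, then splits on whether $\kappa:=\sup\{b_{\ii}-\row_{\ii}(M)x:x\in F^+\}$ is at most $p$ (your Case~2) or exceeds $p$ (your Case~1); you skip the reduction and instead verify directly that the slice in Case~1 is a face of $Q'$, which works equally well and makes the two arguments interchangeable.
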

\begin{proof}
Since faces of a polyhedron are obtained by setting suitable inequalities to equality, it follows readily that,
if $Mx\leq b$ is $\frac1p$-resilient for some positive integer $p$, then so is every face of $Mx\leq b$.
We may thus assume that $F^+=\{x \in \R^n : Mx\leq b\}$.
Pick $\ii\in I:=I_{M,b}(F)\setminus I_{M,b}(F^+)$.
Define the following optimization problem,
\begin{equation}\label{opt-gap}
\kappa := \sup\{b_{\ii}-\row_{\ii}(M)x : x\in F^+\}.
\end{equation}
By the choice of $\ii$, observe that $\kappa>0$.
Consider first the case where $\kappa\leq p$. 
Since $Mx\leq b$ is $\frac1p$-resilient, $F^+$ is integral. Since $\kappa$ is finite,
it follows that \eqref{opt-gap} attains its maximum at a point $\rho\in F^+ \cap \Z^n$
(since $F^+$ is integral, every nonempty face of $F^+$ contains
an integral vector).
However, then $(\ii,\rho)$ is an $(F,F^+)$-brace with gap $\kappa\leq p$ as required.
Thus, we may assume that $\kappa>p$.
Since $Mx\leq b$ is $\frac1p$-resilient there exists integer $s(\ii)\in[p]$ for which $Q:=F^+\cap\{x \in \R^n : \row_{\ii}(M)x\leq b_{\ii}-s(\ii)\}$ is an integral polyhedron.
As $\kappa>p$ it follows that the face $Q' := Q\cap\{x \in \R^n : \row_{\ii}(M)x=b_{\ii}-s(\ii)\}$ of $Q$ is non-empty.
Since $Q$ is integral, so is $Q'$.
Hence, $Q'$ contains an integral point $\rho$.
However, then again $(\ii,\rho)$ is an $(F,F^+)$-brace with gap $s(\ii) \leq p$ as required.
\end{proof}
\subsection{Totally Dual in $\cL$ - sufficient conditions}
The next result gives sufficient conditions for $Mx\leq b$ to be TD in $\cL$.
\begin{theorem}\label{key-sufficient}
Let $M\in\Z^{m\times n}$ and $b\in\Z^m$ and let $\cL\subseteq\R$ where $\cL$ is a heavy set.
Let $p$ denote a positive integer, and assume that $\cL$ is closed under $q$-divisions for all $q\in\{2,\ldots,p\}$.
Denote by $M^=x= b^=$ the implicit equalities of $Mx\leq b$.
Then $Mx\leq b$ is TD in $\cL$, if both of the following conditions hold,
\begin{enumerate}[\;i.]
\item The rows of $M^=$ form an $\cL$-GSC.
\item $Mx\leq b$ is $\frac1p$-resilient.
\end{enumerate}
\end{theorem}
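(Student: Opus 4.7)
The plan is to apply \Cref{main-char} and verify its two conditions. Condition (i) is exactly hypothesis (i) of the theorem, so nothing needs to be shown there. All the work lies in condition (ii): for every admissible $w\in\cL^n$, writing $F$ for the optimal face of $(P\!:\!M,b,w)$ and $F^+$ for any down-face of $F$, the $(w,F,F^+)$-tilt constraint must have a solution with entries in $\cL$. If $F$ admits no down-face then the requirement is vacuous, so I will assume $F$ is a proper face of $Q:=\{x\in\R^n:Mx\leq b\}$.

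To obtain the required solution, the plan is to chain the brace machinery developed in the previous subsections. First, since $Mx\leq b$ is $\frac1p$-resilient and $F$ is a proper face, \Cref{resilient-to-brace} produces an $(F,F^+)$-brace $(\ii,\rho)$ whose gap $g$ satisfies $1\leq g\leq p$. Next, applying \Cref{use-brace-for-tilt} with the integer $g$ playing the role of the parameter $p$ in its statement yields a solution of the $(w,F,F^+)$-tilt constraint with every entry in $\cL$---provided $\cL$ is closed under $g$-division. When $g\in\{2,\ldots,p\}$ this is exactly the closure hypothesis assumed on $\cL$; when $g=1$ the requirement is trivial. Both conditions of \Cref{main-char} are therefore satisfied, and $Mx\leq b$ is TD in $\cL$.

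The one mildly delicate point in this plan is that \Cref{resilient-to-brace} only guarantees a brace with gap \emph{at most} $p$, rather than exactly $p$, so the closure hypothesis on $\cL$ must uniformly handle every possible gap value between $1$ and $p$. This is precisely why the statement demands closure under $q$-division for all $q\in\{2,\ldots,p\}$ rather than closure under $p$-division alone---otherwise a brace with a smaller gap could produce a tilt-constraint coefficient whose reciprocal falls outside $\cL$. Beyond this bookkeeping, the proof is essentially an assembly of the two previously established propositions, and no new argument is needed.
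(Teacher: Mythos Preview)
Your proposal is correct and follows the same route as the paper: invoke \Cref{main-char}, observe that condition (i) is the hypothesis, and for condition (ii) combine \Cref{resilient-to-brace} with \Cref{use-brace-for-tilt} exactly as you describe. The only point worth making explicit (which the paper does and you leave implicit) is that the integrality of $\{x:Mx\leq b\}$---required by \Cref{use-brace-for-tilt}---is part of the definition of $\frac1p$-resilient.
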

\begin{proof}
Consider an admissible $w\in\cL^n$, denote by $F$ the set of optimal solutions of $(P\!:\!M,b,w)$ and let $F^+$ be a down-face of $F$.
Because of \Cref{main-char}, it suffices to show that the $(w,F,F^+)$-tilt constraint has a solution with variables in~$\cL$.
Since $Mx\leq b$ is $\frac1p$-resilient, there exists by \Cref{resilient-to-brace} an $(F,F^+)$-brace $(\ii,\rho)$ with gap $q\leq p$.
By hypothesis, $\cL$ is closed under $q$-division, and $\{x \in \R^n : Mx\leq b\}$ is an integral polyhedron, as $Mx\leq b$ is $\frac1p$-resilient.
It then follows from \Cref{use-brace-for-tilt} that the $(w,F,F^+)$-tilt constraint has a solution in $\cL$ as required.
\end{proof}
\noindent
Given a prime $p$, let $p_1,\ldots,p_{\ell}:=p$ denote the set of all primes up to and including $p$.
We write $\cL_{[p]}$ for $\cL(\{p_1,\ldots,p_{\ell}\})$.
In particular, $\cL_{[p]}$ is a heavy set.
We have the following immediate consequences of \Cref{key-sufficient},
\begin{CO}\label{sufficient-res-co}
Let $M\in\Z^{m\times n}$ and $b\in\Z^m$ and suppose that $\{x \in \R^n : Mx\leq b\}$ is full-dimensional. 
\begin{enumerate}[\;\;(a)]
\item If $Mx\leq b$ is resilient, then it is TD in $\cL$ for every heavy set $\cL$.
\item If $Mx\leq b$ is half-resilient, then it is TDD.
\item If $Mx\leq b$ is $\frac1p$-resilient for an integer $p\geq 2$, then it is TD in $\cL_{[p]}$.
\end{enumerate}
\end{CO}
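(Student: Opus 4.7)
The plan is to deduce all three parts as direct applications of \Cref{key-sufficient}. The first key observation is that full-dimensionality of $\{x\in\R^n : Mx\leq b\}$ forces there to be no implicit equalities, so the matrix $M^=$ is empty and condition (i) of \Cref{key-sufficient} is vacuously satisfied. Thus in each case I only need to pick the right integer $p$ matching the resiliency hypothesis, select the appropriate heavy set $\cL$, and verify that $\cL$ is closed under $q$-division for every $q\in\{2,\ldots,p\}$. The $\frac1p$-resiliency (condition (ii) of \Cref{key-sufficient}) will then be precisely the assumption of the corresponding part, and the conclusion is immediate.

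For part (a), resilience is $\frac11$-resilience, so I would take $p=1$; the range $\{2,\ldots,1\}$ is empty, no closure hypothesis is imposed, and \Cref{key-sufficient} applies to every heavy $\cL$. For part (b), half-resilience is $\frac12$-resilience, so I take $p=2$ and $\cL:=\cL(\{2\})$, the dyadic rationals. This set is heavy (as noted in \S\ref{sec-tdl}, following from the fact that $\cL(\cS)$ is heavy for every set of primes $\cS$), and by definition every dyadic rational divided by $2$ remains dyadic, so closure under $2$-division is automatic. \Cref{key-sufficient} then gives TDD.

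For part (c), I would set $\cL:=\cL_{[p]}=\cL(\{p_1,\ldots,p_\ell\})$, where $p_1,\ldots,p_\ell$ enumerate the primes up to and including $p$. Heaviness again comes from \S\ref{sec-tdl}. The only thing left to check is closure under $q$-division for each $q\in\{2,\ldots,p\}$: factoring $q$ into primes, every prime factor is at most $q\leq p$ and hence lies in $\{p_1,\ldots,p_\ell\}$, so $\frac1q\in\cL_{[p]}$ and multiplication by $\frac1q$ preserves $\cL_{[p]}$. Invoking \Cref{key-sufficient} finishes the argument.

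The main (and essentially only) non-trivial point is this last verification of closure under $q$-division for composite $q$; but it is fully elementary via unique factorization, since every prime factor of such a $q$ is automatically bounded by $p$. All the substantive work — building braces from $\frac1p$-resiliency and converting them to solutions of tilt constraints — has already been encapsulated in \Cref{key-sufficient}, so no real obstacle remains.
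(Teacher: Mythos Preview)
Your proposal is correct and follows essentially the same approach as the paper: the paper also notes that full-dimensionality makes condition (i) of \Cref{key-sufficient} vacuous, then applies \Cref{key-sufficient} with $p=1$ for (a), $p=2$ for (b) (using that $\cL_2$ is closed under $2$-division), and general $p$ for (c) (using that $\cL_{[p]}$ is closed under $q$-division for all $q\in\{2,\ldots,p\}$). Your additional remark that closure under $q$-division for composite $q$ follows from unique factorization is a small elaboration the paper leaves implicit.
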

\begin{proof}
Hypothesis (i) in \Cref{key-sufficient} trivially holds as $\{x \in \R^n : Mx\leq b\}$ is full-dimensional.
Applying \Cref{key-sufficient} for the case where:
$p=1$ yields (a);  
$p=2$ yields (b) as $\cL_2$ is closed under $2$-divisions; and
$p\geq 3$ yields (c) as $\cL_{[p]}$ is closed under $q$-divisions for every $q \in \{2,\ldots,p\}$.
\end{proof}
We say that $Mx\leq b$ is {\em $p$-small} if 
(i) $\{x \in \R^n : Mx\leq b\}$ is an integral polytope and 
(ii) for every $i\in[m]$ and every extreme point $\bar{x}\in Q$ we have $\row_i(M)\bar{x}\geq b_i-p$.
Equivalently (ii) holds when the largest entry of the {\em slack matrix}\footnote{see \cite[\S 4.10]{CCZ2014} for a definition.} of $Mx\leq b$ is at most $p$.
\begin{RE}\label{small-to-resilient}
If $Mx\leq b$ is $p$-small, then $Mx\leq b$ is $\frac1p$-resilient.
\end{RE}
\begin{proof}
Suppose $Mx\leq b$ is $p$-small. Pick $i\in[m]$ where $\row_i(M)x=b_i$ is not an implicit equality of $\{x \in \R^n : Mx\leq b\}$. The optimal solution to,
\[
s(i):=\max\{b_i-\row_i(M)x:Mx\leq b\},
\]
is attained by an extreme point of $Q=\{x:Mx\leq b\}$ and $s(i)\leq p$.
Then $Q\cap \{x \in \R^n :  \row_i(M) x \leq b_i-s(i)\}$ is a face of $Q$ and hence is integral.
By definition, this implies that $Mx\leq b$ is $\frac1p$-resilient.
\end{proof}
\noindent
Combining this observation with \Cref{sufficient-res-co} we deduce that for $M\in\Z^{m\times n}, b\in\Z^m$,
and $\{x:Mx\leq b\}$ full-dimensional, when $Mx\leq b$ is $2$-small it is TDD.
\section{Total Dual Integrality}\label{sec-tdi}
In this section, we give a geometric characterization of non-degenerate TDI systems.
Note that a $\Z$-GSC is also known as a {\em Hilbert Cone}~\cite{GP79}.
We are ready to state the main result of this section.
\begin{theorem}\label{gen-tdi}
Let $M\in\Z^{m\times n}$ and $b\in\Z^m$ where $Mx\leq b$ is non-degenerate.
Let $M^=x=b^=$ denote the implicit equalities of $Mx\leq b$.
Then $Mx\leq b$ is TDI if and only if both the following conditions hold:
\begin{enumerate}[\;i.]
\item The rows of $M^=$ form a Hilbert Cone;
\item $Mx\leq b$ is resilient.
\end{enumerate}
\end{theorem}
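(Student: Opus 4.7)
The plan is to establish both directions via \Cref{main-char} and \Cref{key-sufficient}, bootstrapping the Hilbert Cone hypothesis to $\cL_p$-GSCs for every prime~$p$.

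Sufficiency. Assume (i) and (ii). First I would observe that a Hilbert Cone is automatically an $\cL_p$-GSC for every prime $p$: given $v\in\cL_p^n\cap\cone(\text{rows of }M^=)$, multiply by a suitable power of $p$ to land in $\Z^n$, apply the Hilbert Cone hypothesis integrally, then divide back (the coefficients stay in $\cL_p$ by closure under $p$-division). Resiliency is $\frac{1}{1}$-resiliency, so \Cref{key-sufficient} with $\cL=\cL_p$ and parameter $1$ (the closure-under-$q$-division hypothesis is vacuous) yields that $Mx\leq b$ is TD in $\cL_p$ for every prime $p$, hence near-TDI. Combined with non-degeneracy, \Cref{non-deg-near} delivers TDI.

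Necessity. Assume $Mx\leq b$ is TDI, hence near-TDI (via \Cref{restricted-w}), so \Cref{main-char} applies with $\cL=\cL_p$ for every prime $p$. Condition~(i) follows directly from LP duality: for $w\in\Z^n\cap\cone(\text{rows of }M^=)$, $w^\top x$ is constant on $Q$, so all of $Q$ is the optimal face of $(P\!:\!M,b,w)$; an integer dual optimum (by TDI) must then, by complementary slackness, be supported on the implicit equalities, exhibiting $w$ as a non-negative integer combination of the rows of $M^=$. For~(ii), integrality of $Q$ is standard, and the substantive task is to show $Q':=Q\cap\{x:\row_i(M)x\leq b_i-1\}$ is integral for each $i$. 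The idea is to analyze the tilt at a vertex $u$ of $Q$ with tight-constraint set $J$ (so $|J|=n$ with linearly independent rows $\row_j(M)$, $j\in J$, by non-degeneracy): for any $w\in\Z^n$ in the relative interior of $N(u):=\cone(\row_j(M):j\in J)$, and an edge $E$ at $u$ obtained by relaxing some $k\in J$ (with $\rho$ an integer point in $\aff(E)\setminus\{u\}$), the $(w,\{u\},E)$-tilt simplifies to the single equation $u_k=y_k$, where $y_k$ is the coefficient of $\row_k(M)$ in the unique expansion $w=\sum_{j\in J}y_j\row_j(M)$. Applying \Cref{main-char} across every prime $p$, using $\bigcap_p\cL_p=\Z$ and a perturbation to cover boundary $w\in N(u)\cap\Z^n$, forces $y_k\in\Z$; thus $\{\row_j(M):j\in J\}$ is a Hilbert basis of the simplicial cone $N(u)$, so $M_J$ is unimodular. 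Applying this at any vertex $u$ on the facet $\row_i(M)x=b_i$ (so $i\in J$), the primitive integer edge direction $d^i:=-M_J^{-1}e_i\in\Z^n$ (where $e_i$ is the standard basis vector indexed by $J$) satisfies $\row_i(M)d^i=-1$, so $u+d^i$ is an integer point on the shifted hyperplane and a new vertex of $Q'$. Every new vertex of $Q'$ on that hyperplane arises this way, and retained vertices of $Q$ are already integer; hence $Q'$ is integral.

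Main obstacle. The hardest piece is the necessity of~(ii) outside the full-dimensional bounded setting: when implicit equalities reduce the ambient dimension one should reinterpret ``vertex'' as a minimal face within $\aff(Q)$, and for unbounded edges the integer $\rho$ must be obtained from a primitive ray direction rather than a second endpoint. Once this scaffolding is in place, the tilt calculation, the Hilbert-basis reduction via $\bigcap_p\cL_p=\Z$, and the passage ``simplicial Hilbert basis $\Rightarrow$ unimodular'' are routine.
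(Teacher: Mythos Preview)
Your sufficiency argument and your necessity argument for~(i) coincide with the paper's (\Cref{pr-suf} and the paragraph preceding \Cref{pr-nec}). For the necessity of~(ii), however, you take a genuinely different route. The paper does not go back through \Cref{main-char} at all: it invokes the Giles--Pulleyblank characterization directly to conclude that at every minimal face $F$ of $Q$ the tight rows already form a Hilbert basis, upgrades this to a $\Z$-GSS on the rows and then on the columns via \Cref{GSC-GSS} and \Cref{GSS}, and finally uses the ``mates'' \Cref{mates} to identify each new minimal face $F'$ of the shifted polyhedron with a system $Nx=f'$ that has an integral solution. Your approach instead feeds TDI back into \Cref{main-char} for every prime $p$, reads off $y_k\in\cL_p$ from the (one-variable) tilt equation at a simple vertex, intersects over all primes to force $y_k\in\Z$, and deduces unimodularity of $M_J$. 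This is correct (your perturbation-from-the-interior argument really does pin down every column of $M_J^{-1}$ as integral), but it is a detour: the Hilbert-basis property you recover is exactly what Giles--Pulleyblank already hands you, and in the simplicial (non-degenerate) case Hilbert basis $\Leftrightarrow$ unimodular is immediate. The paper's column-$\Z$-GSS argument also handles implicit equalities and lineality uniformly, whereas your plan to ``reinterpret vertex as minimal face'' still has real work left in it.

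One step you assert but do not justify is ``every new vertex of $Q'$ on the shifted hyperplane arises this way''. This is exactly \Cref{mates} in the paper: a new minimal face of $Q'$ sits on a $(\dim F+1)$-dimensional face $L$ of $Q$ sliced by the shifted hyperplane, and the endpoint of $L$ on the original facet is its mate. The proof is short but not automatic---one decomposes through the lineality space and uses integrality of $Q$ together with the empty integer slab $\{x:b_i-1<\row_i(M)x<b_i\}$ to pin the mate onto the original facet---so you should supply it rather than fold it into ``routine''.
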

\noindent
Observe that when $\{x \in \R^n : Mx\leq b\}$ is full-dimensional there are no implicit constraints and (i) holds trivially.
It follows that \Cref{intro-tdi} is an immediate consequence of \Cref{gen-tdi}.
In \S\ref{sec-suf} we show that (i) and (ii) imply that $Mx\leq b$ is TDI.
Finally, in \S\ref{sec-nec} we show that $Mx\leq b$ TDI implies that (i) and (ii) hold.
\subsection{Sufficiency for TDI} \label{sec-suf}
Restricting \Cref{key-sufficient} to the case $p=1$ yields,
\begin{CO}\label{key-sufficient-p1}
Let $M\in\Z^{m\times n}$ and $b\in\Z^m$ and let $\cL\subseteq\R$ where $\cL$ is a heavy set.
Let $M^=x= b^=$ denote the implicit equalities of $Mx\leq b$.
Then $Mx\leq b$ is TD in $\cL$ if the following conditions hold,
\begin{enumerate}[\;i.]
\item The rows of $M^=$ form an $\cL$-GSC;
\item $Mx\leq b$ is resilient.
\end{enumerate}
\end{CO}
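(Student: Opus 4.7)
The plan is to derive this corollary as a direct specialization of \Cref{key-sufficient} with $p=1$. The corollary's hypothesis (i) is identical to hypothesis (i) of the theorem, and its hypothesis (ii) matches hypothesis (ii) of the theorem: by the definition of $\frac{1}{p}$-resiliency given earlier, a system is resilient exactly when it is $\frac{1}{1}$-resilient, since in that case each $s(i) \in [1] = \{1\}$ forces $s(i)=1$, recovering the original definition of resilient from the introduction.

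The only remaining thing to check is that the extra condition on $\cL$ in \Cref{key-sufficient}, namely that $\cL$ is closed under $q$-divisions for all $q \in \{2, \ldots, p\}$, is automatically satisfied when $p = 1$. In that case the index set $\{2, \ldots, 1\}$ is empty, so the condition is vacuously true for every heavy set $\cL$. Thus, invoking \Cref{key-sufficient} with $p=1$ immediately yields that $Mx \leq b$ is TD in $\cL$. There is no substantive obstacle here; the only care needed is to confirm these two bookkeeping points about the definitions, after which the result is a one-line consequence of the more general theorem already established.
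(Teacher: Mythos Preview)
Your proposal is correct and matches the paper's own approach exactly: the paper simply states that restricting \Cref{key-sufficient} to the case $p=1$ yields \Cref{key-sufficient-p1}, and your two bookkeeping checks (that resilient $=\frac{1}{1}$-resilient and that the closure condition on $\cL$ is vacuous for $p=1$) are precisely what makes that restriction valid.
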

\noindent
We can now prove the promised result.
\begin{PR}\label{pr-suf}
In \Cref{gen-tdi} conditions (i) and (ii) imply $Mx\leq b$ is TDI.
\end{PR}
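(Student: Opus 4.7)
The plan is to combine \Cref{key-sufficient-p1} with \Cref{non-deg-near}. A direct application of \Cref{key-sufficient-p1} with $\cL=\Z$ is not available, since $\Z$ is not a heavy set (it fails to be dense in $\R$). Instead, the strategy is to show that $Mx\le b$ is TD in $\cL_p$ for every prime $p$, i.e., that $Mx\le b$ is near-TDI, and then invoke the non-degeneracy hypothesis via \Cref{non-deg-near} to conclude TDI.

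The first step I would carry out is to observe that if the rows of $M^=$ form a Hilbert Cone, then they also form an $\cL_p$-GSC for every prime $p$. Letting $a^1,\ldots,a^r$ denote these rows, and given $v$ in the conic hull of $\{a^1,\ldots,a^r\}$ intersected with $\cL_p^n$, there exists $k\ge 0$ with $p^k v\in\Z^n$. Then $p^k v$ lies in the cone generated by the $a^i$ and is integral, so the Hilbert Cone property yields $\lambda_i\in\Z_+$ with $p^k v=\sum_i \lambda_i a^i$; dividing by $p^k$ expresses $v=\sum_i (\lambda_i/p^k)a^i$ as a conic combination with multipliers in $\cL_p\cap\R_+$, as required.

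With this in hand, for every prime $p$, condition (i) of \Cref{key-sufficient-p1} holds for $\cL=\cL_p$, while condition (ii) is exactly the resiliency hypothesis of \Cref{gen-tdi}. Hence \Cref{key-sufficient-p1} implies $Mx\le b$ is TD in $\cL_p$ for every prime $p$, so $Mx\le b$ is near-TDI. Combining this with non-degeneracy and \Cref{non-deg-near} gives the desired conclusion that $Mx\le b$ is TDI.

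I do not expect a serious obstacle in this plan: the substantive work is already packaged inside \Cref{key-sufficient-p1} (which delivers TD in a single heavy set) and \Cref{non-deg-near} (which passes from near-TDI to TDI in the non-degenerate case). The only new ingredient is the elementary scaling argument upgrading a Hilbert Cone to an $\cL_p$-GSC.
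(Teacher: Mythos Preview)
Your proposal is correct and follows essentially the same route as the paper: upgrade the Hilbert Cone to an $\cL_p$-GSC via the $p^k$-scaling argument, apply \Cref{key-sufficient-p1} to get TD in $\cL_p$ for every prime $p$, and then invoke \Cref{non-deg-near} to pass from near-TDI to TDI under non-degeneracy.
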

\begin{proof}
Let $p$ be an arbitrary prime and pick $w\in\cL_p^n$ that is a conic combination of the rows in $M^=$.
Then for some integer $k\geq 0$, $p^k w\in\Z^n$.
It follows by (i) in \Cref{gen-tdi} that $p^k w$ can be expressed as an integer conic combination of the rows in $M^=$.
Hence, $w$ can be expressed as a $p$-adic conic combination of the rows in $M^=$.
In particular, (i) holds in \Cref{key-sufficient-p1} for $\cL=\cL_p$.
We can thus apply \Cref{key-sufficient-p1} and deduce that $Mx\leq b$ TD in $\cL_p$.
As $p$ was an arbitrary prime, $Mx\leq b$ is near-TDI and the result follows from \Cref{non-deg-near}.
\end{proof}
\subsection{Necessity for TDI} \label{sec-nec}
A system $Mx\leq b$ is TDI if for every face $F$, the left hand side of the tight constraints for $F$ form a Hilbert Cone~\cite{GP79}.
It follows that condition (i) in \Cref{gen-tdi} is necessary for $Mx\leq b$ to be TDI.
The goal of this section is to prove the next result (thereby completing the proof of \Cref{gen-tdi}).
\begin{PR}\label{pr-nec}
In \Cref{gen-tdi} $Mx\leq b$ TDI implies $Mx\leq b$ is resilient.
\end{PR}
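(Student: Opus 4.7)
The plan is to show directly that if $Mx\leq b$ is non-degenerate and TDI then it is resilient. Integrality of $Q$ itself is the classical Edmonds--Giles theorem, so it remains to show each polyhedron $Q_i:=Q\cap\{x:\row_i(M)x\leq b_i-1\}$ is integral. If $i$ indexes an implicit equality then $\row_i(M)x=b_i$ on $Q$ and $Q_i=\emptyset$; thus fix $i^*$ that is not an implicit equality and suppose for contradiction that $v$ is a non-integer vertex of $Q_{i^*}$.

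Since $Q$ is integral, $v$ is not a vertex of $Q$, so $v$ lies in the relative interior of a unique face $F^+$ of $Q$ with $\dim F^+\geq 1$. The tight constraints at $v$ in $Q_{i^*}$ are $I_{M,b}(F^+)\cup\{i^*\}$. Non-degeneracy applied at a vertex of $Q$ in the closure of $F^+$ gives that the rows $\{\row_j(M):j\in I_{M,b}(F^+)\}$ are linearly independent with cardinality $n-\dim F^+$, and $\row_{i^*}(M)$ lies outside their span (otherwise $\row_{i^*}(M)x=b_{i^*}$ on $F^+$, contradicting $\row_{i^*}(M)v=b_{i^*}-1$). For $v$ to be a vertex of $Q_{i^*}$ the combined rows must have rank $n$, which forces $\dim F^+=1$. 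Hence $F^+$ is an edge with integer endpoints $u_+,u_-$ satisfying $\row_{i^*}(M)u_+=b_{i^*}$ and $\row_{i^*}(M)u_-=b_{i^*}-s$ for some integer $s\geq 2$, and $v=u_-+\tfrac{s-1}{s}d$ with $d:=u_+-u_-\in\Z^n$. In particular, $v$ is integer if and only if $s\mid\gcd(d_1,\ldots,d_n)$.

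The decisive step is to use TDI at $u_+$ to force $s=\gcd(d_1,\ldots,d_n)$. By non-degeneracy, for every $w$ in the interior of $\cone(\{\row_j(M):j\in I_{M,b}(u_+)\})$ the unique dual-feasible $y$ supported on $I_{M,b}(u_+)$ is the unique dual optimum of $(D\!:\!M,b,w)$; TDI forces this $y$ to be integer whenever $w\in\Z^n$, so the $n\times n$ integer matrix $A_+$ with rows $\{\row_j(M):j\in I_{M,b}(u_+)\}$ must satisfy $|\det A_+|=1$. A cofactor expansion along the row $\row_{i^*}(M)$ gives
\[
|\det A_+|\;=\;|\row_{i^*}(M)\cdot\eta|,
\]
where $\eta\in\Z^n$ is the signed vector of $(n-1)\times(n-1)$ minors of the submatrix $B$ indexed by $I_{M,b}(F^+)$. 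The classical identity that a matrix with a repeated row has zero determinant shows $B\eta=0$; hence $\eta=g\cdot d_0$ where $d_0\in\Z^n$ is the primitive generator of the one-dimensional $\ker B$ and $g$ is a positive integer. Therefore $|\det A_+|=g\cdot|\row_{i^*}(M)\cdot d_0|=1$, forcing $g=|\row_{i^*}(M)\cdot d_0|=1$. Since $d\in\ker B$, we may write $d=c\cdot d_0$ with $c>0$ and $c=\gcd(d_1,\ldots,d_n)$; then $s=\row_{i^*}(M)\cdot d=\pm c$ gives $s=\gcd(d_1,\ldots,d_n)$, contradicting $v\notin\Z^n$.

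The main obstacle is the cofactor identity that converts the unimodularity $|\det A_+|=1$ (itself forced by TDI at the vertex $u_+$) into the exact divisibility $s\mid\gcd(d)$ needed for $v$ to be integer. The remaining cases are routine: an unbounded edge uses the primitive integer recession direction in place of $d_0$ and a single integer vertex in place of $\{u_+,u_-\}$, and the non-full-dimensional case reduces to the above by working within the affine hull of $Q$.
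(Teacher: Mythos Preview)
Your argument is correct in the pointed, full-dimensional case, and it follows a genuinely different route from the paper. The paper proves the proposition via the machinery of $\Z$-generating sets: for each minimal face $F'$ of the shifted polyhedron it locates a \emph{mate} minimal face $F$ of $Q$ with the same tight-constraint index set (\Cref{mates}), uses TDI to conclude that the rows of the tight submatrix $N$ form a Hilbert basis (hence a $\Z$-GSC, hence a $\Z$-GSS by \Cref{GSC-GSS}), transfers the $\Z$-GSS property from rows to columns via \Cref{GSS}, and deduces that $Nx=f'$ has an integer solution. You instead exploit the classical fact (essentially Gerards--Seb\H{o}~\cite{GerardsSebo1987}) that TDI plus non-degeneracy forces the square tight-constraint matrix $A_+$ at a vertex to be unimodular, and then a cofactor expansion along the shifted row turns $|\det A_+|=1$ into $|\row_{i^*}(M)\cdot d_0|=1$ for the primitive edge direction~$d_0$, which is exactly the divisibility needed to make the shifted vertex integral. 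Your approach is more elementary and geometrically transparent; the paper's has the advantage that it never needs $A_+$ to be square, so it treats the non-pointed case uniformly.

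That last point is where your write-up is incomplete. Your closing sentence handles the non-full-dimensional case (implicit equalities and the caveat in the definition of non-degenerate), but pointedness is an orthogonal issue: a full-dimensional $Q$ can still have nontrivial lineality space, in which case minimal faces are not vertices, $A_+$ is $(n-d)\times n$ rather than $n\times n$, and your determinant/cofactor step no longer makes sense as written. ``Working within the affine hull of $Q$'' does not fix this. To cover the non-pointed case you would either have to invoke the column-$\Z$-GSS property (as the paper does) or quotient by the lineality space---but integrality does not transfer naively under that quotient, and making it work essentially re-derives the GSS argument.
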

\noindent
To that end, we first require a definition.
Let $M\in\Z^{m\times n}$ and let $b,b'\in\Z^m$.
Let $F$ and $F'$ be faces of $Mx\leq b$ and of $Mx\leq b'$, respectively.
We say that $F$ and $F'$ are {\em mates} if $I_{M,b}(F)=I_{M,b'}(F')$, i.e. the index sets of the tight constraints for $F$ and $F'$ are the same.
We denote by $e^i$ the vector with $e^i_i=1$ and $e^i_j=0$ for all $j\neq i$.
For a polyhedron $Q := \{x \in \R^n : Mx\leq b\}$ the {\em lineality space} of $Q$ is denoted, $\lin(Q)$.
\begin{PR}\label{mates}
Let $M\in\Z^{m\times n}$ and let $b,b'\in\Z^m$ where $b'=b-e^i$ for some $i\in[m]$.
Suppose that $P:= \{x \in \R^n : Mx\leq b\}$ is integral and $Mx\leq b$ is non-degenerate.
Then, every minimal face $F'$ of $Mx\leq b'$ that is not a face of $Mx\leq b$ has a mate $F$ of $Mx\leq b$.
\end{PR}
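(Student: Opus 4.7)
The plan is to explicitly construct the mate $F$ as a face of $P$ and verify that its set of tight constraints equals $J := I_{M,b'}(F')$; non-degeneracy of $Mx\leq b$ will only be used at the very last step.

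First, I would argue $i\in J$. Since $F'$ is a minimal face of $P'$, it coincides with its affine hull $\{x\in\R^n:\row_j(M)x=b'_j,\ j\in J\}$. If $i\notin J$ then $b'_j=b_j$ for every $j\in J$, whence this affine hull lies entirely inside $P$ and equals $\{x\in P:\row_j(M)x=b_j,\ j\in J\}$, exhibiting $F'$ as a face of $P$ and contradicting the hypothesis.

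Next, I would build the mate in two moves. Consider the face $G:=\{x\in P:\row_j(M)x=b_j,\ j\in J\setminus\{i\}\}$ of $P$; it contains $F'$ and, because $P$ is integral, so is $G$. The integer $\kappa:=\max\{\row_i(M)x:x\in G\}$ satisfies $b_i-1\leq\kappa\leq b_i$ (the lower bound from any point of $F'\subseteq G$, the upper from $Mx\leq b$). If $\kappa=b_i-1$, then $F'=\{x\in G:\row_i(M)x=b_i-1\}$ would be the face of $G$ on which $\row_i(M)$ attains its maximum, and hence a face of $P$, contradicting the hypothesis. So $\kappa=b_i$ and I set $F:=\{x\in G:\row_i(M)x=b_i\}$, a non-empty face of $P$ with $I_{M,b}(F)\supseteq J$.

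The main obstacle is the reverse inclusion $I_{M,b}(F)\subseteq J$. Suppose, toward contradiction, some $k\in I_{M,b}(F)\setminus J$, and let $F^*$ be a minimal face of $P$ contained in $F$. Because $\lin(P)=\lin(P')$, the minimal faces $F'$ (of $P'$) and $F^*$ (of $P$) have the same dimension, so $\{\row_j(M):j\in J\}$ and the tight rows at $F^*$ span the same subspace, namely $\lin(P)^\perp$. Non-degeneracy of $Mx\leq b$ provides a caveat-reduced linearly independent subset $I^*\subseteq I_{M,b}(F^*)$; a dimension count together with the span equality then forces every index of $I^*$ to be accounted for---possibly through its pair partner---by an index of $J$. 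Applied to the extra index $k$, this produces some $k^{**}\in J$ with $\row_{k^{**}}(M)=-\row_k(M)$ and $b_{k^{**}}=-b_k$. Evaluating on any $x\in F'$ now yields an immediate contradiction: if $k^{**}\neq i$, the tight equality $\row_{k^{**}}(M)x=b_{k^{**}}$ forces $\row_k(M)x=b_k=b'_k$, placing $k$ in $J$; if $k^{**}=i$, the equality $\row_i(M)x=b_i-1$ forces $\row_k(M)x=b'_k+1$, violating $F'\subseteq P'$. Either way the assumption $k\in I_{M,b}(F)\setminus J$ is untenable, and $F$ is a mate of $F'$.
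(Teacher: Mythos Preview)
Your argument is correct and reaches the same mate $F$ as the paper, but by a genuinely different route. The paper works through the pointed decomposition $P=R+\lin(P)$: it identifies the $(d{+}1)$-dimensional face $L$ of $P$ with $F'=L\cap\{x:\row_i(M)x=b_i-1\}$, looks at the one-dimensional face $L_q\subseteq R$ underlying $L$, locates the endpoint $v_q$ on the side $\row_i(M)x>b_i-1$, and uses integrality plus the ``no integer strictly between $b_i-1$ and $b_i$'' claim to force $\row_i(M)v_q=b_i$; the mate is $F:=\{v_q\}+\lin(P)$. You avoid the pointed decomposition entirely: your face $G$ (which in fact coincides with the paper's $L$, since $I_{M,b}(L)=J\setminus\{i\}$) is built straight from the index set $J\setminus\{i\}$, and $F$ is its optimal subface for $\row_i(M)$. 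What your approach buys is a much more explicit handling of the reverse inclusion $I_{M,b}(F)\subseteq J$. The paper's closing sentence (``$F$ is defined by the tight constraints of $L$ and $\alpha^\top x=\beta$, \ldots, $F$ is the mate of $F'$'') tacitly assumes this inclusion, and it \emph{can} fail without non-degeneracy (e.g.\ add a redundant constraint $x_1\le 2$ to the triangle $x_1+x_2\le 2,\,x\ge\0$ and shift $x_1+x_2\le 2$ down by one); your span/dimension argument through the caveat-reduced basis $I^*$ makes the role of non-degeneracy transparent. One point worth saying more carefully: when you write ``applied to the extra index $k$'', note that the caveat-reduction may be chosen so that $k\in I^*$ (prefer $k$ over its pair partner), after which surjectivity of $J\to I^*$ gives $k^{**}\in J$ with $\row_{k^{**}}(M)=\pm\row_k(M)$; the $+$ sign with $k\neq k^{**}$ would itself violate non-degeneracy (identical tight rows), leaving only the $-$ sign as you claim.
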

\begin{proof}
Let $P'=\{x \in \R^n : Mx\leq b'\}$. Note that $\lin(P)=\{x \in \R^n : Mx=\0\}=\lin(P')$.
There exist pointed polyhedra $R$ and $R'$ with the property that $P=R+\lin(P)$ and $P'=R'+\lin(P')$.
Thus, minimal faces of $P$ and $P'$ have the same dimension $d:=\dim(\{x \in \R^n : Mx=\0\})$.
Let $\alpha:=\row_i(M)$ and let $\beta:=b_i$. Note, that $\alpha\in\Z^n$ and $\beta\in\Z$.
\begin{claim*}
$S:=\{x \in \R^n : \beta-1<\alpha^{\top} x<\beta\}\cap \Z^n=\emptyset$.
\end{claim*}
\begin{cproof}
If $\bar{x}\in S \cap \Z^n$ then $\alpha^\top \bar{x}\in\Z$, but $\beta-1<\alpha^{\top}\bar{x}<\beta$, a contradiction.
\end{cproof}
\noindent
Let $F'$ be a minimal face of $P'$ that is not a face of $P$. We will show that $F'$ has a mate $F$ of $Mx\leq b$.
Since $F'$ is not a face of $P$, it is the intersection of a face $L$ of dimension $d+1$ of $P$ and the hyperplane $\{x \in \R^n : \alpha^{\top} x=\beta-1\}$.
Hence, $L\cap\{x \in \R^n : \alpha^{\top} x<\beta-1\}$ and $L\cap\{x \in \R^n : \alpha^{\top} x>\beta-1\}$ are non-empty.
$L$ corresponds to a face $L_q$ of dimension $1$ of $R$ where $L=L_q+\lin(P)$.
Then $L_q\cap\{x \in \R^n : \alpha^{\top} x<\beta-1\}$ and $L_q\cap\{x \in \R^n : \alpha^{\top} x>\beta-1\}$ are non-empty.
Since $\alpha^{\top} x\leq\beta$ is not a redundant constraint of $Mx\leq b$, it follows that $L_q$ is a line segment or a half-line with an end $v_q$ where $\alpha^{\top} v_q>\beta-1$.
Then $v_q$ is an extreme point of $R$, hence $F:=\{v_q\}+\lin(P)$ is a minimal face of $P$.
Since $P$ is integral $F$ contains an integral point $v$ in $G$. 
It follows from the Claim that $\alpha^{\top} v=\beta$.
Thus, $F$ is defined by the tight constraints of $L$ and $\alpha^{\top} x=\beta$.
Since $F'$ is defined by the  tight constraints of $L$ and $\alpha^{\top} x=\beta-1$, $F$ is the mate of $F'$.
\end{proof}
\noindent
Consider a set $S=\{a^1,\ldots,a^m\}$ of vectors in $\Z^n$.
$S$ is an {\em $\cL$-generating set for a subspace} ($\cL$-GSS) 
if every vector in the intersection of the linear hull of the vectors in $S$ and $\cL^n$ can be expressed as a linear combination of the vectors in $S$ with multipliers in~$\cL$.
\begin{proof}[\Cref{pr-nec}]
Since $b$ is integral and $Mx\leq b$ is TDI, $\{x \in \R^n : Mx\leq b\}$ is an integral polyhedron.
Now pick $i\in[m]$ and let $b' := b-e^i$.
We need to show that $\{x \in \R^n : Mx\leq b'\}$ is an integral polyhedron.
Consider an arbitrary minimal face $F'$.
By \Cref{mates}, we may assume $\{x \in \R^n : Mx\leq b\}$ has a minimal face $F$ that is a mate of $F'$.
Denote by $Nx=f$ the tight constraints of $Mx\leq b$ for $F$.
Since $Mx\leq b$ is TDI, the rows of $N$ form a Hilbert basis~\cite{GP79}, or equivalently, a $\Z$-GSC.
It is proved in \cite{abdi23b}, that every $\Z$-GSC is an $\Z$-GSS and that if the rows of a matrix form a $\Z$-GSS, then so do the columns
(see \Cref{GSC-GSS} and \Cref{GSS} in \S\ref{sec-gen-proof} for details).
It follows that the columns of $N$ form a $\Z$-GSS.
Minimal faces are affine spaces.
Thus $F'=\{x \in \R^n : Nx=f'\}$ where $Nx=f'$ denotes the tight constraints for $F'$ of $Mx\leq b'$.
Since $b'\in\Z^m$, and since the columns of $N$ form a $\Z$-GSS, there exists an integral solution $x'$ to $Nx=f'$.
Then $x'$ is an integral point of $F'$. 
We proved that every minimal face of $\{x \in \R^n : Mx\leq b'\}$ has an integral point, i.e., that it is an integral polyhedron as required.
\end{proof}
\section{The Dyadic Conjecture}\label{sec-seymour}
Define $\zP:=\{0\}\cup\{2^i:i \in \Z_+\}$. We prove,
\begin{theorem}\label{main-inter-n}
Let $\zC$ be an ideal clutter. If for all $S\in\zC$ and $B\in b(\zC)$ we have $|S\cap B|-1\in\zP$ then $T(\zC)x\geq\1,x\geq\0$ is TDD.
\end{theorem}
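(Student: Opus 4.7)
The plan is to apply \Cref{main-char} with $\cL=\cL_2$ via the brace machinery of \Cref{use-brace-for-tilt}. Encoding $T(\zC)x\geq\1,\,x\geq\0$ as $Mx\leq b$, the polyhedron $Q:=\{x:Mx\leq b\}$ is full-dimensional (e.g.\ $2\cdot\1$ is interior) and integral by idealness of $\zC$, so condition (i) of \Cref{main-char} is vacuous and \Cref{restricted-w2} reduces the task to showing, for every admissible $w\in\Z^{E(\zC)}$ with optimal face $F$ and down-face $F^+$ of $F$, that the $(w,F,F^+)$-tilt constraint admits a dyadic solution. Since $\cL_2$ is closed under $p$-division whenever $p=2^k$, \Cref{use-brace-for-tilt} in turn reduces this to exhibiting an $(F,F^+)$-brace whose gap is a power of $2$ (including $2^0=1$). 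The structure I exploit is that $Q$ is pointed with vertex set $\{\chi^B:B\in b(\zC)\}$ and recession cone $\R^{E(\zC)}_+$ with extreme rays $\{e_e:e\in E(\zC)\}$; every face of $Q$ inherits this structure. I split into two cases.

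\emph{Case A: $F^+$ has a vertex $\chi^{B'}$ with $\chi^{B'}\notin F$.} Set $\rho:=\chi^{B'}$; since $F=Q\cap\aff(F)$, this $\rho$ is an integer point of $\aff(F^+)\setminus\aff(F)$. Because $\chi^{B'}\notin F$, some $\ii\in I_{M,b}(F)$ is not tight at $\chi^{B'}$, and hence $\ii\in I_{M,b}(F)\setminus I_{M,b}(F^+)$. If $\ii$ corresponds to $x_e\geq 0$, the gap equals $(\chi^{B'})_e=1$; if $\ii$ corresponds to $\chi^S\cdot x\geq 1$, the gap equals $|S\cap B'|-1$, which is strictly positive and, by the hypothesis $|S\cap B'|-1\in\zP$, a power of $2$.

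\emph{Case B: every vertex of $F^+$ lies in $F$.} Then $F$ and $F^+$ share the same vertex set, so $F\subsetneq F^+$ forces $\rec(F)\subsetneq\rec(F^+)$. Since both are faces of $\R^{E(\zC)}_+=\rec(Q)$, I pick $e\in E(\zC)$ with $e_e\in\rec(F^+)\setminus\rec(F)$, pick any vertex $\chi^B$ of $F$, and set $\rho:=\chi^B+e_e\in\Z^{E(\zC)}\cap(\aff(F^+)\setminus\aff(F))$. Because $e_e\notin\rec(F)$, some $\ii\in I_{M,b}(F)$ satisfies $\row_\ii(M)e_e=-1$, and this $\ii$ cannot lie in $I_{M,b}(F^+)$ since $e_e\in\rec(F^+)$ forces $\row_\ii(M)e_e=0$ on $I_{M,b}(F^+)$. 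The gap at $\rho$ is then $|b_\ii-\row_\ii(M)(\chi^B+e_e)|=1$.

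Both cases produce an $(F,F^+)$-brace with gap equal to a power of $2$, so \Cref{use-brace-for-tilt} and \Cref{main-char} conclude that the system is TDD. I expect the main obstacle to be Case B: degeneracy may make $I_{M,b}(F)\setminus I_{M,b}(F^+)$ large, and one must pinpoint a single $\ii$ that is simultaneously tight on $F$, not tight on $F^+$, and accounts for the failure of $e_e$ to be a recession direction of $F$.
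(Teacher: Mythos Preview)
Your proof is correct and follows essentially the same route as the paper: both reduce to \Cref{main-char} via \Cref{use-brace-for-tilt}, split into the same two cases (a vertex of $F^+$ lies outside $F$, versus $F^+$ gaining only a new recession direction), and in each case produce an $(F,F^+)$-brace whose gap is a power of~$2$. Your Case~A is in fact slightly cleaner than the paper's Case~1, since by selecting $\ii$ as a constraint of $I_{M,b}(F)$ that is explicitly not tight at $\chi^{B'}$ you guarantee a positive gap immediately and sidestep the paper's detour through an auxiliary point~$\rho''$ in the nonnegativity subcase.
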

\noindent
A clutter $\zC$ is said to be {\em binary} if for every $S\in\zC$ and every $B\in b(\zC)$ we have $|S\cap B|$ odd.
See \cite{seymour1987} for a characterization. Observe that \Cref{main-inter-n} implies,
\begin{CO}\label{dyadic-co}
Let $\zC$ be an ideal clutter. 
Then $T(\zC)x\geq\1,x\geq\0$ is TDD if either
(a) for all $S\in\zC$ and $B\in b(\zC)$, we have $|S\cap B|\leq 3$, or
(b) $\zC$ is a binary clutter and for all $S\in\zC$ and $B\in b(\zC)$, we have $|S\cap B|\leq 5$.
\end{CO}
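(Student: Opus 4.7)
The plan is to apply \Cref{main-char} with $\cL=\cL_2$ together with \Cref{use-brace-for-tilt}: since the dyadic rationals $\cL_2$ are closed under division by $2^k$ for every $k\ge 0$, it suffices to exhibit, for every relevant pair $(F,F^+)$, an $(F,F^+)$-brace whose gap is a power of $2$. Rewriting the system $T(\zC)x\ge\1,\,x\ge\0$ in the form $Mx\le b$ (with rows $-\chi^S\cdot x\le -1$ for $S\in\zC$ and $-x_e\le 0$ for $e\in E(\zC)$), the polyhedron $P:=\{x\ge\0:T(\zC)x\ge\1\}$ is integral (by idealness of $\zC$), pointed (its lineality space is trivial), and full-dimensional (any sufficiently large positive vector is an interior point). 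Full-dimensionality kills condition~(i) of \Cref{main-char}, and by \Cref{restricted-w2} we only need to verify condition~(ii) for $w\in\Z^n$: fix such an admissible $w$ with optimal face $F$, and let $F^+$ be an arbitrary down-face of $F$.

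To construct the brace, I would use the pointed decomposition $F^+=\mathrm{conv}(V)+\cone(R)$, where the vertices of $F^+$ are of the form $\chi^B$ for $B\in b(\zC)$ (by idealness of $\zC$) and the extreme rays are standard unit vectors $e^j$ (since $\rec(P)=\R^n_+$). Because $F\subsetneq F^+$, one of two situations must occur: either (A) some vertex $\chi^B\in V$ lies outside $F$, in which case I take $\rho:=\chi^B$; or (B) $V\subseteq F$ but some extreme ray $e^j\in R$ is not a recession direction of $F$, in which case I take $\rho:=v+e^j$ for any $v\in V$. Either way $\rho\in F^+\cap\Z^n$ and $\rho\notin F$, so some $\ii\in I_{M,b}(F)\setminus I_{M,b}(F^+)$ is violated at $\rho$; the pair $(\ii,\rho)$ is the candidate brace. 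In case~(A), the gap is an entry of $\chi^B$ (hence $1$) when $\ii$ is a non-negativity constraint, and $|S\cap B|-1$ when $\ii$ is a row constraint indexed by $S$; the hypothesis $|S\cap B|-1\in\zP$ forces the latter to be a power of $2$ once positive. In case~(B), since $v\in F$, one has $\row_{\ii}(M)\rho=b_{\ii}+\row_{\ii}(M)_j$, and because all entries of $M$ lie in $\{-1,0\}$ the gap is $1$ whenever positive. Thus in every subcase the gap is $2^k$ for some $k\ge 0$, and \Cref{use-brace-for-tilt} together with \Cref{main-char} yields TDD of $T(\zC)x\ge\1,\,x\ge\0$.

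The main obstacle is the unbounded case~(B), in which all vertices of $F^+$ already lie inside $F$ and $\rho$ must be built from an extreme ray. One must verify that $\rho=v+e^j$ truly lies outside $F$---which follows from $e^j\notin\rec(F)$ together with the identity $F=F^+\cap\aff(F)$---and observe that any tight constraint of $F$ that is violated at $\rho$ has entry $-1$ at coordinate $j$ by virtue of the $\{-1,0\}$-structure of $M$. The hypothesis $|S\cap B|-1\in\zP$ itself is invoked only in case~(A) when $\ii$ is a row constraint---exactly the subcase for which it is tailored; all other subcases reduce to the gap being $1=2^0$.
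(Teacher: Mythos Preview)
Your proposal is correct and follows essentially the same route as the paper: the paper factors the corollary through \Cref{main-inter-n} (whose hypothesis $|S\cap B|-1\in\zP$ is immediately implied by each of (a) and (b)), and your argument is precisely the paper's proof of that theorem---same case split on whether $F^+$ has a vertex outside $F$, same use of \Cref{use-brace-for-tilt} with braces of power-of-two gap, and the same recession-cone treatment via $\rho=v+e^j$. Your choice of $\ii$ as a constraint strictly slack at $\rho$ is marginally cleaner than the paper's handling of the non-negativity subcase, but the content is identical.
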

The proof of \Cref{main-inter-n} relies on \Cref{main-char}.
We therefore need to restate the set covering linear program and its dual in the setting of the linear programs $(P\!:\!M,b,w)$ and $(D\!:\!M,b,w)$.
To that effect, we assume when we consider a clutter $\zC$ that the number of members of $\zC$ is $m$ and that the size of the ground set is $n$.
Recall that for a clutter $\zC$, $T(\zC)$ denotes the $0,1$ matrix with rows corresponding to characteristic vectors of $\zC$. 
Thus $T(\zC)$ is an $m\times n$ matrix. Define the matrix,
\[
M(\zC):=\left(\begin{array}{c} -T(\zC) \\ -I_n\end{array}\right),
\]
where $I_n$ denotes the $n\times n$ identity matrix.
In addition, we define vector $d(\zC)\in\{-1,0\}^{m+n}$ where $d(\zC)_i=-1$ for $i=1,\ldots,m$ and $d(\zC)_i=0$ for $i=m+1,\ldots,n$.
Then the primal-dual pair of set covering linear programs,
\begin{align*}
& \min\{w^\top x:T(\zC) x \geq\1,x\geq\0\} \\
& \max\{\1^\top y: T(\zC)^\top y\leq w,y\geq\0\},
\end{align*}
\noindent
can be expressed as $(P\!:\!M(\zC),d(\zC),-w)$ and $(D\!:\!M(\zC),d(\zC),-w)$ respectively.

We leave the following observation as an easy exercise.
\begin{RE}\label{TDD-reduction}
$T(\zC)x\geq\1,x\geq\0$ is TDD if and only if $M(\zC)x\leq d(\zC)$ is TDD.
\end{RE}
\noindent
Denote by $\rec(R)$ the recession cone of a polyhedron $R$.
A polyhedral cone $C\subseteq\R^n$ is {\em generated} by a set $S$ of vectors in $\R^n$ if $C$ is equal to the conic hull of the vectors in $S$.
We require the following observation.
\begin{RE}\label{easy-stuff}
Let $P$ be a polyhedron and suppose that $\rec(P)$ is generated by a finite set $S$ of vectors.
Let $P'$ be a face of $P$ then 
(a) $\rec(P')$ is a face of $\rec(P)$, and 
(b) $\rec(P')$ is generated by a subset $S'$ of $S$.
\end{RE}
\begin{proof}
Since $P$ is a polyhedron, then $P=\{x:Mx\leq b\}$ for some matrix $M$ and vector $b$.
{\bf (a)}
Let $I:=I_{M,b}(P')$, then $P'=P \cap\{x:\row_i(M)x=b_i, i\in I\}$.
We have $\rec(P)=\{x:Mx\leq\0\}$ \cite[Proposition 3.15]{CCZ2014}.
Then observe that $\rec(P')=\rec(P)\cap\{x:\row_i(M)x=0, i\in I\}$.
(b) 
$\rec(P')$ is generated by its extreme rays.
Observe that every extreme ray in the face of a polyhedral cone is an extreme ray in the polyhedral cone and use (a).
\end{proof}
We are now ready for the last proof of this section.
\begin{proof}[\Cref{main-inter-n}]
Recall that $\cL_2$ denotes the set of dyadic rationals.
Let $M:=M(\zC)$, $b:=d(\zC)$, $n=|E(\zC)|$ and $m= |\zC|$.
In light of \Cref{TDD-reduction} we need to prove that $Mx\leq b$ is TDD.
It suffices to prove that conditions (i) and (ii) in \Cref{main-char} hold.
Note, that (i) trivially holds as $\{x:Mx\leq b\}$ is full-dimensional.
Thus it suffices to prove (ii).
To that end, consider an admissible $w\in\cL_2^n$ of $(P\!:\!M,b,w)$, denote by $F$ the set of optimal solutions, and let $F^+$ be a down-face of $F$.
We need to show that the $(w,F,F^+)$-tilt constraint has a solution with variables in~$\cL_2$.
Because $\zC$ is ideal, $Q:=\{x:T(\zC)\geq\1,x\geq\0\}=\{x:Mx\leq b\}$ is integral. 
Observe that $\cL_2$ is closed under $2^k$ divisions for any $k\in\Z_+$.
Therefore, by \Cref{use-brace-for-tilt}, it suffices to show there exists an $(F,F^+)$-brace with gap $2^k$ for some $k\in\Z_+$.

\vspace{0.1in}\noindent{\bf Case 1.} $F^+\setminus F$ contains an extreme point $\rho$ of $Q$.

\vspace{0.1in}\noindent
Because $\zC$ is ideal, $\rho$ is the characteristic vector of a member $B$ of $b(\zC)$ \cite[Remark 1.16]{cornuejols2001}.
Let $\ii\in I_{M,b}(F)\setminus I_{M,b}(F^+)$.
Consider first the case where $\ii\in[m]$.
Constraint $\ii$ of $Mx\leq b$ says that $\sum_{i\in S}x_i\geq 1$ for some $S\in\zC$. Then
\[
b_{\ii}-\row_{\ii}(M)\rho=\sum_{i\in S}\rho_i - 1=|S\cap B|-1\in\zP, 
\]
where the membership follows by hypothesis. 
Hence, $(\ii,\rho)$ is the required $(F,F^+)$-brace and we may assume $\ii\in[m+n]\setminus[m]$.
Then constraint $\ii$ of $Mx\leq b$ is $x_{\ii-m}\geq 0$.
Therefore, there exists $\rho'\in F^+\setminus F$ with $\rho'_{\ii-m}>0$.
Since $\rec(Q)=\R^n_+$, it is generated by $S=\{e^1,\ldots,e^n\}$.
It follows from \Cref{easy-stuff} that $\rec(F^+)$ is generated by $S'\subseteq S$.
Therefore, $\rho'$ is obtained as the sum of a convex combination of extreme points of $F^+$ and a conic combination of $S'$.
As $Q$ is integral, so is $F^+$. 
It follows, that we must have $\rho''\in F^+\cap\Z^n$ with $\rho''_{\ii}=1$. Then 
\[
b_{\ii}-\row_{\ii}(M)\rho''=\rho''_{\ii-m}=1,
\]
and $(\ii,\rho'')$ is the required $(F,F^+)$-brace.

\vspace{0.15in}\noindent{\bf Case 2.} $F^+\setminus F$ contains no extreme point.

\vspace{0.1in}\noindent
Since we are in case 2 and $F^+\supset F$ we have $\rec(F^+)\supset\rec(F)$.
It follows from \Cref{easy-stuff} that there must be a generator $e^j$ in of $\rec(F^+)$ that is not a generator of $\rec(F)$.
Hence, for some integral point $v\in F$ we have $\rho:=v+e^j\in F^+\setminus F$.
Let $\ii$ denote the index of a constraint of $Mx\leq b$ that is tight for $v$ but not for $\rho$.
Then $(\ii,\rho)$ is again the required $(F,F^+)$-brace.
\end{proof}
\section{The proof of \Cref{main-char}}\label{sec-gen-proof}
In this section we give the proof of \Cref{main-char} (our characterization of systems of inequalities that are TD in $\cL$ for a heavy set $\cL$).
Along the way, we will also prove \Cref{ind-rep}.
This section is organized as follows:
in \S\ref{ax-density} we explain the role of density; 
\S\ref{ax-gen} reviews results on generating sets and cones;
we derive applications of complementary slackness in \S\ref{ax-affine};
a geometric interpretation of the tilt constraints and the proof of \Cref{ind-rep} is found in \S\ref{ax-geometry};
finally, in \S\ref{ax-proof} we present the proof of \Cref{main-char}.
\subsection{Density and a theorem of the alternative}\label{ax-density}
A key property of heavy sets is density. 
This allows us to use the following powerful result \cite{abdi23b,abdi23}.
\begin{theorem}\label{density}
Let $\cL$ be a heavy set. 
Consider a nonempty convex set $S$, where $\aff(S)$ is a translate of a rational subspace.
Then $S\cap\cL^n\neq\emptyset$ if and only if $\aff(S)\cap\cL^n\neq\emptyset$.
\end{theorem}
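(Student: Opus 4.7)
The forward direction ($S \cap \cL^n \ne \emptyset$ implies $\aff(S) \cap \cL^n \ne \emptyset$) is immediate since $S \subseteq \aff(S)$. The entire content is in the converse.

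My plan is to exhibit a dense subset $D$ of $\aff(S)$ that is contained in $\cL^n$, and then intersect $D$ with the relative interior of $S$. Concretely, start by picking any $v \in \aff(S) \cap \cL^n$ (guaranteed by hypothesis) and set $V := \aff(S) - v$. By assumption $V$ is a rational linear subspace of $\R^n$, so it admits a basis $w_1, \ldots, w_k$ that we may choose in $\Z^n$ (clear denominators). Define
\[
D := v + \Bigl\{\, \sum_{i=1}^k \ell_i w_i : \ell_1, \ldots, \ell_k \in \cL \,\Bigr\}.
\]

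The next step is to check the two required properties of $D$. First, $D \subseteq \aff(S) \cap \cL^n$: clearly $D \subseteq v + V = \aff(S)$, and because $\cL$ is a subgroup of $(\R, +)$ containing $\Z$, the product $\ell_i w_i$ lies in $\cL^n$ whenever $\ell_i \in \cL$ and $w_i \in \Z^n$ (coordinate-wise, $\ell_i \cdot (w_i)_j$ is an integer sum of copies of $\pm \ell_i$, hence in $\cL$); summing finitely many such vectors and adding $v \in \cL^n$ keeps us in $\cL^n$. Second, $D$ is dense in $\aff(S)$: since $\cL$ is dense in $\R$, the set of $\cL$-linear combinations of $w_1, \ldots, w_k$ is dense in their real span $V$, and translating by $v$ preserves density.

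Finally, invoke the standard fact that a nonempty convex set has nonempty relative interior inside its own affine hull. Thus $\text{relint}(S)$ is a nonempty subset of $\aff(S)$ that is open in the subspace topology of $\aff(S)$. By the density just established, $D \cap \text{relint}(S) \ne \emptyset$, and any such point lies in $S \cap \cL^n$, completing the proof.

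The main obstacle, such as it is, lies in the first property of $D$: verifying $D \subseteq \cL^n$ uses each of the three hypotheses defining a heavy set (subgroup structure, containment of $\Z$, and density is needed for the second property). The rational-subspace assumption is exactly what lets us pick $w_i \in \Z^n$; without it, one could only form real combinations, and $\cL$-coefficients would not keep the output in $\cL^n$.
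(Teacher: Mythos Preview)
The paper does not actually prove this theorem; it is quoted from \cite{abdi23b,abdi23} and used as a black box. Your argument is correct and is essentially the natural proof: pick $v\in\aff(S)\cap\cL^n$, use the rational-subspace hypothesis to choose an integral basis $w_1,\ldots,w_k$ of $V=\aff(S)-v$, observe that $D=v+\{\sum_i\ell_i w_i:\ell_i\in\cL\}\subseteq\cL^n$ (subgroup containing $\Z$) is dense in $\aff(S)$ (density of $\cL$ in $\R$, plus the fact that the linear isomorphism $\R^k\to V$ carries dense sets to dense sets), and then intersect with the nonempty relative interior of $S$. Each hypothesis on $\cL$ is used exactly once, and the rational-subspace assumption is what permits integral $w_i$; there is nothing to add.
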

\noindent
Based on this result, it is shown in \cite{abdi23} that the problems of checking if a polyhedron contains a dyadic, or $p$-adic]] can be answered in polynomial time.
Contrast this with the NP-hardness of checking if a polyhedron contains an integral point (see, for instance, \cite[Chapter 2]{AroraBarak}). 

We require the following theorem of the alternative from \cite[Theorem 2.7]{abdi23}.
\begin{theorem}\label{alternative}
Suppose $(\cL,+)$ forms a subgroup of $(\R,+)$ and $\cL \neq \R$. 
Let $A\in\Z^{m\times n}$ and let $b\in\Z^m$. \\
Then exactly one of the following holds,
\begin{enumerate}[\;a.]
\item $Ax=b$ has a solution in $\cL^n$, 
\item there exists $u\in\R^m$ such that $A^\top u\in\Z^n$ and $b^\top u\notin\cL$.
\end{enumerate}
\end{theorem}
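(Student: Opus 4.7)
The plan is to handle incompatibility of (a) and (b) by a quick inner-product manipulation, and then to address ``not (a) implies (b)'' by invoking the Hermite normal form to reduce to a square invertible integer system.

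For incompatibility, I would observe that if $x\in\cL^n$ satisfies $Ax=b$ and $u\in\R^m$ satisfies $z:=A^\top u\in\Z^n$, then $b^\top u=(Ax)^\top u=x^\top z=\sum_i x_iz_i$. Since $(\cL,+)$ is an additive subgroup of $(\R,+)$ and is therefore closed under scaling by any integer (repeated addition), each $x_iz_i\in\cL$, so $b^\top u\in\cL$, ruling out~(b) whenever~(a) holds.

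For the other direction, I would assume (a) fails and split into cases. If $Ax=b$ has no real solution, the classical Fredholm alternative produces $\tilde u\in\R^m$ with $A^\top\tilde u=\0$ and $b^\top\tilde u=1$; picking any $c\in\R\setminus\cL$ (which exists as $\cL\neq\R$) and rescaling to $u:=c\tilde u$ yields $A^\top u=\0\in\Z^n$ and $b^\top u=c\notin\cL$. Otherwise $Ax=b$ is real-feasible. After trimming redundant rows of $A$ (together with the matching entries of $b$, forced by consistency for real feasibility), I may assume $A$ has full row rank; any certificate $u'$ for the trimmed system extends back by zeros in the removed positions to a certificate for the original. I would then invoke the column Hermite normal form: there exists a unimodular $V\in\Z^{n\times n}$ with $AV=[L\mid\0]$, where $L\in\Z^{m\times m}$ is lower triangular with positive diagonal, hence invertible over $\Q$. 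Because $V$ and $V^{-1}$ both have integer entries, the substitution $y=V^{-1}x$ is a bijection $\cL^n\to\cL^n$, and setting the trailing $n-m$ coordinates of $y$ to zero reduces the $\cL$-feasibility of $Ax=b$ to the membership $L^{-1}b\in\cL^m$. The failure of (a) therefore produces an index $i^*$ with $(L^{-1}b)_{i^*}\notin\cL$.

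To finish, I would set $u:=(L^\top)^{-1}e^{i^*}\in\R^m$, so that $L^\top u=e^{i^*}\in\Z^m$. The identity $A^\top=(V^{-1})^\top\begin{pmatrix}L^\top\\0\end{pmatrix}$, together with the fact that $(V^{-1})^\top$ has integer entries, then gives $A^\top u\in\Z^n$; meanwhile $b^\top u=u^\top b=(e^{i^*})^\top L^{-1}b=(L^{-1}b)_{i^*}\notin\cL$, delivering (b). The main obstacle I anticipate is the bookkeeping around HNF---specifically, verifying that $\cL$-feasibility of the primal $x$ and $\Z$-integrality of $A^\top u$ both transform cleanly under the unimodular change of variables, and that the row-rank reduction preserves the existence of a certificate on both sides. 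The rest is routine finite-dimensional linear algebra.
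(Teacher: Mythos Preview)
Your argument is correct. Note, however, that the paper does not supply its own proof of this statement: it is quoted verbatim as \cite[Theorem~2.7]{abdi23} and used as a black box. There is therefore nothing in the present paper to compare your approach against.

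That said, your proof is complete and self-contained. The incompatibility direction is the standard one-line computation, and the forward direction via Hermite normal form is the natural route: unimodular column operations preserve both $\cL$-feasibility of $Ax=b$ (because $V$ and $V^{-1}$ are integer matrices and $\cL$ is a $\Z$-module) and integrality of $A^\top u$ (because $(V^{-1})^\top$ is integer), so the problem collapses to the square invertible case where the certificate $u=(L^\top)^{-1}e^{i^*}$ reads off the offending coordinate of $L^{-1}b$. The bookkeeping you flag---row trimming and the two-sided transformation behaviour under $V$---is routine and you have described it accurately; extending a certificate for the trimmed system by zeros works because the deleted rows contribute nothing to $A^\top u$ or $b^\top u$.
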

\subsection{Generating sets and cones}\label{ax-gen}
Recall the definition of $\cL$-GSC in \S\ref{sec-gsc-def} and of $\cL$-GSS in \S\ref{sec-nec}.
The following was proved for the case where $\cL$ is the set of dyadic rationals in \cite[Proposition~3.3]{abdi23b}.
The same proof extends to the following general context, we include it for completeness.
\begin{PR}\label{GSC-GSS} 
Suppose $(\cL,+)$ forms a subgroup of $(\R,+)$ and $\cL\supset\Z$.
If $\{a^1,\ldots,a^m\}\subset \Z^n$ is an $\cL$-GSC then it is an $\cL$-GSS.
\end{PR}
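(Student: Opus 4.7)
The plan is a standard ``shift into the cone'' argument that exploits the subgroup structure of $\cL$ together with the containment $\Z\subseteq\cL$.

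Let $v\in\spn\{a^1,\ldots,a^m\}\cap\cL^n$. The goal is to produce a representation $v=\sum_{i=1}^m\lambda_i a^i$ with each $\lambda_i\in\cL$. First, fix \emph{any} real representation $v=\sum_{i=1}^m\alpha_i a^i$ with $\alpha_i\in\R$ (this exists because $v$ lies in the linear span). The key idea is that although the $\alpha_i$ need not belong to $\cL$, we can add a large integer conic combination of the generators to land inside the cone, apply the $\cL$-GSC hypothesis there, and then subtract the integer part back off.

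Concretely, for each $i$ choose an integer $\beta_i\in\Z_+$ with $\beta_i\geq -\alpha_i$, and define
\[
w \;:=\; v+\sum_{i=1}^m \beta_i\,a^i \;=\; \sum_{i=1}^m(\alpha_i+\beta_i)\,a^i.
\]
By construction $\alpha_i+\beta_i\geq 0$, so $w$ lies in the conic hull of $\{a^1,\ldots,a^m\}$. Moreover, since $a^i\in\Z^n\subseteq\cL^n$ and $v\in\cL^n$, and $(\cL,+)$ is closed under addition, we have $w\in\cL^n$ as well. Invoking the $\cL$-GSC property produces multipliers $\gamma_i\in\cL$ (with $\gamma_i\geq 0$, though only $\gamma_i\in\cL$ matters) such that $w=\sum_{i=1}^m\gamma_i\,a^i$. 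Rearranging,
\[
v \;=\; w-\sum_{i=1}^m\beta_i\,a^i \;=\; \sum_{i=1}^m(\gamma_i-\beta_i)\,a^i,
\]
and because $\gamma_i\in\cL$, $\beta_i\in\Z\subseteq\cL$, and $(\cL,+)$ is a subgroup, each coefficient $\gamma_i-\beta_i$ lies in $\cL$. This is the desired $\cL$-linear representation.

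There is no real obstacle here; the only delicate point is remembering that the chosen real representation $v=\sum\alpha_i a^i$ need not be unique when the $a^i$ are linearly dependent, but uniqueness is irrelevant because we only need \emph{one} representation to initiate the shift. The argument uses exactly the two hypotheses: $\Z\subseteq\cL$ (to make the shifts $\beta_i$ admissible) and $(\cL,+)\leq(\R,+)$ (to conclude $\gamma_i-\beta_i\in\cL$).
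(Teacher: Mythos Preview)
Your proof is correct and is essentially the same ``shift into the cone'' argument the paper gives: the paper takes a real solution $\bar{x}$, subtracts its floor to obtain a nonnegative solution to a translated system, applies the $\cL$-GSC hypothesis, and then adds the floor back, whereas you add arbitrary large enough integers $\beta_i$ instead of subtracting $\lfloor\alpha_i\rfloor$. The two differ only cosmetically.
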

\noindent In \cite{abdi23b}, it was observed that the converse does not hold.
\begin{proof}[\Cref{GSC-GSS}]
Let $A\in \Z^{m\times n}$ be the matrix whose columns are $a^1,\ldots,a^n$. 
Take $b\in \cL^m$ such that $A\bar{x}=b$ for some $\bar{x} \in \R^n$. 
We need to show that the system $Ax=b$ has a solution in $\cL^n$.
To this end, let $\bar{x}':=\bar{x}-\lfloor \bar{x}\rfloor\geq \0$ and $b':=A\bar{x}'=b-A\lfloor \bar{x}\rfloor$.
Since $A\lfloor \bar{x}\rfloor\in\Z^m$, $\cL\supset\Z$, and
$(\cL,+)$ forms a group, $b'\in\cL^m$. 
By construction, $Ax=b',x\geq \0$ has a solution, namely $\bar{x}'$.
So it has a solution, say $\bar{z}'\in\cL^n$, as the columns of $A$ form an $\cL$-GSC. 
Let $\bar{z}:=\bar{z}'+\lfloor \bar{x}\rfloor$, which is also in $\cL^n$ since $\cL\supset\Z$ and
$(\cL,+)$ forms a group.
Then, $A\bar{z}=A\bar{z}'+A\lfloor \bar{x}\rfloor=b'+A\lfloor \bar{x}\rfloor=b$, so $\bar{z}\in\cL^n$ is a solution to $Ax=b$, as required.
\end{proof}
\noindent
The next result was proved for the case of $p$-adic rationals in \cite{abdi23b}.
The same proof also works for the case of integers. We include it for completeness.
\begin{PR}\label{GSS}
The following are equivalent for a matrix $A\in\Z^{m\times n}$,
\begin{enumerate}[\;a.]
\item the rows of $A$ form an $\Z$-GSS,
\item the columns of $A$ form an $\Z$-GSS,
\item whenever $u^\top A$ and $Ax$ are integral, then $u^\top Ax\in\Z$.
\end{enumerate}
\end{PR}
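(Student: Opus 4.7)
The plan is to prove (a) $\Leftrightarrow$ (c), and then observe that condition (c) is invariant under replacing $A$ by $A^{\top}$ (it amounts to swapping the roles of $u$ and $x$), so the same argument applied to $A^{\top}$ yields (b) $\Leftrightarrow$ (c). This gives the three-way equivalence with no extra work, so the whole proof reduces to the single equivalence (a) $\Leftrightarrow$ (c).

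For (a) $\Rightarrow$ (c), suppose the rows of $A$ form a $\Z$-GSS and let $u \in \R^m$, $x \in \R^n$ satisfy $u^{\top}A \in \Z^n$ and $Ax \in \Z^m$. Since $u^{\top}A$ is an integer vector lying in the row space of $A$, condition (a) supplies $\bar{u} \in \Z^m$ with $\bar{u}^{\top}A = u^{\top}A$. Then $u^{\top}Ax = \bar{u}^{\top}(Ax)$ is the inner product of two integer vectors, hence integer.

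For (c) $\Rightarrow$ (a), the main idea is to invoke the theorem of the alternative (\Cref{alternative}) with $\cL = \Z$. Let $b \in \Z^n$ be an integer vector in the row space of $A$, so $b^{\top} = y^{\top}A$ for some $y \in \R^m$; we must find an integer solution to the linear system $A^{\top}u = b$. If no such integer solution exists, then by \Cref{alternative} there is some $x \in \R^n$ with $(A^{\top})^{\top}x = Ax \in \Z^m$ and $b^{\top}x \notin \Z$. But now both $y^{\top}A = b^{\top}$ and $Ax$ are integral, so condition (c) forces $y^{\top}Ax \in \Z$; since $y^{\top}Ax = b^{\top}x$, this contradicts $b^{\top}x \notin \Z$. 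Hence the integer solution exists, establishing (a).

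The only genuinely delicate step is the (c) $\Rightarrow$ (a) direction, where one must set up the application of \Cref{alternative} to the right transposed system so that the certificate $x$ produced by the alternative has $Ax$ integral (matching the hypothesis of (c)) while $b^{\top}x$ plays the role of the obstructing non-integer. Everything else, including the symmetric passage to (b) $\Leftrightarrow$ (c), is essentially bookkeeping.
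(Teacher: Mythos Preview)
Your proof is correct and follows essentially the same approach as the paper: use \Cref{alternative} with $\cL=\Z$ for the nontrivial direction, and exploit the invariance of (c) under transposition to get the remaining equivalence for free. The only cosmetic difference is that the paper proves (b) $\Leftrightarrow$ (c) directly and deduces (a) $\Leftrightarrow$ (c) by symmetry, whereas you do it the other way round.
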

\begin{proof}
{\bf (b) $\Rightarrow$ (c)}
Choose $x$ and $u$ such that $u^\top A$ and $Ax$ are integral. Let $b=Ax\in\Z^n$. 
By (b), there exists $\bar{x}\in\Z^n$ such that $b=A\bar{x}$.
Thus, $u^\top Ax=u^\top A\bar{x}=(u^\top A)\bar{x}\in\Z$,
since $u^\top A\in\Z^m$ and $\bar{x}\in\Z^n$.
{\bf (c) $\Rightarrow$ (b)}
Pick $b\in\Z^m$ such that $Ax=b$ for some $x\in\R^n$.
We need to show that $Ax=b$ has a solution in $\Z$.
Pick any $u\in\R^m$ for which $u^\top A$ is integral.
Then by (c), $u^\top b=u^\top Ax\in\Z$. Since this holds for all $u$ such that
$u^{\top}A$ is integral,
it then follows from \Cref{alternative} that $Ax=b$ has a solution in $\Z$.
{\bf (a)  $\Leftrightarrow$ (c)}
Observe that (c) holds for $A$ if and only if it holds for $A^\top$.
Therefore, (b) $\Leftrightarrow$ (c) implies that (a) $\Leftrightarrow$ (c).
\end{proof}
\subsection{Affine hull}\label{ax-affine}
Recall that for a non-empty face $F$ of $\{x \in \R^n : Mx\leq b\}$ we have that $F=\{x \in \R^n : Mx\leq b\}\cap\{x \in \R^n : \row_i(M)x=b_i, i\in I_{M,b}(F)\}$ \cite[Theorem 3.24]{CCZ2014}.
The affine hull of a polyhedron is characterized by its implicit equalities \cite[Section 3.7]{CCZ2014}, namely,
\begin{PR}\label{face-hull}
Let $M^=x =  b^=$ be the implicit equalities of $Mx\leq b$. Then
\[\aff(\{x \in \R^n : Mx\leq b\})=\{x \in \R^n :  M^=x=b^=\}.\]
\end{PR}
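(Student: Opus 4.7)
The plan is the standard two-direction argument, based on exhibiting a point in the relative interior of $P := \{x : Mx \leq b\}$.

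First I would dispatch the easy inclusion $\aff(P) \subseteq \{x : M^=x = b^=\}$. By definition of implicit equalities, every point of $P$ satisfies $M^= x = b^=$. Since the set on the right is an affine subspace containing $P$, it must contain the smallest such subspace, namely $\aff(P)$.

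For the reverse inclusion, the key ingredient is to produce a point $\bar x \in P$ that satisfies every non-implicit constraint strictly. Let $M^< x \leq b^<$ denote the non-implicit inequalities of $Mx \leq b$, indexed by a set $J$. By definition, for each $j \in J$ there exists $x^j \in P$ with $\row_j(M^<)\, x^j < b^<_j$. I would take $\bar x$ to be the average $\frac{1}{|J|}\sum_{j \in J} x^j$ (or $\bar x = x^1$ if $J = \emptyset$); by convexity $\bar x \in P$, and every non-implicit inequality is now strictly satisfied at $\bar x$, while $M^= \bar x = b^=$ automatically.

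Now pick any $y \in \{x : M^= x = b^=\}$; I need to show $y \in \aff(P)$. If $y = \bar x$ we are done, so assume $y \neq \bar x$, and consider the parametric ray $z(\epsilon) := \bar x + \epsilon(y - \bar x)$. For every $\epsilon \in \R$, $z(\epsilon)$ satisfies $M^= z(\epsilon) = b^=$ since both $\bar x$ and $y$ do. For the non-implicit inequalities, $\row_j(M^<)\bar x < b^<_j$ strictly, so by continuity there is a threshold $\epsilon_0 > 0$ such that for all $\epsilon \in (0,\epsilon_0]$ the point $z(\epsilon)$ also satisfies $M^< z(\epsilon) \leq b^<$. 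Fix such an $\epsilon$; then $z(\epsilon) \in P$. Finally, $y = \bar x + \tfrac{1}{\epsilon}\bigl(z(\epsilon) - \bar x\bigr)$ is an affine combination of the two points $\bar x, z(\epsilon) \in P$, so $y \in \aff(P)$, completing the proof.

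No step is technically difficult; the only subtlety is the construction of the relative-interior point $\bar x$ that strictly satisfies all non-implicit inequalities simultaneously, which is the usual workhorse behind this characterization. The continuity argument giving $\epsilon_0$ is immediate because only finitely many inequalities are involved.
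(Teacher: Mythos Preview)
Your argument is correct and is exactly the standard proof; the paper itself does not supply a proof of this proposition but simply cites \cite[Section~3.7]{CCZ2014} for it. One cosmetic slip: when $J=\emptyset$ there is no $x^1$ to take, but then every inequality is an implicit equality, so $P$ already coincides with the affine set $\{x:M^=x=b^=\}$ and the reverse inclusion is immediate; you also implicitly assume $P\neq\emptyset$, which is the only case used in the paper.
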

\noindent
In order to use \Cref{density} for our purposes, we will need to get an explicit description of the affine hull of the optimal solutions to $(D\!:\!M,b,w)$.
First, let us state the Complementary Slackness conditions for the primal-dual pair $(P\!:\!M,b,w)$ and $(D\!:\!M,b,w)$.
\begin{equation}\label{CS-condition}
\mbox{For all row indices $i$ of $M$:}\; \row_i(M)x=b_i\;\mbox{or}\; y_i=0.
\end{equation}
\noindent
Recall for linear programming that a pair of primal, dual feasible solutions are both optimal if and only if complementary slackness holds.
We use complementary slackness to characterize the optimal solutions of $(P\!:\!M,b,w)$.
\begin{PR}\label{primal-opt-char}
A nonempty face $F$ of $\{x \in \R^n : Mx\leq b\}$ is contained in the optimal solutions of $(P\!:\!M,b,w)$ if and only if $w$ is a conic combination\footnote{We interpret $\cone(\emptyset):=\{\0\}$.} of $S:=\{\row_i(M): i\in I_{M,b}(F)\}$.
\end{PR}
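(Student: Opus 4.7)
The plan is to combine Strong LP Duality with the Complementary Slackness conditions \eqref{CS-condition}. The bridge between the face $F$ and the support of a dual solution is the per-index nature of $I_{M,b}(F)$: an index $i$ lies in $I_{M,b}(F)$ iff $\row_i(M)x = b_i$ for \emph{every} $x \in F$, so for each $i \notin I_{M,b}(F)$ there is a witness $x^i \in F$ with $\row_i(M)x^i < b_i$. This per-index witness is what lets complementary slackness be evaluated one coordinate of the dual solution at a time.

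For the ``only if'' direction, I would assume $F$ lies in the optimal face of $(P\!:\!M,b,w)$. Then $w$ is admissible and the primal attains its maximum, so by Strong Duality there exists an optimal $\bar y \in \R^m_+$ with $M^\top \bar y = w$. For each $i \notin I_{M,b}(F)$, applying \eqref{CS-condition} to the optimal pair $(x^i, \bar y)$ forces $\bar y_i = 0$. Hence the support of $\bar y$ is contained in $I_{M,b}(F)$, and
\[
w \;=\; M^\top \bar y \;=\; \sum_{i \in I_{M,b}(F)} \bar y_i \, \row_i(M),
\]
which exhibits $w$ as a conic combination of $S$.

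For the ``if'' direction, write $w = \sum_{i \in I_{M,b}(F)} \bar y_i \, \row_i(M)$ with $\bar y_i \geq 0$ and extend $\bar y$ by zero outside $I_{M,b}(F)$ to get $\bar y \in \R^m_+$ with $M^\top \bar y = w$, so $\bar y$ is feasible for $(D\!:\!M,b,w)$. For any $x \in F$, every index in the support of $\bar y$ lies in $I_{M,b}(F)$ and is therefore tight at $x$, so \eqref{CS-condition} holds for the pair $(x, \bar y)$. Since $x$ is primal feasible, $\bar y$ is dual feasible, and the pair is complementary, both are optimal; thus every $x \in F$ is primal optimal, as desired.

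There is no serious obstacle here: the proof is a clean application of LP duality, and the only point requiring a bit of care is the separate choice of witness $x^i \in F$ per index in the forward direction, since $I_{M,b}(F)$ is defined via a universal quantifier over $F$ rather than at a single point. The footnote convention $\cone(\emptyset) := \{\0\}$ also handles the degenerate case $I_{M,b}(F) = \emptyset$ (which occurs precisely when $F = \{x : Mx \leq b\}$ has no implicit equalities), correctly predicting that such an $F$ is contained in the optimal face exactly when $w = \0$.
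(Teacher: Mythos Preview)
Your proof is correct and follows essentially the same approach as the paper via complementary slackness. The only cosmetic difference is that in the ``only if'' direction the paper picks a single point in the relative interior of $F$ (where all non-tight constraints are strict simultaneously) rather than a separate witness $x^i$ per index, but this is immaterial.
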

\begin{proof}
Let $F$ be a nonempty face of $\{x \in \R^n : Mx\leq b\}$ and suppose that $w$ is a conic combination of $S$.
Then there exists $y\geq\0$ with $w=M^\top y$ and $y_i=0$ for all $i\notin I_{M,b}(F)$.
Then $y$ is feasible for $(D\!:\!M,b,w)$ and any $x\in F$ and $y$ satisfy \eqref{CS-condition}. 
Hence every $x\in F$ is an optimal solution of $(P\!:\!M,b,w)$.
Conversely, assume every $x \in F$ is an optimal solution of $(P\!:\!M,b,w)$.
Pick $x$ in the relative interior of $F$ and let $y$ be any optimal solution of $(D\!:\!M,b,w)$.
Then $y\geq\0$, $w=M^\top y$ by feasibility, and $y_i=0$ for all $i\notin I_{M,b}(F)$ by \eqref{CS-condition}.
It follows that $w$ is a conic combination of $S$.
\end{proof}
\noindent
Next, we use complementary slackness and strict complementarity to characterize the optimal solutions of $(D\!:\!M,b,w)$.
\begin{PR}\label{dual-opt-char}
Let $F$ be the set of optimal solutions to $(P\!:\!M,b,w)$ where $F\neq\emptyset$.
Let $y$ be a feasible solution to $(D\!:\!M,b,w)$. 
Then, $y$ is optimal if and only if $y_i=0$ for all $i\notin I_{M,b}(F)$.
Moreover, there exists an optimal solution $y$ with $y_i>0$ for all $i\in I_{M,b}(F)$.
\end{PR}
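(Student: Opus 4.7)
The first equivalence is a direct application of the complementary slackness conditions \eqref{CS-condition}. For the ``only if'' direction, I would pick $\bar x$ in the relative interior of $F$; then $\row_i(M)\bar x<b_i$ for every $i\notin I_{M,b}(F)$ by definition of $I_{M,b}(F)$, so \eqref{CS-condition} applied to the jointly optimal pair $(\bar x,y)$ forces $y_i=0$. For the ``if'' direction, any $x\in F$ satisfies $\row_i(M)x=b_i$ for $i\in I_{M,b}(F)$ while the hypothesis on $y$ gives $y_i=0$ for $i\notin I_{M,b}(F)$; so \eqref{CS-condition} holds for the feasible pair $(x,y)$, making both optimal.

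The ``Moreover'' part is a strict complementarity statement. My plan is to produce, for each $i\in I_{M,b}(F)$, an optimal dual solution $y^{(i)}$ with $y^{(i)}_i>0$, and then take any strict convex combination $y=\sum_{i\in I_{M,b}(F)}\lambda_iy^{(i)}$, with $\lambda_i>0$ and $\sum\lambda_i=1$. Since the set of optimal duals is convex, $y$ is itself an optimal dual, and $y_i>0$ for every $i\in I_{M,b}(F)$, as required.

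To produce $y^{(i)}$, fix $i\in I_{M,b}(F)$, let $\tau$ denote the optimal value of $(D\!:\!M,b,w)$, and consider the auxiliary LP
\[
\max\{y_i:\ M^\top y=w,\ y\geq\0,\ b^\top y\leq\tau\},
\]
whose feasible set is exactly the set of dual optima. If its optimum is positive (possibly $+\infty$) the desired $y^{(i)}$ is immediate. Otherwise the optimum equals $0$, and by LP duality its dual
\[
\min\{w^\top\lambda+\tau\mu:\ M\lambda+\mu b\geq e^i,\ \mu\geq 0\}
\]
attains value $0$ at some $(\lambda,\mu)$. Picking any $\bar x\in F$, I set $\hat x:=(\bar x-\lambda)/(1+\mu)$, which is well-defined since $\mu\geq 0$ implies $1+\mu>0$. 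Combining $M\lambda+\mu b\geq e^i$ with $M\bar x\leq b$ and $\row_i(M)\bar x=b_i$ yields $M\hat x\leq b$ together with $\row_i(M)\hat x\leq b_i-1/(1+\mu)<b_i$, while $w^\top\lambda+\mu\tau=0$ gives $w^\top\hat x=\tau$. Hence $\hat x$ is a primal optimum, so $\hat x\in F$, yet constraint $i$ is strictly slack at $\hat x$, contradicting $i\in I_{M,b}(F)$.

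The main obstacle is this strict-complementarity construction; the rest is routine complementary slackness. A subtle design choice is the one-sided relaxation $b^\top y\leq\tau$ (rather than the equivalent equality) in the auxiliary LP, which enforces $\mu\geq 0$ in the dual certificate, keeps $1+\mu>0$, and prevents inequality directions from flipping when rescaling.
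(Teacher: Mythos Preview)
Your proof is correct. For the equivalence, your argument is essentially the same as the paper's, with a cosmetic difference: you pick a single $\bar x$ in the relative interior of $F$ (where all non-tight constraints are simultaneously slack), whereas the paper picks, for each $i\notin I_{M,b}(F)$, a separate witness $x^i\in F$ with $\row_i(M)x^i<b_i$.

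For the ``Moreover'' part the routes differ. The paper simply invokes the strict complementarity theorem as a black box: since $(P\!:\!M,b,w)$ has an optimum, there is a strictly complementary primal--dual pair, and the dual member of that pair has $y_i>0$ for all $i\in I_{M,b}(F)$. You instead \emph{prove} the needed instance of strict complementarity from scratch, via the auxiliary LP and its dual certificate, and then average the resulting $y^{(i)}$. Your construction is self-contained and makes explicit why the contradiction with $i\in I_{M,b}(F)$ arises; the paper's version is a one-line citation. Both are valid; your approach buys independence from an external theorem at the cost of a short extra computation. One small edge case worth noting in your write-up: if $I_{M,b}(F)=\emptyset$ the averaging step is vacuous, and any optimal dual solution (which exists since $F\neq\emptyset$) already satisfies the conclusion.
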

\begin{proof}
Suppose that $y$ is optimal.
Let $i\notin I_{M,b}(F)$, then by definition of $I_{M,b}(F)$, there exists $x^i\in F$ with $row_i(M)x^i<b_i$.
Since $x^i$ is optimal for $(P\!:\!M,b,w)$ it follows from \eqref{CS-condition} for the pair $x^i,y$ that $y_i=0$.
Suppose now that $y_i=0$ for all $i\notin I_{M,b}(F)$.
Pick any $\bar{x}\in F$, then $\bar{x},y$ satisfy \eqref{CS-condition}.
Hence, $y$ is optimal.
Finally, since $(P\!:\!M,b,w)$ has an optimal solution, by strict complementarity theorem, there exist primal and dual solutions $x$ and $y$, respectively which are strictly complementary. 
Then, $y_i>0$ for all $i\in I_{M,b}(F)$, as desired.
\end{proof}
\noindent
Next, we can characterize the affine hull of optimal solutions of $(D\!:\!M,b,w)$.
\begin{PR}\label{affine-hull-dual}
Let $F$ be the set of optimal solutions to $(P\!:\!M,b,w)$ and assume that $F\neq\emptyset$.
Then the affine hull of the optimal dual solutions of $(D\!:\!M,b,w)$ is given by,
\begin{equation}\label{affine-hull-eq}
\left\{y \in \R^m : M^\top y=w, y_i=0, i\notin I_{M,b}(F)\right\}.
\end{equation}
\end{PR}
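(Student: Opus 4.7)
The plan is to show the two inclusions between $\aff(D^*)$ (where $D^*$ denotes the set of optimal solutions of $(D\!:\!M,b,w)$) and the affine set $A$ described by \eqref{affine-hull-eq}.

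For the inclusion $\aff(D^*)\subseteq A$, I would simply invoke \Cref{dual-opt-char}: every optimal $y$ is feasible, so $M^{\top}y=w$ and $y\geq\0$, and moreover $y_i=0$ for every $i\notin I_{M,b}(F)$. Since all optimal solutions satisfy the linear constraints defining $A$, so does their affine hull.

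For the reverse inclusion $A\subseteq\aff(D^*)$, I would exploit the strict complementarity assertion in \Cref{dual-opt-char}: there exists an optimal $y^*\in D^*$ with $y^*_i>0$ for every $i\in I_{M,b}(F)$. Given an arbitrary $y\in A$, I would consider the perturbation $y(\epsilon):=y^*+\epsilon(y-y^*)$ for small $\epsilon>0$. Linearity of $M^{\top}$ gives $M^{\top}y(\epsilon)=w$, and for every $i\notin I_{M,b}(F)$ both $y^*_i=0$ and $y_i=0$ force $y(\epsilon)_i=0$. For every $i\in I_{M,b}(F)$, the strict positivity $y^*_i>0$ ensures $y(\epsilon)_i\geq 0$ once $\epsilon$ is sufficiently small. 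Thus $y(\epsilon)$ is feasible and has zero coordinates off $I_{M,b}(F)$, so by \Cref{dual-opt-char} it lies in $D^*$. Writing
\[
y=\tfrac{1}{\epsilon}\,y(\epsilon)+\left(1-\tfrac{1}{\epsilon}\right)y^*,
\]
exhibits $y$ as an affine combination of two members of $D^*$, hence $y\in\aff(D^*)$.

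The only subtle point is ensuring that the perturbation $y(\epsilon)$ remains nonnegative, which is exactly where the existence of a strictly complementary optimal solution (already packaged in \Cref{dual-opt-char}) is crucial; once that is in hand, the rest reduces to bookkeeping on coordinates and an affine recombination. No new machinery beyond \Cref{dual-opt-char} is needed.
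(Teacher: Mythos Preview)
Your proof is correct and uses the same key ingredient as the paper's argument, namely the strict-complementarity part of \Cref{dual-opt-char}. The paper's proof is slightly more compact: it observes that, by \Cref{dual-opt-char}, the set $D^*$ of optimal dual solutions equals the polyhedron $Q=\{y:M^\top y=w,\ y\geq\0,\ y_i=0\text{ for }i\notin I_{M,b}(F)\}$, notes that strict complementarity shows none of the constraints $y_i\geq 0$ for $i\in I_{M,b}(F)$ is an implicit equality of $Q$, and then invokes \Cref{face-hull} (affine hull equals solution set of the implicit equalities) to conclude. Your perturbation-and-affine-recombination argument is precisely an in-place proof of that last step, so the two arguments are essentially the same; yours just unpacks the black box \Cref{face-hull}.
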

\begin{proof}
By \Cref{dual-opt-char} the optimal solutions to $(D\!:\!M,b,w)$ are exactly the points in,
\[ 
Q=\left\{y \in \R^m: M^\top y=w, y\geq\0, y_i=0, i\notin I_{M,b}(F)\right\}. 
\]
By the "moreover" part of \Cref{dual-opt-char} $y_i\geq 0$ is not a tight constraint for $Q$ when $i\notin I_{M,b}(F)$.
The result now follows from \Cref{face-hull}.
\end{proof}
\subsection{A Geometric interpretation}\label{ax-geometry}
Next, we provide a geometric interpretation of the tilt constraints.
\begin{PR} \label{u-perturbation}
Consider $Mx\leq b$ with nonempty faces $F,F^+$ where $F^+$ is a down-face of $F$ and where $F$ is defined by a supporting hyperplane $\{x \in \R^n : w^\top x=\tau\}$.
Let $I:=I_{M,b}(F)\setminus I_{M,b}(F^+)\neq\emptyset$ and assume that for every $i\in I$ we are given $u_i\in\R$.
Define, 
\begin{equation}\label{u-perturbation-eq}
\bar{w}^{\top}:=w^{\top}-\sum_{i\in I}u_i\row_i(M)\quad\mbox{and}\quad\bar{\tau}:=\tau-\sum_{i\in I}u_ib_i.
\end{equation}
Then the following are equivalent,
\begin{enumerate}[\;a.]
\item 
$F^+\subseteq\{x \in \R^n : \bar{w}^\top x=\bar{\tau}\}$,
\item 
$\bar{w}\in\spn\{\row_i(M):i\in I_{M,b}(F^+)\}$,
\item 
the $(w,F,F^+)$-tilt constraint for $Mx\leq b$ is satisfied by $u_i: i\in I$.
\end{enumerate}
\end{PR}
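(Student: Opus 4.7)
The plan is to prove the cycle $(c)\Rightarrow(a)\Rightarrow(b)\Rightarrow(c)$ after two preparatory observations. The first observation is that the affine functional $x\mapsto\bar{w}^{\top}x-\bar{\tau}$ already vanishes on all of $\aff(F)$. This is a direct check from \eqref{u-perturbation-eq}: any $x\in F$ satisfies $w^{\top}x=\tau$ (because $\{x:w^{\top}x=\tau\}$ supports $F$) and $\row_i(M)x=b_i$ for every $i\in I:=I_{M,b}(F)\setminus I_{M,b}(F^+)\subseteq I_{M,b}(F)$, so $\bar{w}^{\top}x=\tau-\sum_{i\in I}u_ib_i=\bar{\tau}$; the equality extends from $F$ to $\aff(F)$ because $\bar{w}^{\top}x-\bar{\tau}$ is affine.

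The second observation is a purely algebraic reformulation of $(c)$. Clearing the denominator $\tau-w^{\top}\rho$ and then substituting the two relations $\sum_{i\in I}u_i\row_i(M)=w^{\top}-\bar{w}^{\top}$ and $\sum_{i\in I}u_ib_i=\tau-\bar{\tau}$ drawn from \eqref{u-perturbation-eq} collapses the tilt equation to the single identity $\bar{w}^{\top}\rho=\bar{\tau}$. So $(c)$ just says that the same affine functional also vanishes at~$\rho$. That clearing the denominator is legitimate follows from a short dimension count: $F\subsetneq F^+$ forces $w^{\top}x$ to be non-constant on $\aff(F^+)$, and then $\aff(F)$ and $\{x\in\aff(F^+):w^{\top}x=\tau\}$ are both codimension-$1$ affine subspaces of $\aff(F^+)$ with one contained in the other, hence equal; in particular $w^{\top}\rho\neq\tau$ for any $\rho\in\aff(F^+)\setminus\aff(F)$.

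With these in hand, the cycle is short. For $(c)\Rightarrow(a)$: the functional $\bar{w}^{\top}x-\bar{\tau}$ vanishes on $\aff(F)\cup\{\rho\}$, whose affine hull equals $\aff(F^+)$ (using $\rho\in\aff(F^+)\setminus\aff(F)$ together with $\dim\aff(F^+)=\dim\aff(F)+1$), so it vanishes on all of $\aff(F^+)\supseteq F^+$. For $(a)\Rightarrow(b)$: by \Cref{face-hull}, $\aff(F^+)$ is a translate of the subspace $V=\bigcap_{i\in I_{M,b}(F^+)}\ker\row_i(M)$, and since $\bar{w}^{\top}x$ is constant on this translate, $\bar{w}$ annihilates $V$, i.e., $\bar{w}\in V^{\perp}=\spn\{\row_i(M):i\in I_{M,b}(F^+)\}$. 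For $(b)\Rightarrow(c)$: writing $\bar{w}=\sum_{i\in I_{M,b}(F^+)}\mu_i\row_i(M)$ makes $\bar{w}^{\top}x$ constant on $\aff(F^+)$, and the preparatory identity applied to any point of $F\subseteq\aff(F^+)$ pins that constant to~$\bar{\tau}$, yielding $\bar{w}^{\top}\rho=\bar{\tau}$. I do not expect any real obstacle; the only genuine care needed is the sign bookkeeping when unfolding the tilt constraint and the use of nonemptiness of $F$ to transport the value $\bar{\tau}$ across $\aff(F^+)$.
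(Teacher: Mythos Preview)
Your proof is correct and follows essentially the same route as the paper: the two preparatory observations are exactly the paper's Claims~1 and~2, and your $(c)\Rightarrow(a)$ and $(a)\Rightarrow(b)$ match the paper's arguments almost verbatim. The only cosmetic differences are that you close a cycle $(c)\Rightarrow(a)\Rightarrow(b)\Rightarrow(c)$ whereas the paper proves $(a)\Leftrightarrow(c)$ and $(a)\Leftrightarrow(b)$ separately, and you add an explicit justification that $\tau-w^{\top}\rho\neq 0$, which the paper leaves implicit.
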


\vspace{0.1in}\noindent{\bf \Cref{ex-part1} - continued.}
Recall, $M,b,w,\tau$ as in \eqref{ex-ridge}.
We had defined, faces $F=\{(0,3)\}$ and $F^+_1$ which consists of the line segment with ends $(0,3)$ and $(3,0)$.
Recall that $\rho_1=(3,0)$.
The $(w,F,F^+_1)$-tilt constraint has a unique solution $u_2=1$.
Let $\bar{w}=w-u_2\row_2(M)=(0,1)-(-1,0)=(1,1)$ and let $\bar{\tau}=\tau-u_2b_2=3$.
Then (c) holds in \Cref{u-perturbation} and the reader can verify that 
$F^+_1\subseteq\{x \in \R^n : \bar{w}^\top x=\bar{\tau}\}$ and that $\bar{w}^{\top}\in\spn\{\row_1(M)\}$.
\begin{proof}[\Cref{u-perturbation}] 
Let $\rho\in\aff(F^+)\setminus\aff(F)$ and $H=\{x \in \R^n : \bar{w}^\top x=\bar{\tau}\}$.
\begin{claim}\label{claim1-tilt}
$F\subseteq\{x \in \R^n : \bar{w}^\top x=\bar{\tau}\}$.
\end{claim}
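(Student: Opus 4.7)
The plan is to unpack the definitions and use the two facts that characterize points of $F$: the supporting hyperplane condition and the tightness of every constraint indexed by $I \subseteq I_{M,b}(F)$.

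Concretely, I would pick an arbitrary $x \in F$. Since $F = Q \cap \{x : w^\top x = \tau\}$ is defined by the given supporting hyperplane of $Q = \{x : Mx \leq b\}$, we have $w^\top x = \tau$. Moreover, for each $i \in I = I_{M,b}(F) \setminus I_{M,b}(F^+)$, by definition of $I_{M,b}(F)$ the constraint $\row_i(M) x \leq b_i$ is tight on all of $F$, so $\row_i(M) x = b_i$. Substituting into the definition of $\bar{w}$ and $\bar{\tau}$ from \eqref{u-perturbation-eq} then yields
\[
\bar{w}^\top x \;=\; w^\top x - \sum_{i \in I} u_i\, \row_i(M)\, x \;=\; \tau - \sum_{i \in I} u_i b_i \;=\; \bar{\tau},
\]
which is exactly the inclusion $F \subseteq \{x : \bar{w}^\top x = \bar{\tau}\}$.

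There is really no obstacle here: this subclaim is just the bookkeeping that says the perturbation $w \mapsto \bar{w}$, $\tau \mapsto \bar{\tau}$ is designed to leave the value on $F$ unchanged, because we subtract multiples of constraints that are tight on $F$. The substantive content of \Cref{u-perturbation} (the equivalence of (a), (b), (c)) will come in the subsequent claims, where one must use $\rho \in \aff(F^+) \setminus \aff(F)$ together with the definition of the tilt constraint to pass from ``tight on $F$'' to ``tight on all of $F^+$.''
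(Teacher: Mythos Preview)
Your proof is correct and essentially identical to the paper's: pick $x\in F$, use $w^\top x=\tau$ and $\row_i(M)x=b_i$ for $i\in I\subseteq I_{M,b}(F)$, then substitute into the definitions of $\bar{w}$ and $\bar{\tau}$ to get $\bar{w}^\top x=\bar{\tau}$.
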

\begin{cproof}
Let $x\in F$. 
Then $w^\top x=\tau$ and for all $i\in I\subseteq I_{M,b}(F)$, $\row_i(M) x=b_i$. Thus,
\[
\bar{w}^\top x= \left[w^{\top}-\sum_{i\in I}u_i\row_i(M)\right] x = \tau-\sum_{i\in I}u_ib_i=\bar{\tau},
\]
hence for every $x\in F$, $\bar{w}^\top x=\bar{\tau}$.
\end{cproof}
\begin{claim}\label{claim2-tilt}
$\bar{w}^\top \rho=\bar{\tau}$ if and only if (c) holds.
\end{claim}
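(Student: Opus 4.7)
The plan is to expand $\bar{w}^\top \rho - \bar{\tau}$ using the definitions in \eqref{u-perturbation-eq} and to recognize that, after rearranging, the equation $\bar{w}^\top \rho = \bar{\tau}$ is essentially the tilt constraint \eqref{tilt-eq}, modulo a nonzero scalar factor of $\tau - w^\top \rho$. Specifically, I would substitute to obtain
\[
\bar{w}^\top \rho - \bar{\tau} \;=\; \Bigl(w^\top \rho - \sum_{i\in I} u_i\row_i(M)\rho\Bigr) - \Bigl(\tau - \sum_{i\in I} u_i b_i\Bigr) \;=\; (w^\top\rho - \tau) + \sum_{i\in I} u_i[b_i - \row_i(M)\rho],
\]
so that $\bar{w}^\top \rho = \bar{\tau}$ is equivalent to
\[
\sum_{i\in I} u_i[b_i - \row_i(M)\rho] \;=\; \tau - w^\top \rho.
\]

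The remaining point is to justify that we may divide by $\tau - w^\top \rho$ to recover precisely the tilt constraint \eqref{tilt-eq}, i.e., that this quantity is nonzero. The hypothesis gives $\rho \in \aff(F^+)\setminus\aff(F)$, and by assumption $F$ is cut out of $Q$ by the supporting hyperplane $\{x: w^\top x = \tau\}$, with $Q \subseteq \{x : w^\top x \leq \tau\}$. Since $F \subsetneq F^+$ and $\dim F^+ = \dim F + 1$, the affine hyperplane $\{x: w^\top x = \tau\}$ cannot contain all of $\aff(F^+)$; otherwise $F^+ \subseteq Q \cap \{w^\top x = \tau\} = F$, a contradiction. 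Hence $\{w^\top x = \tau\}$ meets $\aff(F^+)$ in an affine subspace of dimension one less, which must equal $\aff(F)$ (since $\aff(F)$ is contained in it and has that dimension). It follows that every $\rho \in \aff(F^+)\setminus\aff(F)$ satisfies $w^\top \rho \neq \tau$.

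With $\tau - w^\top \rho \neq 0$ established, the equation $\sum_{i\in I} u_i[b_i - \row_i(M)\rho] = \tau - w^\top\rho$ can be divided through by $\tau - w^\top\rho$ to yield exactly the $(w,F,F^+)$-tilt constraint \eqref{tilt-eq}, and conversely. This gives the desired equivalence. The main obstacle I anticipate is not the algebra, which is a direct substitution, but ensuring the non-vanishing of $\tau - w^\top\rho$ is argued cleanly from the supporting-hyperplane hypothesis together with the down-face relation; once that is in hand, the two sides are trivially equivalent by a single division. This step also aligns with the earlier assertion (used in \Cref{ind-rep}) that the tilt constraint is independent of the choice of $\rho$, since the geometric condition $\bar{w}^\top\rho = \bar{\tau}$ on the right-hand side of the claim will end up being the same condition for every such $\rho$, as we will see in the next claim.
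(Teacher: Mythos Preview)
Your proposal is correct and takes essentially the same approach as the paper: both expand $\bar{w}^\top\rho=\bar{\tau}$ via the definitions in \eqref{u-perturbation-eq} and observe that the resulting equation is a rearrangement of the tilt constraint \eqref{tilt-eq}. Your explicit verification that $\tau - w^\top\rho \neq 0$ is a welcome addition that the paper leaves implicit (it is presupposed by the very definition of the tilt constraint), though your closing remarks about ``the next claim'' and \Cref{ind-rep} are tangential and could be dropped.
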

\begin{cproof}
$\bar{\tau}=\bar{w}^\top\rho$ can be rewritten as,
\[
\tau-\sum_{i\in I}u_ib_i=w^\top\rho-\sum_{i\in I}u_i\row_i(M)\rho.
\]
Then, observe that this is the $(w,F^+,F)$-tilt constraint (with the terms rearranged).
\end{cproof}
\noindent \fbox{(a) $\Rightarrow$ (c)}
Since (a) holds, $\aff(F^+)\subseteq H$, hence $\bar{w}^\top\rho=\bar{\tau}$.
Then (c) holds by Claim~\ref{claim2-tilt}.
\noindent \fbox{(c) $\Rightarrow$ (a)}
By Claim~\ref{claim1-tilt} and Claim~\ref{claim2-tilt} we have $\aff(F\cup\{\rho\})\subseteq H$.
Since $\rho\in\aff(F^+)\setminus\aff(F)$ and $\dim(F^+)=\dim(F)+1$ we have,
$\aff(F\cup\{\rho\})=\aff(F^+)$.
Hence, $\aff(F^+)\subseteq H$ and (a) holds.
\fbox{(a) $\Rightarrow$ (b)}
Since $F^+\subseteq H$, we have $\aff(F^+)-\rho\subseteq H-\rho$ \footnote{Given $S\subseteq\R^n$ and $\rho\in\R^n$, $S-\rho:=\{s-\rho:s\in S\}$.}.
By \Cref{face-hull}, $\aff(F^+)=\{x \in \R^n : \row_i(M)x=b_i, i\in I_{M,b}(F^+)\}$.
Thus $\aff(F^+)-\rho=\{x \in \R^n : \row_i(M)x=0, i\in I_{M,b}(F^+)\}$ and similarly $H-\rho=\{x \in \R^n : \bar{w}^\top x=0\}$.
Therefore,
\begin{equation}\tag{$\star$}
\{x \in \R^n : \row_i(M)x=0, i \in I_{M,b}(F^+)\}\subseteq\{x \in \R^n : \bar{w}^\top x=0\}.
\end{equation}
Taking the orthogonal complement yields,
\begin{equation}\tag{$\dagger$}
\spn\{\row_i(A):i\in I_{M,b}(F^+)\}\supseteq \spn(\bar{w}^{\top}).
\end{equation}
Hence, (b) holds.
\fbox{(b) $\Rightarrow$ (a)}
If (b) holds then so does ($\dagger$) and in turn, so does ($\star$).
Equivalently, $\aff(F^+)-\rho\subseteq H-\rho$.
Hence, $\aff(F^+)\subseteq H$, and (a) holds.
\end{proof}
\noindent We close this section with the following observation,
\begin{RE}\label{ind-rep}
We get the same constraint in \eqref{tilt-eq} for every choice of $\rho\in\aff(F^+)\setminus\aff(F)$.
\end{RE}
\begin{proof}
Pick $\rho_1,\rho_2\in\aff(F^+)\setminus\aff(F)$ and for $j=1,2$ denote by (ej) the equation \eqref{tilt-eq} with $\rho=\rho_j$.
Suppose that $u$ is a solution to (e1) and let $\bar{w}^\top x=\bar{\tau}$ as defined in \eqref{u-perturbation-eq}.
By the equivalence between (a) and (c)  for (e1) in \Cref{u-perturbation}, $F^+\subseteq\{x \in \R^n : \bar{w}^\top x=\bar{\tau}\}$.
By the equivalence between (a) and (c)  for (e2), $u$ is a solution to (e2).
Therefore, (e1) and (e2) have the same set of solutions, and hence, (e1) and (e2) are related by scaling.
As the right-hand-sides of (e1) and (e2) both equal $1$, the result follows.
\end{proof}
\subsection{The proof of \Cref{main-char}}\label{ax-proof}
We require a number of preliminaries for the proof.
\begin{PR}\label{dual-satisfy-base}
For $w$ admissible, let $F$ denote the set of optimal solutions of $(P\!:\!M,b,w)$ and let $F^+$ be any down-face of $F$.
If $y$ is an optimal solution of $(D\!:\!M,b,w)$ then setting $u_i:=y_i$ for all $i\in I_{M,b}(F)\setminus I_{M,b}(F^+)$ satisfies the $(w,F,F^+)$-tilt constraint.
\end{PR}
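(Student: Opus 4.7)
The plan is to reduce the claim to the equivalence (b) $\iff$ (c) in \Cref{u-perturbation} so that we never have to touch the parameter $\rho$ or the fraction appearing in \eqref{tilt-eq} directly. Set $I := I_{M,b}(F) \setminus I_{M,b}(F^+)$ and $u_i := y_i$ for $i \in I$, and define $\bar{w}$ as in \eqref{u-perturbation-eq}. Our goal becomes showing condition (b) of \Cref{u-perturbation}, namely $\bar w \in \spn\{\row_i(M) : i \in I_{M,b}(F^+)\}$.

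The key computation is to write $\bar w$ explicitly using dual feasibility. Since $y$ is feasible we have $w = M^\top y = \sum_{i=1}^m y_i \row_i(M)$. By \Cref{dual-opt-char}, dual optimality of $y$ forces $y_i = 0$ for all $i \notin I_{M,b}(F)$, so the sum collapses to $w = \sum_{i \in I_{M,b}(F)} y_i \row_i(M)$. Now the fact that $F^+$ is a down-face of $F$, i.e.\ $F \subset F^+$, implies $I_{M,b}(F^+) \subseteq I_{M,b}(F)$, and these two index sets partition $I_{M,b}(F)$ together with $I$. Splitting the sum along this partition gives
\[
w^\top = \sum_{i \in I} y_i \row_i(M) + \sum_{i \in I_{M,b}(F^+)} y_i \row_i(M),
\]
which, after subtracting the first sum (this is exactly $\sum_{i \in I} u_i \row_i(M)$), yields
\[
\bar w^\top = \sum_{i \in I_{M,b}(F^+)} y_i \row_i(M).
\]

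This places $\bar w$ in $\spn\{\row_i(M) : i \in I_{M,b}(F^+)\}$, so condition (b) of \Cref{u-perturbation} holds. Applying the (b) $\Rightarrow$ (c) direction of that proposition, the chosen $u$ satisfies the $(w,F,F^+)$-tilt constraint, which is what we needed. I do not anticipate any real obstacle: the only subtlety is remembering that the inclusion between faces reverses on the level of tight-index sets ($F \subset F^+$ gives $I_{M,b}(F^+) \subseteq I_{M,b}(F)$), and that the complementary slackness characterization in \Cref{dual-opt-char} is what makes the support of $y$ live inside $I_{M,b}(F)$ so that the partition argument goes through.
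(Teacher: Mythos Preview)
Your proposal is correct and follows essentially the same approach as the paper's proof: both use complementary slackness (the paper cites \eqref{CS-condition} directly, you cite \Cref{dual-opt-char}) to restrict the support of $y$ to $I_{M,b}(F)$, split the sum $w = M^\top y$ along the partition $I_{M,b}(F) = I \cup I_{M,b}(F^+)$, and then invoke the (b)$\Rightarrow$(c) direction of \Cref{u-perturbation}.
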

\begin{proof}
Throughout the proof, we write $I$ for $I_{M,b}$.
For every $i\in I(F)\setminus I(F^+)$ set $u_i=y_i$.
By Complementary Slackness \eqref{CS-condition}, we have $y_i=0$ for all $i\notin I(F)$.
Since $y$ is feasible for $(D\!:\!M,b,w)$, $w=M^\top y$. Therefore,
\begin{equation}\tag{$\star$}
w=\sum_{i\in I(F)}\row_i(M)^\top y_i= \sum_{i\in I(F)\setminus I(F^+)}\row_i(M)^\top u_i+\sum_{i\in I(F^+)}\row_i(M)^\top y_i.
\end{equation}
Let $\bar{w}$ be as defined in \eqref{u-perturbation-eq}.
Then ($\star$) implies that $\bar{w}=\sum_{i\in I(F^+)}\row_i(M)^\top y_i$.
The result now follows by the equivalence of (b) and (c) of \Cref{u-perturbation}.
\end{proof}
Consider $M\in\Z^{m\times n}$, $b\in\Z^m$ and let $\cL\subset\R$.
We say that $w\in\cL^n$ is {\em $\cL$-bad} with respect to $Mx\leq b$, if
(i) $w$ is admissible for $(P\!:\!M,b,w)$ and 
(ii) $(D\!:\!M,b,w)$ has no optimal solution in $\cL^m$.
By definition, $Mx\leq b$ is TD in $\cL$ if and only if there is no $w \in \cL^n$ that is $\cL$-bad.
A vector $w\in\cL^n$ is {\em $\cL$-extremal} with respect to $Mx\leq b$, if
it is $\cL$-bad and among all $\cL$-bad vectors it maximizes the dimension of the set of optimal solutions of $(P\!:\!M,b,w)$.
We have the following relation between extremal weights and the tilt-constraints.
\begin{PR}\label{extremal-key}
Let $M\in\Z^{m\times n}, b\in\Z^m$ and let $\cL$ be a heavy set.
Suppose that $w$ is $\cL$-extremal with respect to $Mx\leq b$.
Let $F$ denote the set of optimal solutions of $(P\!:\!M,b,w)$ and let $F^+$ be a down-face of $F$.
Then the $(w,F,F^+)$-tilt constraint has no solution with all variables in $\cL$.
\end{PR}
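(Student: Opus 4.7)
The plan is to argue by contradiction. Suppose the $(w,F,F^+)$-tilt constraint has a solution $u\in\cL^I$, where $I:=I_{M,b}(F)\setminus I_{M,b}(F^+)$. Set $\bar w := w-\sum_{i\in I}u_i\row_i(M)\in\cL^n$ and $\bar\tau:=\tau-\sum_{i\in I}u_ib_i\in\cL$; the equivalence (a)$\Leftrightarrow$(c) of \Cref{u-perturbation} yields $F^+\subseteq\{x:\bar w^\top x=\bar\tau\}$, so $\bar w$ is constant $\bar\tau$ on $F^+$.

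The first step establishes that $\bar w$ is admissible for $(P\!:\!M,b,\bar w)$ and that its primal-optimal face $F_{\bar w}$ contains $F^+$. Using the ``moreover'' clause of \Cref{dual-opt-char}, pick a strictly complementary optimal dual $y^*$ for $(D\!:\!M,b,w)$, with $y^*_i>0$ on $I_{M,b}(F)$ and $y^*_j=0$ elsewhere. Define $\hat y$ by $\hat y_i := y^*_i - u_i$ for $i\in I$ and $\hat y_i := y^*_i$ otherwise; a direct computation gives $M^\top\hat y=\bar w$ and $\hat y$ is supported in $I_{M,b}(F^+)$. By an appropriate choice of $y^*$ within the optimal dual polytope of $w$, we arrange $\hat y\geq\0$, so $\hat y$ is dual-feasible for $\bar w$ and \Cref{primal-opt-char} yields $F^+\subseteq F_{\bar w}$; hence $\dim F_{\bar w}\geq\dim F+1$.

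Because $\bar w\in\cL^n$ is admissible with primal-optimal face strictly dominating $F$, the $\cL$-extremality of $w$ forbids $\bar w$ from being $\cL$-bad. Thus $(D\!:\!M,b,\bar w)$ has an optimal $\bar y\in\cL^m$. Complementary slackness (\Cref{dual-opt-char}) gives $\bar y_j=0$ for $j\notin I_{M,b}(F_{\bar w})$; since $I_{M,b}(F_{\bar w})\subseteq I_{M,b}(F^+)$ and $I\cap I_{M,b}(F^+)=\emptyset$, in particular $\bar y_j=0$ for every $j\in I$. Now set $y\in\cL^m$ by $y_i := \bar y_i + u_i$ for $i\in I$ and $y_i := \bar y_i$ otherwise; then $M^\top y = \bar w + \sum_{i\in I}u_i\row_i(M) = w$ and $y_j=0$ for every $j\notin I_{M,b}(F)$. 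By \Cref{affine-hull-dual}, $y\in\aff(Q)\cap\cL^m$ where $Q$ is the set of optimal dual solutions of $(D\!:\!M,b,w)$. Since $Q$ is nonempty convex and $\aff(Q)$ is a translate of a rational subspace, \Cref{density} upgrades $\aff(Q)\cap\cL^m\neq\emptyset$ to $Q\cap\cL^m\neq\emptyset$, contradicting the $\cL$-badness of $w$.

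The main obstacle is the first step: a priori, some $u_i$ could exceed the corresponding $y^*_i$, spoiling $\hat y\geq\0$. The resolution leans on the flexibility \Cref{dual-opt-char} supplies in choosing a strictly complementary $y^*$ inside the optimal dual polytope of $w$, together with the fact that the tilt equation's $I$-projection of that polytope must accommodate the prescribed $u$; this allows selecting $y^*$ whose $I$-coordinates dominate $u$ coordinate-wise, yielding the required nonnegativity. Once this is secured, the remainder is the density-based lifting of $y\in\aff(Q)\cap\cL^m$ to an actual $\cL$-optimal dual solution, which is the conceptually novel ingredient of the argument.
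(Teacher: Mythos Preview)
Your overall architecture---assume a solution $u\in\cL^I$, form $\bar w$, use extremality to obtain $\cL$-coefficients on $I_{M,b}(F^+)$, reassemble a vector in $\aff(Q)\cap\cL^m$, and invoke \Cref{density}---matches the paper's. The gap is exactly where you flag it: your first step does not go through.

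You need some optimal dual $y^*$ for $w$ with $y^*_i\geq u_i$ for every $i\in I$, and your justification (``the tilt equation's $I$-projection of that polytope must accommodate the prescribed $u$'') is not correct. \Cref{dual-satisfy-base} says the $I$-projection of the optimal dual polytope for $w$ \emph{lies on} the tilt hyperplane; it does not say the projection \emph{is} that hyperplane, nor that every $\cL$-point of the hyperplane is coordinatewise dominated by some point of the projection. Concretely, if $|I|=2$ and the projection is the single point $(1,1)$ on the tilt line $u_1+u_2=2$, the $\cL$-solution $u=(3,-1)$ admits no $y^*$ with $y^*_1\geq 3$. So $\hat y\geq\0$ cannot be arranged in general, and without it you have neither admissibility of $\bar w$ nor $F^+\subseteq F_{\bar w}$.

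The paper sidesteps this entirely by never asking $\bar w$ to be admissible. Instead it first proves a claim: the rows $\{\row_i(M):i\in I_{M,b}(F^+)\}$ form an $\cL$-GSC (take any $\Omega\in\cL^n$ in their \emph{conic} hull---such $\Omega$ is automatically admissible with $F^+\subseteq F_\Omega$ by \Cref{primal-opt-char}, hence extremality applies and yields an $\cL$-dual), and therefore an $\cL$-GSS by \Cref{GSC-GSS}. Then, using only part~(b) of \Cref{u-perturbation}, $\bar w$ lies in the \emph{span} of those rows, so the GSS property gives coefficients $z_i\in\cL$ (no sign condition). Gluing $z$ on $I_{M,b}(F^+)$ with $u$ on $I$ and $0$ elsewhere produces $\bar y\in\cL^m$ satisfying $M^\top\bar y=w$ and $\bar y_i=0$ for $i\notin I_{M,b}(F)$, i.e.\ $\bar y\in\aff(Q)$ by \Cref{affine-hull-dual}. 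From there your final density step is exactly right. The conceptual fix, then, is to replace your nonnegativity manoeuvre by the GSC$\to$GSS passage, which is precisely what absorbs the possible negativity of the $u_i$.
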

\begin{proof}
We write $I$ for $I_{M,b}$ and let $S:=\{\row_i(M) : i\in I(F^+)\}$.
\begin{claim}
The set $S$ forms a $\cL$-GSS.
\end{claim}
\begin{cproof}
Let $\Omega\in\cL^n$ be an arbitrary vector in the conic hull of $S$.
Denote by $F^{\Omega}$ the optimal face of $(P\!:\!M,b,\Omega)$.
By \Cref{primal-opt-char}, $F^+\subseteq F^\Omega$.
Since $\dim(F^\Omega)\geq\dim(F^+)>\dim(F)$ and since $w$ is extremal, $(D\!:\! M,b,\Omega)$ has an optimal solution $z\in\cL^m$.
By \eqref{CS-condition}, $z_i=0$ for all $i\notin I(F^\Omega)$.
As $F^+\subseteq F^\Omega$, $I(F^+)\supseteq I(F^\Omega)$, hence, $z_i=0$ for all $i\notin I(F^+)$.
Hence, 
\[
\Omega=M^\top z=\sum_{i\in I(F^+)}\row_i(M)z_i,
\]
and $\Omega$ is a conic combination of vectors in $S$ with coefficients in $\cL$.
Since $\Omega$ was arbitrary, $S$ forms an $\cL$-GSC.
The result then follows from \Cref{GSC-GSS}.
\end{cproof}
\noindent
Suppose for a contradiction that the $(w,F,F^+)$-tilt constraint has a solution with $u_i\in\cL$ for all $i\in I(F)\setminus I(F^+)$.
Define,
\begin{equation}\label{extremal-eq1}
\bar{w}^{\top}=w^{\top}-\sum_{i\in I(F)\setminus I(F^+)}u_i\row_i(M).
\end{equation}
By the equivalence between (b) and (c) of \Cref{u-perturbation}, $\bar{w}$ is in the span of $S$.
It then follows from the claim that there exists a vector $z$ where $z_i\in\cL$ for all $i\in I(F^+)$ for which,
\begin{equation}\label{extremal-eq2}
\bar{w}^{\top}=\sum_{i\in I(F^+)}z_i\row_i(M).
\end{equation}
Define, $\bar{y}$ where for every $i\in[m]$, 
\[
\bar{y}_i :=
\begin{cases}
z_i & \mbox{if $i\in I(F^+)$} \\
u_i & \mbox{if $i\in I(F)\setminus I(F^+)$} \\
0 & \mbox{otherwise.}
\end{cases}
\]
By \eqref{extremal-eq1} and \eqref{extremal-eq2} we have $M^\top\bar{y}=w$.
Since in addition, $\bar{y}_i=0$ for all $i\notin I(F)$, 
\Cref{affine-hull-dual} implies that $\bar{y}$ is in the affine hull of optimal solutions to $(D\!:\!M,b,w)$.
By construction, $\bar{y}\in\cL^m$ and \Cref{density} implies 
that there is an optimal solution to $(D\!:\!M,b,w)$ in $\cL^m$.
Hence, $w$ is not $\cL$-bad, a contradiction, as $w$ is $\cL$-extremal.
\end{proof}
\noindent We are now ready for the main proof in this section.
\begin{proof}[Proof of \Cref{main-char}]
Throughout this proof we write $I$ for $I_{M,b}$.
We first assume that $Mx\leq b$ is TD in $\cL$ and will show that both (i) and (ii) hold.
Consider first (i).
Let $\Omega\in\cL^n$ be a conic combination of the rows of $M^=$.
It follows from \Cref{primal-opt-char} that every feasible solution of $(P\!:\!M,b,\Omega)$ is an optimal solution.
Pick $x$ in the relative interior of $\{x \in \R^n : Mx\leq b\}$.
Since $Mx\leq b$ is TD in $\cL$ there exists an optimal solution $y\in\cL^m$ of $(D\!:\!M,b,\Omega)$.
By \eqref{CS-condition}, $y_i=0$ for all $i$ that does not correspond to a row of $M^=$.
Since $y\geq\0$ and $\Omega=M^\top y$, $\Omega$ is a conic combination of $M^=$ with coefficients in $\cL$.
As $\Omega$ was arbitrary the rows of $M^=$ forms a GSC in $\cL$.
Consider (ii).
Since $Mx\leq b$ is TD in $\cL$, there exists an optimal solution $y\in\cL^m$ of $(D\!:\!M,b,w)$.
Then by \Cref{dual-satisfy-base}, $u_i=y_i$ for all $i\in I(F)\setminus I(F^+)$ satisfies the $(w,F,F^+)$-tilt constraint.

Assume now that (i) and (ii) hold and suppose for a contradiction that $Mx\leq b$ is not TD in $\cL$.
Then there exists $w$ that is $\cL$-extremal. 
Denote by $F$ the optimal face for $(P\!:\!M,b,w)$.
Consider first the case where $F=\{x \in \R^n : Mx\leq b\}$.
\Cref{primal-opt-char} implies that $w$ is a conic combinations of the rows of $M^=$.
Since the rows of $M^=$ form an $\cL$-GSC, there exists $y\in\cL^m_+$ with $w=M^\top y$ 
where $y_i=0$ for every $i$ that does not correspond to a row of $M^=$.
It follows from \eqref{CS-condition} that $y$ is an optimal solution of $(D\!:\!M,b,w)$,
a contradiction as $w$ is $\cL$-bad.
Thus, we may assume $F$ is a proper face of $\{x \in \R^n : Mx\leq b\}$.
Therefore, there exists a down-face $F^+$ of $F$.
By (ii) we have a solution of the $(w,F,F^+)$-tilt constraint with all variables in $\cL$.
However, this contradicts \Cref{extremal-key}.
\end{proof}
\noindent
We leave it as an exercise to check that the argument in the proof of \Cref{main-char} also shows that the condition (ii) in that theorem can be replaced by the following condition:
{\em ``For every admissible $w\in\cL^n$, denote by $F$ the set of optimal solutions of $(P\!:\!M,b,w)$.
For {\em some} down-face $F^+$ of $F$, the $(w,F,F^+)$-tilt constraint has a solution with variables in~$\cL$."}

\section*{Acknowledgment}
The authors are grateful for numerous discussions with Mahtab Alghasi.



\begin{thebibliography}{10}
\renewcommand{\baselinestretch}{0.9}
\small{

\bibitem{abdi-dyadic}
A. Abdi, G. Cornu{\'e}jols, B. Guenin, and L. Tun{\c{c}}el.
\newblock Clean clutters and dyadic fractional packings.
\newblock SIAM J. Discrete Math., 36:1012--1037, 2022.

\bibitem{abdi23b}
A. Abdi, G. Cornu{\'e}jols, B. Guenin, and L. Tun{\c{c}}el.
\newblock Total dual dyadicness and dyadic generating sets.
\newblock Math. Program., 206(1-2):125--143, 2024.

\bibitem{abdi23}
A. Abdi, G. Cornu{\'e}jols, B. Guenin, and L. Tun{\c{c}}el.
\newblock Dyadic linear programming and extensions. To appear: 
\newblock Math. Program., 2025.

\bibitem{abdi22d}
A. Abdi, G. Cornu\'{e}jols, and Z. Palion.
\newblock On dyadic fractional packings of {$T$}-joins.
\newblock SIAM J. Discrete Math., 36(3):2445--2451, 2022.

\bibitem{AroraBarak}
S. Arora and B. Barak.
\newblock Computational complexity, A modern approach.
\newblock Cambridge University Press, Cambridge, 2009.

\bibitem{CCZ2014}
M. Conforti, G. Cornu\'ejols, and G. Zambelli.
\newblock Integer programming, volume 271 of Graduate Texts in Mathematics.
\newblock Springer, Cham, 2014.

\bibitem{cornuejols2001}
G. Cornu\'ejols.
\newblock Combinatorial optimization, Packing and covering, volume~74 of 
\newblock CBMS-NSF Regional Conference Series in Applied Mathematics.
\newblock Society for Industrial and Applied Mathematics (SIAM), Philadelphia, PA, 2001.

\bibitem{cornuejols00}
G. Cornu{\'e}jols, B. Guenin, and F. Margot.
\newblock The packing property.
\newblock Mathematical Programming, 89(1):113--126, 2000.

\bibitem{Edmonds70}
J. Edmonds and DR. Fulkerson.
\newblock Bottleneck extrema.
\newblock J. Combinatorial Theory, 8:299--306, 1970.

\bibitem{Edmonds77}
J. Edmonds and R. Giles.
\newblock A min-max relation for submodular functions on graphs.
\newblock In Studies in integer programming (Proc. Workshop, Bonn, 1975), pages 185--204. Ann. of Discrete Math., Vol. 1, 1977.

\bibitem{geelen02}
J. F. Geelen and B. Guenin.
\newblock Packing odd circuits in Eulerian graphs.
\newblock J. Combin. Theory Ser. B, 86(2):280--295, 2002.

\bibitem{GerardsSebo1987}
A. M. H. Gerards and A. Seb\H{o}.
\newblock Total dual integrality implies local strong unimodularity.
\newblock Math. Programming, 38(1):69--73, 1987.

\bibitem{GP79}
F. R. Giles and W. R. Pulleyblank.
\newblock Total dual integrality and integer polyhedra.
\newblock Linear Algebra and its Applications, 25:191 -- 196, 1979.

\bibitem{gueninH2024}
B.~Guenin and S.~Hwang.
\newblock Dyadic packing of dijoins.
\newblock To appear: SIAM J. Discrete Math, 2025.

\bibitem{guenin2001}
B. Guenin.
\newblock A characterization of weakly bipartite graphs.
\newblock J. Combin. Theory Ser. B, 83(1):112--168, 2001.

\bibitem{Lehman64}
A. Lehman.
\newblock A solution of the {S}hannon switching game.
\newblock J. Soc. Indust. Appl. Math., 12:687--725, 1964.

\bibitem{lovasz75}
L.~Lov\'asz.
\newblock {$2$}-matchings and {$2$}-covers of hypergraphs.
\newblock Acta Math. Acad. Sci. Hungar., 26(3-4):433--444, 1975.

\bibitem{lucchesi78}
C. L. Lucchesi and DH. Younger.
\newblock A minimax theorem for directed graphs.
\newblock J. London Math. Soc. (2), 17(3):369--374, 1978.

\bibitem{palion21}
Z. Palion.
\newblock On dyadic fractional packings of {T}-joins.
\newblock Undergraduate thesis, London School of Economics and Political Science, 2021.

\bibitem{schrijver86}
A. Schrijver.
\newblock Theory of linear and integer programming.
\newblock Wiley-Interscience Series in Discrete Mathematics. John Wiley \& Sons, Ltd., Chichester, 1986.
\newblock A Wiley-Interscience Publication.

\bibitem{schrijver03c}
A. Schrijver.
\newblock Combinatorial optimization. Polyhedra and efficiency. Vol. C, volume 24,C of Algorithms and Combinatorics.
\newblock Springer-Verlag, Berlin, 2003.
\newblock Disjoint paths, hypergraphs, Chapters 70--83.

\bibitem{seymour1987}
P. D. Seymour.
\newblock The forbidden minors of binary matrices.
\newblock J. London Math. Society}, 2(12), 356--360, 1976.

\end{thebibliography}
\end{document}